\documentclass{article}

\usepackage{graphicx} 
\usepackage[utf8]{inputenc}
\usepackage[T1]{fontenc}
\usepackage[margin=1in]{geometry}
\usepackage{amsmath,amsfonts,amssymb,amsthm,mathrsfs}
\usepackage{bm}
\usepackage{microtype}

\usepackage{graphicx}
\usepackage{float}
\usepackage[table]{xcolor}
\usepackage[para,online,flushleft]{threeparttable}
\usepackage{booktabs}
\usepackage{enumitem}
\usepackage{algorithm}
\usepackage{algorithmic}

\usepackage{placeins}
\usepackage{bbding}
\usepackage{makecell}
\usepackage{hhline}
\usepackage[sort]{natbib}

\usepackage[
    colorlinks=true,
    linkcolor=blue,
    citecolor=blue,
    urlcolor=blue
]{hyperref}

\theoremstyle{plain}
\newtheorem{theorem}{Theorem}[section]
\newtheorem{lemma}[theorem]{Lemma}

\newtheorem{corollary}[theorem]{Corollary}

\theoremstyle{definition}
\newtheorem{definition}[theorem]{Definition}

\usepackage{amsmath}\allowdisplaybreaks
\usepackage{amsfonts,bm}
\usepackage{amssymb}

\def\eqref#1{equation~(\ref{#1})}
\def\1{\bf{1}}

\newcommand{\Norm}[1]{\left\| #1 \right\|}

\def\inner#1#2{\left\langle #1, #2 \right\rangle}

\def\vzero{{\bf{0}}}

\def\vg{{\bf{g}}}
\def\vh{{\bf{h}}}

\def\vk{{\bf{k}}}

\def\vu{{\bf{u}}}
\def\vv{{\bf{v}}}

\def\vx{{\bf{x}}}
\def\vy{{\bf{y}}}

\def\fA{{\mathcal{A}}}

\def\fD{{\mathcal{D}}}

\def\fF{{\mathcal{F}}}

\def\fO{{\mathcal{O}}}
\def\fP{{\mathcal{P}}}

\def\fZ{{\mathcal{Z}}}

\def\BE{{\mathbb{E}}}
\def\BN{{\mathbb{N}}}

\def\BR{{\mathbb{R}}}

\def\mI {{\bf I}}

\def\mU {{\bf U}}

\usepackage{amsthm}
\theoremstyle{plain}
\newtheorem{remark}{Remark}

\makeatletter
\def\Ddots{\mathinner{\mkern1mu\raise\p@
\vbox{\kern7\p@\hbox{.}}\mkern2mu
\raise4\p@\hbox{.}\mkern2mu\raise7\p@\hbox{.}\mkern1mu}}
\makeatother

\makeatletter
\newcommand*{\rom}[1]{\expandafter\@slowromancap\romannumeral #1@}
\makeatother

\def\vxi{{\bm\xi}}

\title{On the Complexity of Finding Stationary Points in Stochastic $(L_0, L_1)$-Smooth Nonconvex Optimization}

\author{
Yanpeng Liang \qquad\qquad\qquad Luo Luo
}

\date{}

\begin{document}

\maketitle

\begin{abstract}%
We study the stochastic first-order oracle (SFO) complexity of finding $\epsilon$-stationary points of $(L_0,L_1)$-smooth nonconvex functions. 
Under bounded-variance noise, existing analyses of normalized and clipped stochastic gradient descent (SGD) establish optimal dependence on $\epsilon$ and $L_0$, 
but the jointly optimal dependence on $L_0$, $L_1$, and $\epsilon$ remains unclear. 
We resolve this question for an unbiased SFO whose gradient noise has a $p$th central moment bounded by $\sigma^p$ for some $\sigma\geq 0$, where $p\in(1,2]$.
Specifically, we establish an SFO lower bound of
$\Omega\big(\big(\sigma^{\frac{p}{p-1}}\epsilon^{-\frac{p}{p-1}}+1\big)
(L_0\Delta\epsilon^{-2}+L_1\Delta\epsilon^{-1})\big)$,
where~$\Delta>0$ is the initial optimality gap.
This bound identifies an unavoidable contribution proportional to  $L_1\epsilon^{-1}\Delta$. 
We also prove that mini-batch (smoothed) normalized SGD can attain a matching upper bound, thereby establishing the optimal SFO complexity.
\end{abstract}

\section{Introduction}

Empirical observations from language model training suggest that the local smoothness parameter of the objective increases with the gradient norm \citep{zhang2020gradient,zhang2020improved}.
This motivates us to study the nonconvex optimization problem 
\begin{equation}
    \min_{\vx\in\BR^d}
    f(\vx)
    %:=\BE_{\vxi}[F(\vx;\vxi)],
    \label{eq:problem}
\end{equation}
where $f:\BR^d\to\BR$ is $(L_0,L_1)$-smooth, i.e., there exist $L_0,L_1\geq 0$ such that 
\begin{equation}\label{eq:l0l1-intro}
    \Norm{\nabla f(\vx)-\nabla f(\vy)}
    \le
    \left(
        L_0+L_1\Norm{\nabla f(\vx)}
    \right)
    \Norm{\vx-\vy}
\end{equation}
for all $\vx,\vy\in\BR^d$ with $\Norm{\vx-\vy}\le 1/L_1$, where we define $1/L_1=+\infty$ if $L_1=0$. 
Different with the standard smooth condition, the term $L_1\Norm{\nabla f(\vx)}$ on the right-hand side of inequality \eqref{eq:l0l1-intro} indicate the gradient of the objective is possibly non-Lipschitz continuous which introduces additional challenges.
In recent years, numerous works have studied theory and algorithms for optimization problem under the $(L_0, L_1)$-smooth condition \citep{chen2023generalized,vankov2024optimizing,li2024problem,li2023convergence,gaash2026convergence,gorbunov2025methods,koloskova2023revisiting,chezhegov2025convergence,tovmasyan2025revisiting,crawshaw2025complexity,lobanov2024linear,tyurin2024toward,reisizadeh2025variance,xie2024trust,zhang2020improved,pmlr-v306-yu26s,khirirat2024error,jiang2025decentralized,borodich2026nesterov,luo2026decentralized,hubler2024gradient}. 
However, existing results only establish the optimal dependence on $L_0$ but the optimality in terms of $L_1$, which is specific to this class of problems, remains unclear.

We focus on the complexity of finding an $\epsilon$-stationary point of problem (\ref{eq:problem}). 
We consider the complexity bound in the computation model that algorithms can access the information of the objective function through a stochastic first-order oracle (SFO) \citep{nemirovskij1983problem,arjevani2023lower,zhang2020adaptive}.
Specifically, for given point $\vx\in\BR^d$, the SFO oracle can draw~$Z\sim\fZ$ on distribution $\fZ$ and return the gradient estimate $\vg(\vx;Z)$ satisfying 
\begin{equation}\label{eq:oracle-condition}
\BE_Z[\vg(\vx;Z)]
=\nabla f(\vx)
\qquad
\text{and}
\qquad
\BE_Z\left[\Norm{\vg(\vx;Z)-\nabla f(\vx)}^p\right] \leq \sigma^p,
\end{equation}
where $\sigma\geq 0$ and $p\in(1,2]$.
Note that condition (\ref{eq:oracle-condition}) only require the gradient estimate to has $p$th bounded central moment ($p$-BCM), which is more general than the bounded variance assumption and allows heavy-tailed gradient noise that is widely observed in machine learning applications \citep{battash2024revisiting,ahn2023linear,garg2021proximal,cayci2023provably,zhu2024robust,gurbuzbalaban2025heavy,gorbunov2020stochastic,gorbunov2022clipped,sadiev2023high,gorbunov2024high,zhang2020adaptive,nguyen2023high,liu2023stochastic,hubler2024gradient,liunonconvex2025,sun2024gradient,he2025complexity,liu2025stochastic,wu2026optimal,chen2026stability,yu2025decentralized,hubler2024parameter,wang2026near}.
We~also impose the standard assumption of bounded initial optimality gap, i.e., for the algorithm with initial point~$\vx_0\in\BR^d$, there exists some $\Delta>0$ such that
\begin{align}\label{eq:optimal-gap}
    f(\vx_0) - \inf_{\vx\in\BR^d} f(\vx) \leq \Delta.
\end{align}

The existing studies on the lower complexity bound for $(L_0, L_1)$-smooth nonconvex optimization consider several specific algorithm classes \citep{zhang2020adaptive,hubler2024gradient,crawshaw2025complexity}.
For example, \citet{zhang2020gradient} showed that gradient descent (GD) methods with fixed stepsize cannot avoid the the iteration complexity of $\Omega(L_1\Delta\epsilon^{-2})$;
\citet{hubler2024parameter} showed that normalized gradient descent with momentum (NGD-M) and exponential decaying stepsize cannot achieve
a parameter-agnostic convergence without an exponential dependence on $L_1$;
\citet{crawshaw2025complexity} studied lower bounds 
for AdaGrad \citep{duchi2011adaptive} and its variants \citep{li2019convergence,wang2023convergence}.
However, the lower complexity bound of general first-order methods for 
$(L_0,L_1)$-smooth optimization remain unexplored, even in the deterministic~setting.

In this paper, we provide a hard instance $(f, \vg)$ satisfying conditions (\ref{eq:l0l1-intro}), (\ref{eq:oracle-condition}), and (\ref{eq:optimal-gap}) to show that every randomized algorithm requires at least
\begin{align}\label{eq:main-lower-bound}
    \Omega\bigg(\bigg(\frac{\sigma^{\frac{p}{p-1}}}{\epsilon^{\frac{p}{p-1}}}+1\bigg)\bigg(\frac{L_0\Delta}{\epsilon^2}+\frac{L_1\Delta}{\epsilon}\bigg)\bigg)
\end{align}
SFO calls to find a point $\hat\vx\in\BR^d$ such that $\BE[\Norm{\nabla f(\hat\vx)}]\leq \epsilon$. 
Here, the term $L_1\Delta\epsilon^{-1}$ captures the additional difficulty of the $(L_0, L_1)$-smooth optimization compared with the standard smooth setting.
In the case of $L_1=0$, the result of (\ref{eq:main-lower-bound}) recovers the tight lower complexity bound for standard smooth optimization \citep{zhang2020adaptive}.
Our hard instance for the lower bound follows the structure of probabilistic zero-chain function \citep{arjevani2023lower} to guarantee that each iteration with an SFO call can only reveal at most one relevant coordinate with low probability.
Different with the standard smooth case, our construction introduce the components of piecewise component function and hyperbolic function to characterize the specific property of the~$(L_0, L_1)$-smoothness, i.e., the curvature decreases as the gradient norm decreases, which results the dependence on $L_1\Delta\epsilon^{-1}$ in lower bound (\ref{eq:main-lower-bound}).
Furthermore, we provide sharp analysis to show that mini-batch (smoothed) normalized SGD can exactly attain a matching upper bound,  
verifying the tightness of our results.

\section{Preliminaries}\label{sec:Preliminaries}

In this section, we introduce notations and problem setting for the lower bound analysis.

\subsection{Notations}

For given vector $\vx=[x_1,\dots,x_d]\in\BR^{d}$, we denote $\mathrm{supp}(\vx):=\{i: x_i \ne 0\}$.
For given $\alpha\in[0,1)$ we define the progress for vector $\vx\in\BR^d$ as  $\operatorname{prog}_{\alpha}(\vx):=\max\left(\{0\}\cup\{i\in[d]:|x_i|>\alpha\}\right)$, where we let~$x_0 \equiv 1$ for convention.
For $d\geq T$, we let $\operatorname{St}(d,T):=\left\{\mU\in\BR^{d\times T}:\mU^\top\mU=\mI_T\right\}$.
For a twice continuously differentiable mapping
$\Phi:\BR^d\to\BR^d$, we denote its Fr\'echet derivative and second Fr\'echet derivative as $D\Phi:\BR^d\to\BR^d$ and $D^2\Phi:\BR^{d}\times\BR^d\to\BR^d$, respectively, and the corresponding induced operator norms at $\vx\in\BR^d$ are defined as 
\begin{align*}
\Norm{D\Phi(\vx)}:=\sup_{\Norm{\vh}=1}\Norm{D\Phi(\vx)[\vh]}
\qquad\text{and}\qquad
\Norm{D^2\Phi(\vx)}:=\sup_{\Norm{\vh}=\Norm{\vk}=1} \Norm{        D^2\Phi(\vx)[\vh,\vk] }.    
\end{align*}

\subsection{Problem Setting}

We focus on finding an $\epsilon$-stationary points of $(L_0, L_1)$-smooth function by unbiased stochastic gradient evaluation. 
Our problem setting follows the setup of \citet[Section 2]{arjevani2023lower} but relax the bounded variance assumption to $p$-BCM condition, which is formalized as follows.

\paragraph{The function class}

We study the complexity bound for finding $\epsilon$-stationary points of functions in the functions class
\begin{align*}\label{eq:function-class}
\mathcal{F}(L_0,L_1,\Delta) 
= \Big\{ & f:\BR^d\to\BR: f(\vzero)- \inf_{\vx\in\BR^d} f(\vx)\leq\Delta,  \\
&\Norm{\nabla f(\vx)-\nabla f(\vy)}  \le  \left( L_0+L_1\Norm{\nabla f(\vx)} \right) \Norm{\vx-\vy}~~\text{for all}~~\vx,\vy\in\BR^d \text{ with }\Norm{\vx-\vy}\le\frac{1}{L_1} \Big\}.
\end{align*}

\paragraph{The oracle class}
We consider the stochastic first-order oracle (SFO) from the oracle class $\fO(\sigma,p)$, where each oracle ${\mathsf O}_f\in\fO(\sigma,p)$ consists of a distribution $\fD$ on a measurable space~$\fZ$ and a measurable mapping
\mbox{${\mathsf O}_f(\vx;Z)=\vg(\vx;Z)\in\BR^d$}
such that for all $f\in\fF$ and $\vx\in\BR^d$, if $Z\sim\fD$, then it holds $\BE_Z[\vg(\vx;Z)]=\nabla f(\vx)$ 
and
$\BE_Z\left[\Norm{\vg(\vx;Z)-\nabla f(\vx)}^p\right] \leq \sigma^p$ with $p\in(1,2]$.

\paragraph{The algorithm class}
We focus on the randomized algorithm class $\fA_{\mathrm {rand}}$, where each~$\mathsf A\in\fA_{\mathrm {rand}}$ consists of a distribution $\fP_r$ on a measurable space $\mathcal{R}$ of measurable mappings
$\{\mathsf{A}^{(t)}\}_{t\in\BN}$ such that each~$\mathsf{A}^{(t)}$ takes in the first $t-1$ responses and the random seed $r\in\BR$ to produce the $t$th query.
We let~$\big\{\vx_{\mathsf{A}[{\mathsf O}_f]}^{(t)}\big\}_{t\in\BN}$
denote the random sequence of queries resulting from the algorithm
$\mathsf{A}$ with oracle~${\mathsf O}_f$, defined recursively as
\begin{equation}
    \vx_{\mathsf{A}[{\mathsf O}_f]}^{(t)}
    =
    \mathsf{A}^{(t)}
    \Big(
        r,\,
        {\mathsf O}_f
        \big(
            \vx_{\mathsf{A}[{\mathsf O}_f]}^{(1)};
            \vxi^{(1)}
        \big),
        \ldots,
        {\mathsf O}_f
        \big(
            \vx_{\mathsf{A}[{\mathsf O}_f]}^{(t-1)};
            \vxi^{(t-1)}
        \big)
    \Big).
\label{eq:algorithm-protocol}
\end{equation}

\paragraph{Complexity measures}
Let $\mathcal{P}[\mathcal{F}(L_0,L_1,\Delta)]$ be the set of all distributions over the function class~$\mathcal{F}(L_0,L_1,\Delta)$.
We consider the distributional complexity in the $p$-BCM setting as
\begin{equation}
\begin{aligned}
      m_{\epsilon}^{\mathrm{rand}}
    \left(L_0,L_1,\Delta,\sigma,p
    \right) :=  \sup_{{\mathsf O}\in\mathcal{O}(\sigma, p)}\;
    \sup_{P_f\in\mathcal{P}[\mathcal{F}(L_0,L_1,\Delta)]}\;
    \inf_{\mathsf{A}\in\mathcal{A}_{\mathrm{rand}}}\;
    \inf\left\{ T\in\BN:
             \BE\left[\Big\|\nabla f\big(\vx_{\mathsf{A}[\mathsf{O}_f]}^{(T)}\big)\Big\|\right]
           \le \epsilon
        \right\}.
\end{aligned}
\label{eq:distributional-complexity}
\end{equation}
Note that if we can prove that 
$m_{\epsilon}^{\mathrm{rand}}\left(L_0,L_1,\Delta,\sigma,p\right)>T$, 
then there exists oracle ${\mathsf O}\in\fO(\sigma,p)$ such that for every ${\mathsf A}\in\fA_{\mathrm rand}$ there exists a function $f\in \fF(L_0, L_1, \Delta)$ such that $\BE\big[\big\|\nabla f\big(\vx_{\mathsf{A}[\mathsf{O}_f]}^{(T)}\big)\big\|\big]>\epsilon$.

\section{The Construction for $(L_0,L_1)$-Smooth Function}\label{sec:L0-L1-smooth}

We first introduce the piecewise polynomial function $\psi$ and the hyperbolic function $\phi$ as follows
\begin{equation}
    \psi(t):=S(2t-1)   \qquad \text{and} \qquad   \phi(t):=1+\tanh(t),
    \label{eq:gate-functions}
\end{equation}
respectively, where $S:\BR\to\BR$ is defined as
\begin{equation}
    S(u)= \begin{cases}
        0, & u\le 0,\\
        10u^3-15u^4+6u^5, & u\in(0,1),\\
        1, & u\ge 1.
    \end{cases}
    \label{eq:gate-S}
\end{equation}

We can verify functions $\psi$ and $\phi$ have the following properties.

\begin{lemma}
\label{lem:gate-properties}
The functions $\psi$ and $\phi$ defined in equation 
\eqref{eq:gate-functions} satisfy the following
properties. 
\begin{enumerate}[label=(\alph*),leftmargin=0.65cm]
    \item The function $\psi$ satisfies $\psi(t)=0$ for all $t \le 1/2$, and 
    $\psi(t)=1$ for all $t\ge1$.
    \item The function $\psi$ satisfies
    $0\le\psi(t)\le1$, $0\le\psi'(t)\le4$, and $|\psi''(t)|\le24$ for all $t\in\BR$.
    \item The function $\phi$ satisfies
    $0<\phi(t)<2$, $\phi'(t)=\operatorname{sech}^2(t)$, and    $|\phi''(t)|\le2\phi'(t)$ for all $t\in\BR$.
    \item The derivative $\phi'$ is even and holds
    $\phi'(t)>2/5$ for all $t\in[-1, 1]$.
\end{enumerate}
\end{lemma}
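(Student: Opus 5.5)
The plan is to reduce everything to elementary calculus on the quintic $S$ and on $\tanh$. For $\psi$, the chain rule gives
\[
\psi'(t)=2S'(2t-1),\qquad \psi''(t)=4S''(2t-1).
\]
So it suffices to bound $S$, $S'$, $S''$.

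On $(0,1)$ we compute
\[
S'(u)=30u^2-60u^3+30u^4=30u^2(1-u)^2,\qquad S''(u)=60u(1-u)(1-2u).
\]
Both vanish at $u=0$ and $u=1$, and $S(0)=0$, $S(1)=10-15+6=1$. Hence $S$ is $C^2$ across the breakpoints and the piecewise formula is valid globally. We take $S'=S''=0$ outside $(0,1)$.

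\emph{Part (a).} If $t\le 1/2$ then $u=2t-1\le 0$, so $\psi(t)=0$. If $t\ge 1$ then $u\ge 1$, so $\psi(t)=1$.

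\emph{Part (b).} Since $S'\ge 0$, $S$ is nondecreasing from $0$ to $1$, so $0\le\psi\le 1$. Next, $u(1-u)\le 1/4$ on $[0,1]$ gives $S'\le 30/16$, hence $0\le\psi'\le 15/4\le 4$. For $S''$, set $g(u)=u(1-u)(1-2u)$. Substituting $v=u-1/2$ gives
\[
g=-2v\left(\tfrac14-v^2\right),
\]
whose maximum modulus on $|v|\le 1/2$ occurs at $v^2=1/12$. This yields $|g|\le \frac{1}{6\sqrt3}$, so $|S''|\le \frac{10}{\sqrt3}<6$ and therefore $|\psi''|\le 24$. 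This optimization of the cubic $g$ is the only step requiring real care, but it is a one-variable maximization. As a fallback, the cruder bound $|g|\le \frac14\cdot 1$ would only give $|\psi''|\le 60$, so the sharper computation is genuinely needed.

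\emph{Part (c).} Since $\tanh\in(-1,1)$, we get $0<\phi<2$. Also $\phi'=\tanh'=\operatorname{sech}^2$. Differentiating once more,
\[
\phi''=-2\operatorname{sech}^2(t)\tanh(t),
\]
so $|\phi''|=2\phi'|\tanh|\le 2\phi'$.

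\emph{Part (d).} $\operatorname{sech}^2$ is even because $\cosh$ is even. On $[-1,1]$, $\cosh$ is maximized at the endpoints, so $\phi'(t)\ge \operatorname{sech}^2(1)=1/\cosh^2(1)$. Numerically $\cosh 1\approx 1.5431$, giving $\cosh^2 1\approx 2.381<2.5$. To make this rigorous, bound $\cosh 1=(e+e^{-1})/2<(2.72+0.37)/2=1.545$, and $1.545^2<2.39<5/2$. Hence $\phi'(t)\ge 1/\cosh^2(1)>2/5$.
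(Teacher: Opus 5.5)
Your proof is correct and follows essentially the same route as the paper: you compute $S'$ and $S''$ directly, use $u(1-u)\le 1/4$ to bound $\psi'$, maximize the cubic $u(1-u)(1-2u)$ at its critical points (your substitution $v=u-\tfrac12$ gives the same value $\sqrt3/18$), and use $\operatorname{sech}^2(1)>2/5$ for part (d). Your explicit numerical estimate $\cosh 1<1.545$ and your check that $S$ is $C^2$ at the breakpoints are small additions that the paper leaves implicit.
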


\begin{remark}
    We illustrate the derivatives of $\psi$ and $\phi$ in Figure \ref{figure:component}. Note that the derivative $\psi'(t)$ is non-zero only for $t\in(1/2, 1)$, which is useful to construct the zero-chain. 
    Additionally, the manganite of $\phi''(t)$ always can be bounded by $2\phi'(t)$, corresponding to the $(L_0, L_1)$-smoothness.
\end{remark}

We then construct the function $\bar f_T:\BR^T\to\BR$ as follows
\begin{equation}
\bar f_T(\vy) := \sum_{i=1}^{T}A_i(\vy),
~~~~\text{where}~~
A_i(\vy):=
\begin{cases}
-\phi(y_1), & \text{if}~i=1, \\[0.1cm]
\psi(-y_{i-1})\phi(-y_i)
-
\psi(y_{i-1})\phi(y_i), & \text{otherwise}.    
\end{cases}
\label{eq:hard-zero-chain}
\end{equation}
Lemma \ref{lem:gate-properties} indicates the following properties for function $\bar f_T$.

\begin{lemma}
\label{lem:hard-chain-properties}
The function $\bar f_T$ defined in \eqref{eq:hard-zero-chain} satisfies the following properties.
\begin{enumerate}[label=(\alph*),leftmargin=0.65cm]
    \item 
    The optimality gap holds $\bar f_T(\vzero) - \inf_{\vy\in\BR^T}\bar f_T(\vy) =2T-1$.
    \item %\textbf{Uniform derivative bounds.}
    For all $\vy\in\BR^T$, it holds 
    $\Norm{\nabla\bar f_T(\vy)} \le9\sqrt T$ 
    and $\Norm{\nabla^2\bar f_T(\vy)} \le 58$.
    \item  %    \textbf{Gradient certificate.}
    For given $\vy=[y_1,\dots,y_T]\in\BR^T$, if there exists $i\in[T]$ such that $|y_i|\le1$, then $\Norm{\nabla\bar f_T(\vy)} \ge\frac25$.    
    \item  For all $\vy\in\BR^T$, it holds $\Norm{\nabla^2 \bar f_T(\vy)} \le 160\Norm{\nabla\bar f_T(\vy)}$.
    \item \label{item:zero-chain-progress}  For all $\vy\in\BR^T$, it holds $\operatorname{prog}_0(\nabla\bar f_T(\vy))\le\operatorname{prog}_{1/4}(\vy)+1$.
\end{enumerate}
\end{lemma}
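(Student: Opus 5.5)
The plan is to write out the partial derivatives of $\bar f_T$ explicitly and read off every property from their sign structure. Each $y_j$ appears only in $A_j$ and $A_{j+1}$, so
\begin{align*}
\partial_j \bar f_T(\vy) = -\Big[\psi(-y_{j-1})\phi'(-y_j)+\psi(y_{j-1})\phi'(y_j)\Big] - \Big[\psi'(-y_j)\phi(-y_{j+1})+\psi'(y_j)\phi(y_{j+1})\Big],
\end{align*}
with the conventions that the first bracket is $\phi'(y_1)$ when $j=1$ and the second bracket is absent when $j=T$. By Lemma~\ref{lem:gate-properties}, every term inside both brackets is nonnegative. Hence $|\partial_j\bar f_T|$ equals the sum of these terms, and no cancellation can occur. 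This observation drives parts (c) and (d).

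\emph{Part (a).} First, $\psi(0)=0$, so $\bar f_T(\vzero)=-\phi(0)=-1$. Next, $\psi(-y)$ and $\psi(y)$ are never simultaneously positive, since their supports $(1/2,\infty)$ and $(-\infty,-1/2)$ are disjoint. Therefore $A_i\ge -\psi(y_{i-1})\phi(y_i)>-2$, and also $A_1>-2$, which gives $\bar f_T>-2T$. Letting all $y_i\to+\infty$ drives $\bar f_T\to-2T$, so the gap is exactly $2T-1$.

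\emph{Part (b).} Again only one of $\psi(\pm y_{j-1})$ is nonzero. Using $\phi'\le 1$, the first bracket is at most $1$. Using $\psi'\le4$ and $\phi<2$, the second is at most $8$. So $|\partial_j\bar f_T|\le 9$ and $\Norm{\nabla\bar f_T}\le 9\sqrt T$. The Hessian is tridiagonal, and I bound it by Gershgorin row sums. The diagonal entries collect $\psi\phi''$ terms (at most $2$, via $|\phi''|\le2\phi'\le 2$) and $\psi''\phi$ terms (at most $48$). Each of the two off-diagonal entries is a $\psi'\phi'$ term, at most $4$. Summing gives $2+48+4+4=58$.

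\emph{Part (c).} Let $j$ be the smallest index with $|y_j|\le1$. If $j>1$, then $|y_{j-1}|>1$, so $\psi(\pm y_{j-1})=1$ for the appropriate sign. The first bracket therefore equals $\phi'(\pm y_j)>2/5$ by Lemma~\ref{lem:gate-properties}(d). Since the second bracket is nonnegative, $|\partial_j\bar f_T|\ge 2/5$.

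\emph{Part (d).} This is the main obstacle, because $|\psi''|$ cannot be bounded by a multiple of $\psi'$ near $t=1/2$ (the ratio $S''/S'$ behaves like $2/u$). I avoid the pointwise comparison with a case split. If some $|y_i|\le 1$, then parts (b) and (c) give $\Norm{\nabla^2\bar f_T}\le 58\le 145\cdot\frac25\le 160\Norm{\nabla\bar f_T}$. Otherwise every $|y_i|>1$, so all $\psi'(\pm y_i)$ and $\psi''(\pm y_i)$ vanish and each $\psi(\pm y_i)\in\{0,1\}$. The Hessian is then diagonal with entries $-\psi(\pm y_{j-1})\phi''(\pm y_j)$. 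Each has magnitude at most $2\psi(\pm y_{j-1})\phi'(\pm y_j)=2|\partial_j\bar f_T|$, so $\Norm{\nabla^2\bar f_T}\le 2\Norm{\nabla \bar f_T}$.

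\emph{Part (e).} Let $k=\operatorname{prog}_{1/4}(\vy)$. For every $j>k$ we have $|y_j|\le1/4<1/2$, so $\psi(\pm y_j)=\psi'(\pm y_j)=0$ by Lemma~\ref{lem:gate-properties}(a). Now take $j\ge k+2$ in the formula for $\partial_j\bar f_T$. The first bracket vanishes because $j-1>k$, and the second vanishes because $j>k$. Hence $\operatorname{prog}_0(\nabla\bar f_T(\vy))\le k+1$.
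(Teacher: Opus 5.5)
Your proposal is correct and follows essentially the same route as the paper: the explicit coordinate formula with nonnegative terms, the row-sum bound $2+48+4+4=58$ for the tridiagonal Hessian, and the smallest index with $|y_j|\le 1$ for (c). For (d) you use the same case split as the paper (some $|y_i|\le 1$ versus all $|y_i|>1$, where the Hessian becomes diagonal), and your remark that $\psi''/\psi'$ is unbounded near $t=1/2$ nicely explains why that split is needed.

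The only step you leave implicit is $j=1$ in (c), and it is immediate from your convention that the first bracket equals $\phi'(y_1)$.
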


\begin{remark}\label{remark:fT}
Lemma \ref{lem:hard-chain-properties}(d) implies that  function $\bar f_T$ is $(0, 320)$-smooth, which is essential different with the analysis for standard smooth case \citep{carmon2020lower}.
In fact, the ratio of the gradient norm to the Hessian norm in the classical hard instance for standard smooth optimization is unbounded, so that it cannot characterize the specific hardness of $(L_0, L_1)$-smoothness.
Please refer to discussion in Appendix~\ref{appendix:disscusion}.
\end{remark}

\begin{figure}[t]
\centering
\begin{tabular}{ccc}
\includegraphics[scale=0.36]{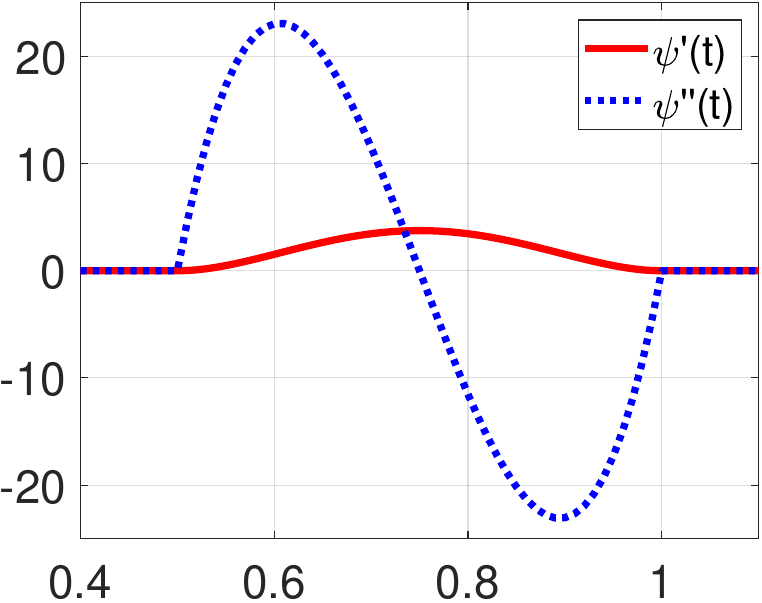}  &  
\includegraphics[scale=0.36]{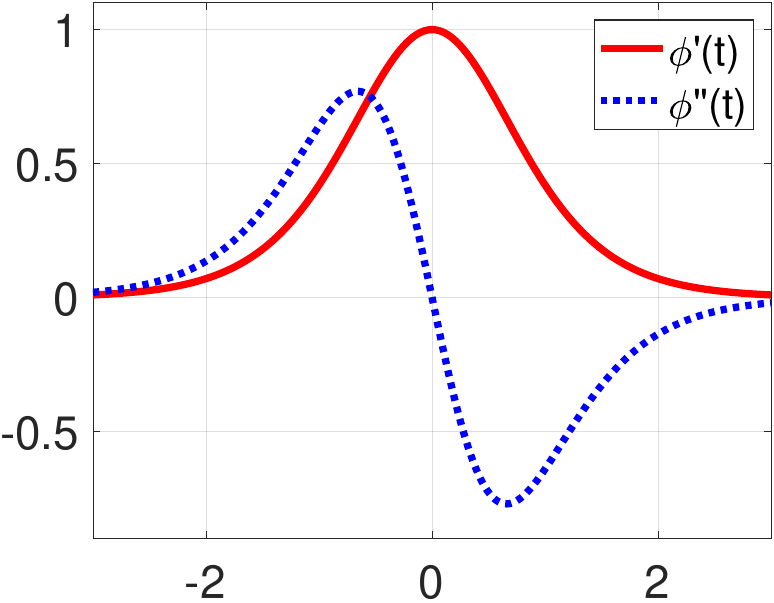} &
\includegraphics[scale=0.36]{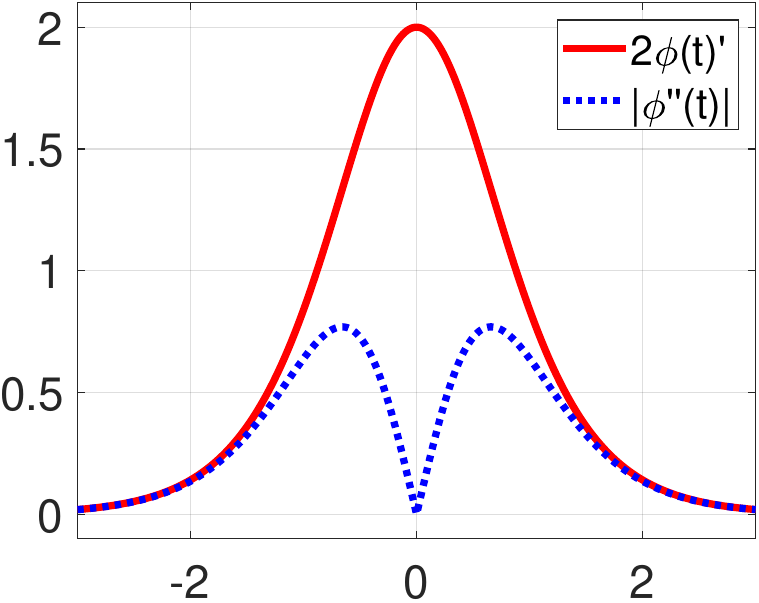} \\
\small (a) $\psi'(t)$ and $\psi''(t)$ &
\small  (b) $\phi'(t)$ and $\phi''(t)$ &
\small  (c) $2\phi'(t)$ and $|\phi''(t)|$ 
\end{tabular}
\caption{We plot first-order and second-order derivatives of functions $\psi$ and $\phi$.}\label{figure:component}
\end{figure}

We now follow \citet{carmon2020lower} to introduce the soft projection to a Euclidean ball centered at the origin with radius $R>0$, i.e.,
\begin{equation}
    \rho_R(\vx)
    :=
    \frac{\vx}{\sqrt{1+\Norm{\vx}^2/R^2}}.
    \label{eq:radial-compression}
\end{equation}
We focus on the case of $d\ge T$ and 
construct the function
\begin{equation}\label{eq:global-hard-function}
    H_{\mU,\mu}(\vx)
    := \bar f_T\big(\mU^\top\rho_R(\vx)\big)+\frac{\mu}{2}\Norm{\vx}^2~~~~~~\text{with}~~R=\frac{1000\sqrt{T}}{\mu} ~~ \text{and} ~~ \mu\in \left(0,\frac{1}{100}\right],
\end{equation}
where $\mU\in\operatorname{St}(d,T)$.

The following lemma shows key
properties for our hard instance.

\begin{lemma}
\label{lem:global-hard-properties}
The function $H_{\mU,\mu}$ defined in
\eqref{eq:global-hard-function} satisfies
\begin{equation}
    H_{\mU,\mu}(\vzero)
    -\inf_{\vx\in\BR^d}H_{\mU,\mu}(\vx)
    \le2T.
    \label{eq:global-hard-gap}
\end{equation}
Moreover, if there exists some $i\in[T]$ such that
$\left| [\mU^\top\rho_R(\vx)]_i\right|\le1$,
then it holds
\begin{equation}
    \Norm{\nabla H_{\mU,\mu}(\vx)}
    \ge\frac14.
    \label{eq:global-hard-gradient}
\end{equation}
Furthermore, all $\vx\in\BR^d$, it holds
\begin{equation}
    \Norm{\nabla^2H_{\mU,\mu}(\vx)}
    \le
    240\left(
    \Norm{\nabla H_{\mU,\mu}(\vx)}
    +\mu\log\frac1\mu
    \right).
    \label{eq:global-hard-curvature}
\end{equation}
\end{lemma}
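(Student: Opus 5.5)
The plan is to prove the three claims in order, with the curvature bound \eqref{eq:global-hard-curvature} as the main work. Throughout, write $\vy=\mU^\top\rho_R(\vx)$, $s=\sqrt{1+\Norm{\vx}^2/R^2}$ and $\vg=\mU\nabla\bar f_T(\vy)$. Then
\[
\nabla H_{\mU,\mu}(\vx)=D\rho_R(\vx)^\top\vg+\mu\vx,
\qquad
D\rho_R(\vx)=\frac1s\Big(\mI-\frac{\rho_R(\vx)\rho_R(\vx)^\top}{R^2}\Big).
\]
So $D\rho_R(\vx)$ is symmetric with eigenvalues $1/s$ and $1/s^3$, hence $\Norm{D\rho_R}\le1$, and a direct computation gives $\Norm{D^2\rho_R}\le 3/R$, as in \citet{carmon2020lower}.

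\textbf{Gap \eqref{eq:global-hard-gap}.} This is immediate. Since $\mu\Norm{\vx}^2/2\ge0$, we have $H_{\mU,\mu}(\vx)\ge\inf\bar f_T$, while $H_{\mU,\mu}(\vzero)=\bar f_T(\vzero)$. Lemma~\ref{lem:hard-chain-properties}(a) then gives a gap of at most $2T-1\le 2T$.

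\textbf{Gradient bound \eqref{eq:global-hard-gradient}.} I split on $\Norm{\vx}$.

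If $\Norm{\vx}>R/2$, then by Lemma~\ref{lem:hard-chain-properties}(b),
\[
\Norm{\nabla H}\ge\mu\Norm{\vx}-9\sqrt T\ge 500\sqrt T-9\sqrt T\ge 1/4.
\]

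If $\Norm{\vx}\le R/2$, then $s\le\sqrt5/2$, and I follow the index-wise argument of \citet{carmon2020lower}. The inspection of the proof of Lemma~\ref{lem:hard-chain-properties}(c) should produce a single index $j$ with $|y_j|\le1$ and $|\partial_j\bar f_T(\vy)|\ge 2/5$. The candidate is the first index with $|y_j|\le 1$, using that $\phi'(\pm y_j)>2/5$ and the gating by $\psi$. I then project onto the column $\vu_j$ of $\mU$:
\[
\langle\nabla H,\vu_j\rangle=\frac{\partial_j\bar f_T(\vy)}{s}-\frac{\langle\rho_R,\vg\rangle\,y_j}{sR^2}+\mu s\,y_j .
\]
The middle term is at most $9\sqrt T/R=9\mu/1000$ in absolute value, and the last is at most $\mu s\le 0.012$. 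Hence $|\langle\nabla H,\vu_j\rangle|\ge \frac{2}{5}\cdot\frac{2}{\sqrt5}-0.03\ge 1/4$.

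If no single coordinate of $\nabla\bar f_T$ is that large, the fallback is the cruder bound
\[
\Norm{\nabla H}\ge\frac{\Norm{\vg}}{s^3}-\mu\Norm{\vx}
\]
in the further subregion $\Norm{\vx}\le 1/(20\mu)$. There $s\approx1$ and $\mu\Norm{\vx}\le 1/20$, which again gives $\ge 1/4$.

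\textbf{Curvature bound \eqref{eq:global-hard-curvature}.} The chain rule gives
\[
\nabla^2H=D\rho_R^\top\mU\nabla^2\bar f_T(\vy)\mU^\top D\rho_R+\sum_k g_k\nabla^2[\rho_R]_k+\mu\mI .
\]
The second term is at most $9\sqrt T\cdot 3/R\le 0.03\mu$. By Lemma~\ref{lem:hard-chain-properties}(d), the first is at most $160\Norm{\nabla\bar f_T(\vy)}$. So it suffices to show $\Norm{\nabla\bar f_T(\vy)}\le \tfrac32\Norm{\nabla H}+c\,\mu\log\frac1\mu$ for a suitable constant $c$.

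If $\Norm{\vx}>R/2$, the previous paragraph gives $\Norm{\nabla H}\ge 491\sqrt T$, and the bound follows trivially from $\Norm{\nabla^2H}\le 58+1$.

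Otherwise $s\le\sqrt5/2$, and I argue coordinatewise along $\vu_i$. Up to $O(\mu)$ errors, $\langle\nabla H,\vu_i\rangle\approx\partial_i\bar f_T(\vy)/s+\mu s y_i$. Either $|\partial_i\bar f_T|\lesssim|\langle\nabla H,\vu_i\rangle|$, or the two terms nearly cancel, so that $|\partial_i\bar f_T|\approx\mu s|y_i|$. In the cancelling case I use the exponential decay of the chain's derivatives: $\partial_i\bar f_T$ and the relevant Hessian entries are bounded by combinations of $\phi'(\pm y_i)=\operatorname{sech}^2(y_i)\le4e^{-2|y_i|}$ and neighbouring terms. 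Thus $\mu|y_i|\lesssim e^{-2|y_i|}$ forces $|y_i|\lesssim\log\frac1\mu$, and so $|\partial_i\bar f_T|\lesssim\mu\log\frac1\mu$.

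\textbf{Main obstacle.} This last step is the hard one. Summing the coordinatewise bounds naively would cost a $\sqrt T$ factor. To avoid it, I expect to exploit that $\nabla^2\bar f_T$ is tridiagonal with entries controlled by those same decaying factors. That lets the Hessian be bounded through $\max_i$ of the local quantities rather than through $\Norm{\nabla\bar f_T}$, so the loss is only a constant absorbed into $240$.
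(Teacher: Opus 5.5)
Your arguments for the gap \eqref{eq:global-hard-gap} and the gradient bound \eqref{eq:global-hard-gradient} are fine and essentially match the paper. The paper splits at $\Norm{\vx}=20\sqrt T/\mu$ rather than $R/2$, and likewise projects onto $\vu_{i_\star}$ for the first index with $|y_{i_\star}|\le1$. Your ``fallback'' is never needed, since that index always gives $\partial_{i_\star}\bar f_T(\vy)\le-\phi'(y_{i_\star})<-\frac25$.

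The gap is in the curvature bound. Your reduction ``it suffices to show $\Norm{\nabla\bar f_T(\vy)}\le\frac32\Norm{\nabla H_{\mU,\mu}(\vx)}+c\,\mu\log\frac1\mu$'' is false for every $T$-independent $c$. Take $\vx=z\,\mU\mathbf{1}$, so $\vy=y\mathbf{1}$ with $y=z/s$. For $y>1$ all $\psi'(\pm y)$ vanish and $\psi(y)+\psi(-y)=1$, so $\nabla\bar f_T(\vy)=-\phi'(y)\mathbf{1}$. Using $1-\Norm{\rho_R(\vx)}^2/R^2=1/s^2$, one gets
\[
\nabla H_{\mU,\mu}(\vx)=\Big(\mu s y-\frac{\phi'(y)}{s^3}\Big)\mU\mathbf{1}.
\]
The bracket is negative for $y$ slightly above $1$, since $s\approx1$ and $\phi'(1)>\frac25$. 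It is positive at $y=\log\frac1\mu$, where $\phi'(y)\le4\mu^2$. So it vanishes at some $y^\ast>1$ with $\phi'(y^\ast)=\mu s^4y^\ast>\mu$. At that point $\nabla H_{\mU,\mu}(\vx)=\vzero$, but $\Norm{\nabla\bar f_T(\vy)}=\sqrt T\,\phi'(y^\ast)>\sqrt T\mu$. Hence bounding the first Hessian term by $160\Norm{\nabla\bar f_T(\vy)}$ via Lemma~\ref{lem:hard-chain-properties}(d) is already too lossy by a factor $\sqrt T$.

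Your last paragraph senses this, but the remedy you sketch is not correct as stated. Away from saturation, the tridiagonal entries of $\nabla^2\bar f_T$ contain $\psi''(\pm y_{i-1})\phi(\pm y_i)$ (of size up to $48$) and $\psi'$ terms, and these are not controlled by decaying $\phi'$ factors. Likewise, the bound $|\partial_i\bar f_T(\vy)|\lesssim e^{-2|y_i|}$ used in your cancellation step fails whenever $\psi'(\pm y_i)\neq0$.

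The missing idea is a case split on saturation, which is what the paper does.

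\textbf{Some $|y_i|\le1$.} You already have $\Norm{\nabla H_{\mU,\mu}(\vx)}\ge\frac14$, and the uniform bound $\Norm{\nabla^2H_{\mU,\mu}(\vx)}\le58+27\sqrt T/R+\mu<59<240\cdot\frac14$ finishes.

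\textbf{All $|y_i|>1$.} Every $\psi'$ and $\psi''$ term vanishes, so $\nabla^2\bar f_T(\vy)=-\operatorname{diag}(\phi''(y_1),\dots,\phi''(y_T))$ exactly. Hence $\Norm{\nabla^2\bar f_T(\vy)}\le2\max_i\phi'(y_i)$, a maximum over coordinates rather than a Euclidean norm. In the region $\Norm{\vx}\le R/2$ (so $s\le\sqrt5/2$), let $j$ maximize $\phi'(y_j)$. If $|y_j|\ge\log\frac1\mu$, then $\phi'(y_j)\le4e^{-2|y_j|}\le4\mu^2\le\mu\log\frac1\mu$. Otherwise, your own projection formula with $\partial_j\bar f_T(\vy)=-\phi'(y_j)$ gives
\[
\frac{\phi'(y_j)}{s}\le\Norm{\nabla H_{\mU,\mu}(\vx)}+\frac{9}{1000}\mu\log\frac1\mu+\mu s\log\frac1\mu .
\]
Therefore $\max_i\phi'(y_i)\le2\big(\Norm{\nabla H_{\mU,\mu}(\vx)}+\mu\log\frac1\mu\big)$. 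Then $\Norm{\nabla^2H_{\mU,\mu}(\vx)}\le2\max_i\phi'(y_i)+0.03\mu+\mu$ yields \eqref{eq:global-hard-curvature} with room to spare.

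Your cancellation heuristic is the right local estimate. But it is only valid, and only needed, in the saturated regime and for the single maximizing coordinate, so no summation over coordinates ever occurs.
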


\begin{remark}
Following Remark \ref{remark:fT}, inequality (\ref{eq:global-hard-curvature}) indicates function $H_{\mU,\mu}$ defined in
\eqref{eq:global-hard-function} is $(\mu\log(1/\mu),300)$-smooth, going beyond the hard instance for standard smooth optimization \citep{carmon2020lower}.
\end{remark}

We scale the function $H_{\mU,\mu}$ to achieve the hard instance $f_\mU:\BR^d\to\BR$ as 
\begin{equation}\label{eq:scaled-hard-function}
    f_{\mU}(\vx):=aH_{\mU,\mu}\left(\frac{\vx}{b}\right),    
\end{equation}
with $a,b>0$. The following lemma shows we can achieve the desired $(L_0, L_1)$-smooth function for all $L_0,L_1>0$ by appropriate parameters setting.

\begin{lemma}\label{lem:hard-scaling}
For all $L_0,L_1,\epsilon>0$, we follow definitions in equations 
(\ref{eq:gate-functions})--(\ref{eq:global-hard-function}) by taking
\begin{equation}\label{eq:hard-scaling-parameters}
\mu:=\frac{\delta_0}{4\log({\mathrm{e}}/\delta_0)}, \quad
a:=\frac{7680\epsilon}{L_1}  \quad
b:=\frac{480}{L_1}, \quad \text{with}~~
\delta_0:= \min\left\{10^{-2}, \frac{L_0}{16L_1\epsilon} \right\}.
\end{equation}

Then the scaled function $f_{\mU}(\vx):=aH_{\mU,\mu}\left({\vx}/{b}\right)$ has the following properties.
\begin{enumerate}[label=(\alph*),leftmargin=0.65cm]
\item The function $f_{\mU}$ is $(L_0,L_1)$-smooth.
\item The optimality gap holds $f_{\mU}(\vzero)-\inf_{\vx\in\BR^d}f_{\mU}(\vx)    \le2aT$.
\item For given $\vx=[x_1,\dots,x_T]\in\BR^d$,  if there exists $i\in[T]$ such that
$\left|\left[\mU^\top\rho_R(\vx/b)\right]_i\right|\le1$, then~\mbox{$\Norm{\nabla f_{\mU}(\vx)} \ge4\epsilon$.}

\end{enumerate} 

\end{lemma}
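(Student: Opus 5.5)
Parts (b) and (c) follow from Lemma~\ref{lem:global-hard-properties} by the chain rule; part (a) needs an integral version of the Hessian bound and is the real work.

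For (b), the scaling gives
\[
f_{\mU}(\vzero)-\inf f_{\mU} = a\bigl(H_{\mU,\mu}(\vzero)-\inf H_{\mU,\mu}\bigr)\le 2aT
\]
by \eqref{eq:global-hard-gap}. For (c), we have $\nabla f_{\mU}(\vx)=\frac{a}{b}\nabla H_{\mU,\mu}(\vx/b)$ and $a/b=16\epsilon$. Under the stated hypothesis, \eqref{eq:global-hard-gradient} applied at the point $\vx/b$ gives $\Norm{\nabla f_{\mU}(\vx)}\ge 16\epsilon\cdot\frac14=4\epsilon$.

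For (a), we have $\nabla^2 f_{\mU}(\vx)=\frac{a}{b^2}\nabla^2 H_{\mU,\mu}(\vx/b)$. By \eqref{eq:global-hard-curvature},
\[
\Norm{\nabla^2 f_{\mU}(\vx)}\le \frac{240a}{b^2}\Norm{\nabla H_{\mU,\mu}(\vx/b)}+\frac{240a}{b^2}\mu\log\frac1\mu
= \frac{240}{b}\Norm{\nabla f_{\mU}(\vx)} + \frac{240a}{b^2}\mu\log\frac1\mu .
\]
With $b=480/L_1$ the first coefficient is $L_1/2$. For the second, $\frac{240a}{b^2}=8L_1\epsilon$. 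Since $\log(1/\mu)\le 2\log(\mathrm e/\delta_0)$ for $\delta_0\le 10^{-2}$ (because $\mu\ge\delta_0^2$ in that range), we get $\mu\log(1/\mu)\le \delta_0/2$. Hence the constant term is at most $4L_1\epsilon\delta_0\le L_0/4$. This yields the pointwise bound
\[
\Norm{\nabla^2 f_{\mU}(\vx)}\le \frac{L_0}{4}+\frac{L_1}{2}\Norm{\nabla f_{\mU}(\vx)} .
\]

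The main obstacle is turning this Hessian bound into the required gradient-difference inequality on $\Norm{\vx-\vy}\le 1/L_1$. I would use the standard Grönwall argument (cf.\ \citet{zhang2020gradient}). Set $\vz(s)=\vx+s(\vy-\vx)$ and $g(s)=\Norm{\nabla f_{\mU}(\vz(s))}$, and write $r=\Norm{\vy-\vx}\le 1/L_1$. Then
\[
g(s)\le g(0)+\int_0^s\Bigl(\frac{L_0}{4}+\frac{L_1}{2}g(\tau)\Bigr)r\,d\tau,
\]
which gives $\frac{L_0}{4}+\frac{L_1}{2}g(s)\le \bigl(\frac{L_0}{4}+\frac{L_1}{2}g(0)\bigr)e^{L_1 r s/2}$. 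Using $e^{L_1 r s/2}\le e^{1/2}\le 2$, this is bounded by $\frac{L_0}{2}+L_1g(0)$. Integrating once more,
\[
\Norm{\nabla f_{\mU}(\vy)-\nabla f_{\mU}(\vx)}\le \int_0^1\Norm{\nabla^2 f_{\mU}(\vz(s))}\,r\,ds\le \bigl(L_0+L_1\Norm{\nabla f_{\mU}(\vx)}\bigr)\Norm{\vx-\vy},
\]
which is (a). The factor-$2$ slack in the constants, i.e.\ the $L_1/2$ coefficient and the $L_0/4$ margin, is exactly what absorbs the $e^{1/2}$ factor.
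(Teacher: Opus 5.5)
Your proposal is correct and follows the paper's proof. Parts (b) and (c) go through the chain rule with $a/b=16\epsilon$, and part (a) goes through the pointwise bound $\Norm{\nabla^2 f_{\mU}(\vx)}\le \frac{L_1}{2}\Norm{\nabla f_{\mU}(\vx)}+8L_1\epsilon\,\mu\log(1/\mu)$ together with $\mu\log(1/\mu)\le\delta_0/2$. The main difference is in the last step of (a): you carry out the Gr\"onwall integration along the segment yourself, keeping the sharper $L_0/4$ constant and using $e^{1/2}\le 2$. The paper instead relaxes to $\frac12(L_0+L_1\Norm{\nabla f_{\mU}})$ and invokes Corollary A.4 of \citet{zhang2020improved} with $(L_0/2,L_1/2,c=1)$, which is the same argument packaged as a lemma.
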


\section{The Lower Complexity Bound}\label{sec:lower-bound}

We establish lower bound for randomized algorithms by constructing stochastic gradient evaluation which is probabilistic zero-chains \citep{arjevani2023lower}.

\begin{definition}[{\citet[Definition 2]{arjevani2023lower}}]
\label{def:probabilistic-zero-chain}
A stochastic gradient $\vg:\BR^T\times\fZ\to\BR^T$ is a probability-$\theta$ zero-chain with $\theta\in[0,1]$ if it holds
\begin{equation}
\mathbb P
\Big(\forall\,\vy\in\BR^T:
    \operatorname{prog}_0
    \big(\vg(\vy;Z)\big) \le \operatorname{prog}_{1/4}(\vy),
    \vg(\vy;Z) =\vg\big(\Pi_{\operatorname{prog}_{1/4}(\vy)}(\vy); Z\big)
\Big) \ge 1-\theta,
\label{eq:prob-zero-chain-no-progress}
\end{equation}
and
\begin{equation}
\!\!\!\mathbb P_Z
\Big(\forall\,\vy\in\BR^T\!:
    \operatorname{prog}_0
    \big(\vg(\vy;Z)\big) \le 1+\operatorname{prog}_{1/4}(\vy),
    \vg(\vy;Z) = \vg\big(\Pi_{1+\operatorname{prog}_{1/4}(\vy)}(\vy); Z\big)
\Big)=1.
\label{eq:prob-zero-chain-one-progress}
\end{equation}
Here, we denote $\Pi_s(\vy)=[y_1,\dots,y_s,0,\dots,0]^\top$ for given $\vy=[y_1,\dots,y_d]^\top\in\BR^d$, where we let $\Pi_s(\vy)=\vy$ when $s>T$ for convention.

\end{definition}

We introduce a stochastic evaluation for functions $\bar f_T$ as follows
\begin{equation}
    \bar F_T(\vy;Z)
    :=
    \bar f_T(\vy)
    +
    \left(\frac{Z}{q}-1\right)Q_T(\vy).
    \label{eq:stochastic-chain}
\end{equation}
where $Z\sim\mathrm{Bernoulli}(q)$ with $q\in(0,1]$ and the function $Q_T:\BR^T\to\BR$ is defined as
\begin{equation}
    Q_T(\vy):=\sum_{i=1}^{T}A_i(\vy)\left(\,\prod_{k=i}^{T}(1-S(4|y_{k}|-1)\right).
    \label{eq:masking-Q}
\end{equation}

Here, the functions $S$ and $A_i$ follows the definitions in
\eqref{eq:gate-S} and \eqref{eq:hard-zero-chain}, respectively.
% Note that $\chi(t)=1$ for $|t|\le1/4$ and $\chi(t)=0$ for $|t|\ge1/2$.
The above constructions the following properties.

\begin{lemma}\label{lem:information-hiding}
The functions $\bar F_T$ and $Q_T$ defined in equations (\ref{eq:stochastic-chain}) and (\ref{eq:masking-Q}) satisfy
\begin{enumerate}[label=(\alph*),leftmargin=0.65cm]
\item For all $\vy\in\BR^T$, it holds $\Norm{\nabla Q_T(\vy)}\le249$.
\item The stochastic gradient evaluation 
$\vg_T(\vy;Z):=\nabla\bar F_T(\vy;Z)$ is a probability-$q$ zero-chain and holds 
%$\BE[\nabla\bar \vg_T(\vy;Z)]=\nabla\bar f_T(\vy)$.
$\BE_Z[\vg_T(\vy;Z)]=\nabla\bar f_T(\vy)$
\end{enumerate}

\end{lemma}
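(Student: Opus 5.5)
Part (a) is a direct bound on $\nabla Q_T$, and part (b) reduces, via unbiasedness, to checking the two zero-chain conditions (\ref{eq:prob-zero-chain-no-progress}) and (\ref{eq:prob-zero-chain-one-progress}).

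\textbf{Part (a).} Write $\chi(t):=1-S(4|t|-1)$. This function satisfies $\chi(t)=1$ for $|t|\le 1/4$ and $\chi(t)=0$ for $|t|\ge 1/2$, and $|\chi'(t)|\le 4\cdot\max|S'| = 4\cdot\frac{15}{8}=7.5$. It is $C^1$, since $S$ is flat near $u=0$ and so the kink of $|t|$ at the origin causes no issue. Each $A_i$ depends only on $(y_{i-1},y_i)$, with $|A_i|\le 2$ and with $\partial A_i$ bounded through Lemma \ref{lem:gate-properties}(b)--(c): $|\psi'|\le 4$, $\psi\le 1$, $\phi<2$, $\phi'\le1$. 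Set $P_i:=\prod_{k\ge i}\chi(y_k)$. The gradient $\nabla Q_T=\sum_i (P_i\nabla A_i + A_i\nabla P_i)$ is a sparse sum. The key observation is that $P_i\neq 0$ forces $|y_k|<1/2$ for all $k\ge i$. In that regime $\psi(\pm y_{k-1})=0$ for every $k\ge i+1$, so $A_k\equiv 0$ locally for $k\ge i+2$, and its gradient vanishes there as well. Hence only a bounded number of terms are nonzero at any given $\vy$.

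Concretely, let $j$ be the smallest index with $P_j(\vy)\ne0$ (or with $\nabla P_j\neq 0$). Only $i\in\{j-1,j,j+1\}$ contribute nonzero terms, together with derivative terms $A_i\partial_k\chi$ for $k\ge i$ in a neighborhood of size $O(1)$. Summing these constants coordinate-wise yields a numerical bound, and I would tune the accounting to reach $249$. This bookkeeping is the main obstacle. The difficulty is that $\partial_{y_k}P_i$ may be nonzero for many $k$ in principle, so I will argue that $A_i\ne0$ together with $P_i\ne0$ pins down a single active window. Using $|A_i|\le2$ and $|\chi'|\le 7.5$ on at most a constant number of coordinates, with each coordinate derivative bounded separately, should suffice.

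\textbf{Part (b), unbiasedness.} This is immediate: $\BE[Z/q-1]=0$, and $\nabla\bar F_T=\nabla\bar f_T+(Z/q-1)\nabla Q_T$.

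\textbf{Part (b), zero-chain conditions.} Let $s=\operatorname{prog}_{1/4}(\vy)$, so that $|y_k|\le1/4$ for all $k>s$. Then $\chi(y_k)=1$ for $k>s$, and on the neighborhood where those coordinates stay below $1/4$, all derivatives of these $\chi$ factors vanish.

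\emph{Event $Z=0$.} Here $\bar F_T=\bar f_T-Q_T=\sum_i A_i(1-P_i)$. For $i>s$ we have $P_i=1$ locally with vanishing derivatives, and also $\psi(\pm y_{i-1})=0$ when $i-1>s$. Thus terms with index $\ge s+1$ contribute nothing, and the term $i=s+1$ is killed by the factor $1-P_{s+1}=0$. Consequently $\nabla\bar F_T(\vy;0)$ depends only on $y_1,\dots,y_s$ and is supported on $[s]$. A related point is that $1-P_i$ for $i\le s$ involves $\chi(y_k)$ with $k>s$ only through constant factors equal to $1$. Hence $\vg(\vy;0)=\vg(\Pi_s(\vy);0)$, and this event has probability $1-q$.

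\emph{Event $Z=1$.} The gradient is $\nabla\bar f_T+(1/q-1)\nabla Q_T$. By Lemma \ref{lem:hard-chain-properties}(e), $\nabla \bar f_T$ has support in $[s+1]$. The same locality argument applied to $Q_T$ gives dependence only on $y_1,\dots,y_{s+1}$ and support in $[s+1]$, which verifies (\ref{eq:prob-zero-chain-one-progress}).
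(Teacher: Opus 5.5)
\textbf{The gap is in part (a).} Your bookkeeping relies on the claim that $A_i\neq0$ and $P_i\neq0$ confine the nonzero entries of $\nabla P_i$ to a window of $O(1)$ coordinates. You then apply $|\chi'|\le 7.5$ coordinate by coordinate. That claim is false. Take $\vy=(0.3,\dots,0.3)$: then $A_1=-\phi(0.3)\neq0$ and $P_1=\chi(0.3)^T\neq0$. Since $0.3\in(1/4,1/2)$, also $\partial_{y_k}P_1=\chi'(0.3)\,\chi(0.3)^{T-1}\neq0$ for every $k\in[T]$.

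The localization you can actually prove is only in the outer sum. With $m=\operatorname{prog}_{1/2}(\vy)$, one has $P_i=\nabla P_i=0$ for $i\le m$ and $A_i=\nabla A_i=0$ for $i\ge m+2$. Hence $\nabla Q_T=P_{m+1}\nabla A_{m+1}+A_{m+1}\nabla P_{m+1}$. But the full vector $\nabla P_{m+1}$ must still be controlled, and counting coordinates with $|\chi'|\le 7.5$ gives only $O(\sqrt T)$. Per-coordinate summation cannot be rescued either. Set every $y_k=(1+u)/4$ with $u=(10T)^{-1/3}$; then $\sum_k|\partial_{y_k}P_1|$ grows like $T^{1/3}$. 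So a $T$-independent bound on $\Norm{\nabla P_i}$ is genuinely an $\ell_2$ phenomenon, and it cannot follow from $\sup|\chi'|$ alone.

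\textbf{The missing ingredient} is an inequality coupling $\chi'$ to $1-\chi$. The paper proves $|\chi'(t)|^2\le 14400\bigl(1-\chi(t)^2\bigr)$. To see this, write $u=4|t|-1\in(0,1)$, so $|\chi'|=120u^2(1-u)^2$. Since $S(u)\ge u^3$, you get $u^4(1-u)^4\le u^3\le S(u)(2-S(u))=1-\chi^2$. Consequently
\[
\Norm{\nabla P_i(\vy)}^2=\sum_{j\ge i}\chi'(y_j)^2\prod_{k\ge i,\,k\ne j}\chi(y_k)^2\le 14400\sum_{j\ge i}\bigl(1-\chi(y_j)^2\bigr)\prod_{k\ge i,\,k\ne j}\chi(y_k)^2\le 14400,
\]
where the last sum is a sub-sum of the expansion of $\prod_{k\ge i}\bigl(\chi(y_k)^2+(1-\chi(y_k)^2)\bigr)=1$. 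Combine this with $|A_{m+1}|\le2$, $0\le P_{m+1}\le1$ and $\Norm{\nabla A_{m+1}}\le\sqrt{65}<9$. This gives exactly $9+2\cdot120=249$.

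\textbf{Part (b)} is essentially the paper's argument. Two small omissions should be filled. First, the almost-sure one-progress condition must also be checked on $\{Z=0\}$. This is immediate from your $Z=0$ analysis, since $\Pi_s(\Pi_{s+1}(\vy))=\Pi_s(\vy)$. Second, Lemma~\ref{lem:hard-chain-properties}(e) gives only the support of $\nabla\bar f_T$. The identity $\nabla\bar f_T(\vy)=\nabla\bar f_T(\Pi_{s+1}(\vy))$ additionally needs $\psi'(\pm y_{s+1})=0$.
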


We define the scale stochastic first-order oracle for function $f_\mU$ as follows
\begin{equation}
    \mathsf O_{f_{\mU}}(\vx;Z)
    :=\nabla F_{\mU}(\vx;Z),
    \label{eq:hard-stochastic-oracle}
\end{equation}
where
\begin{equation}\label{eq:scaled-stochastic-function}
    F_{\mU}(\vx;Z)
    := a\left(\bar F_T\left(\mU^\top\rho_R\left(\frac{\vx}{b}\right);Z\right) 
    + \frac{\mu}{2}\Norm{\frac{\vx}{b}}^2
    \right).
\end{equation}
and parameters $a$, $b$, and $\mu$ follows the setting of Lemma \ref{lem:hard-scaling}.
The following lemma show that the oracle $\mathsf O_{f_{\mU}}(\vx;Z)$ is unbiased and has bounded $p$th bounded central moment.

\begin{lemma}\label{lem:oracle-validity}
For given $p\in(1,2]$ and $\sigma\ge0$, we let
\begin{equation}\label{eq:revelation-probability}
    q:=\begin{cases}
    1, & \sigma = 0, \\[0.25cm]
    \displaystyle{\min\left\{ 1,\, \left(\frac{2\cdot(3984)^p\epsilon^p}{\sigma^p}\right)^{\frac{1}{p-1}}
    \right\}}, & \sigma > 0.
    \end{cases}
\end{equation}
Then for all $\vx\in\BR^d$, it holds
\begin{align}
    \BE_Z[\mathsf O_{f_{\mU}}(\vx;Z)] =\nabla f_{\mU}(\vx),
    \qquad \text{and} \qquad
    \BE_Z\left[\Norm{\mathsf O_{f_{\mU}}(\vx;Z) -\nabla f_{\mU}(\vx)}^p \right] \le\sigma^p.
    \label{eq:oracle-p-moment}
\end{align}
\end{lemma}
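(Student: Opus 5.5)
The plan is to compute the oracle noise explicitly via the chain rule and then bound its $p$th moment using the Bernoulli structure of $Z$. Differentiating \eqref{eq:scaled-stochastic-function} and comparing with $f_\mU(\vx)=aH_{\mU,\mu}(\vx/b)$ shows that the deterministic parts coincide. Hence
\begin{equation*}
\mathsf O_{f_\mU}(\vx;Z)-\nabla f_\mU(\vx)=\frac{a}{b}\left(\frac{Z}{q}-1\right)D\rho_R\!\left(\frac{\vx}{b}\right)^{\!\top}\mU\,\nabla Q_T\!\left(\mU^\top\rho_R\!\left(\frac{\vx}{b}\right)\right).
\end{equation*}
Unbiasedness is then immediate from $\BE[Z/q-1]=0$, since the remaining factor is deterministic given $\vx$. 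Alternatively, one can invoke $\BE_Z[\vg_T(\vy;Z)]=\nabla\bar f_T(\vy)$ from Lemma~\ref{lem:information-hiding}(b) together with linearity of the chain rule. In the case $\sigma=0$ we have $q=1$, so $Z\equiv1$, the noise vanishes identically, and there is nothing further to prove.

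For $\sigma>0$, the next step is to bound the deterministic factor. From \citet{carmon2020lower} the soft projection satisfies $\Norm{D\rho_R(\vx)}\le1$, a standard fact that I would verify by computing $D\rho_R(\vx)=(1+\Norm{\vx}^2/R^2)^{-1/2}(\mI-\vx\vx^\top/(R^2+\Norm{\vx}^2))$. Moreover $\Norm{\mU}=1$ because $\mU\in\operatorname{St}(d,T)$, and Lemma~\ref{lem:information-hiding}(a) gives $\Norm{\nabla Q_T}\le249$. Combining these, the norm of the deterministic factor is at most $249a/b$. With the choices in \eqref{eq:hard-scaling-parameters} we have $a/b=7680\epsilon/480=16\epsilon$, so the factor is at most $16\cdot249\,\epsilon=3984\epsilon$, which matches the constant appearing in \eqref{eq:revelation-probability}.

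It remains to compute $\BE|Z/q-1|^p$ for $Z\sim\mathrm{Bernoulli}(q)$. This equals
\begin{equation*}
q\left(\frac{1-q}{q}\right)^p+(1-q)
\le q^{1-p}+1\le 2q^{1-p},
\end{equation*}
where the last inequality uses $q\le1$ and $p>1$. Therefore $\BE\Norm{\mathsf O_{f_\mU}(\vx;Z)-\nabla f_\mU(\vx)}^p\le 2(3984\epsilon)^pq^{1-p}$.

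If $q<1$, the definition gives $q^{p-1}=2(3984)^p\epsilon^p/\sigma^p$, so the bound equals exactly $\sigma^p$. If instead $q=1$, the noise is zero outright. The only mildly delicate point in the whole argument is the factor $2$ in this moment estimate, which the definition of $q$ is designed to absorb. Beyond that I expect no real obstacle; the work is bookkeeping of the chain rule and the constants.
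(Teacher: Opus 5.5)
Your proposal is correct and follows the paper's proof essentially step by step: the same explicit representation of the noise as $\frac{a}{b}(Z/q-1)D\rho_R(\vx/b)^\top\mU\nabla Q_T(\vy)$, the same bound $16\epsilon\cdot 249=3984\epsilon$ on the deterministic factor, the same Bernoulli moment estimate $(1-q)^pq^{1-p}+1-q\le 2q^{1-p}$, and the same case split $q<1$ versus $q=1$. The only cosmetic difference is that you cite \citet{carmon2020lower} and an explicit Jacobian formula for $\|D\rho_R\|\le 1$, where the paper cites \citet{arjevani2023lower}.
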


We now show that the above oracle results the high-probability lower bound on gradient norm at points generated by randomized algorithms.

\begin{lemma}\label{lem:randomized-hardness}

We let 
\begin{equation}\label{eq:rotation-dimension}
\delta\in(0,1), \qquad \mU\sim\operatorname{Unif}(\operatorname{St}(d,T)),
\qquad\text{and}\qquad
d\ge \left\lceil T+32R^2\log\left(\frac{2T^2}{q\delta}\right) \right\rceil.
\end{equation}
Then for any randomized algorithm $\mathsf A\in\mathcal A_{\mathrm{rand}}$ associated with the oracle $\mathsf O_{f_{\mU}}(\vx;Z)$ defined in equations~(\ref{eq:hard-stochastic-oracle})
with probability at least $1-\delta$, it holds
\begin{equation}
    \Norm{\nabla f_{\mU}(\vx^{(t)})}
    \ge4\epsilon
    \label{eq:randomized-gradient-hardness}
\end{equation}
for all  $t\le(T-\log(2/\delta))/(2q)$,
where $\vx^{(t)}$ is the $t$th query generated by randomized algorithm $\mathsf A$. 
%The probability is taken over $\mU$,the oracle randomness, and the internal randomness of $\mathsf A$.
\end{lemma}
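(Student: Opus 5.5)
The plan follows the random-rotation argument of \citet[Lemma 5]{arjevani2023lower}, built on \citet{carmon2020lower}, with Lemma~\ref{lem:hard-scaling}(c) supplying the final gradient bound. By Lemma~\ref{lem:hard-scaling}(c), it suffices to show that with probability at least $1-\delta$ every query satisfies $\big|[\mU^\top\rho_R(\vx^{(t)}/b)]_T\big|\le 1$, since then $i=T$ is a coordinate with magnitude at most $1$. We will in fact aim for the stronger statement $\operatorname{prog}_{1/4}\big(\mU^\top\rho_R(\vx^{(t)}/b)\big)<T$, which is implied by showing the $T$th coordinate has magnitude at most $1/4<1$.

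Write $\vy^{(t)}:=\mU^\top\rho_R(\vx^{(t)}/b)$. By the chain rule, the oracle response is
\[
\mathsf O_{f_\mU}(\vx;Z)=\tfrac{a}{b}\, D\rho_R(\vx/b)^\top\mU\,\vg_T(\vy;Z)+\tfrac{a\mu}{b^2}\vx .
\]
Hence the response depends on $\mU$ only through the columns $\vu_1,\dots,\vu_{s}$ with $s=\operatorname{prog}_0(\vg_T(\vy;Z))$. By Lemma~\ref{lem:information-hiding}(b), $\vg_T$ is a probability-$q$ zero-chain. So, outside a probability-$q$ event per query, $\vg_T(\vy;Z)$ depends only on $\Pi_{\operatorname{prog}_{1/4}(\vy)}(\vy)$, and its support does not exceed the current progress. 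Define the running maximal progress $\gamma_t:=\max_{t'<t}\operatorname{prog}_{1/4}(\vy^{(t')})$. Two facts then follow. First, $\gamma$ increases by at most one per query, by property~\eqref{eq:prob-zero-chain-one-progress}. Second, conditional on the past, it increases with probability at most $q$, provided every query is ``uninformed'' about the unrevealed columns, meaning $|\langle\vu_j,\rho_R(\vx^{(t)}/b)\rangle|<1/4$ for all $j>\gamma_t+1$. The increments are governed by independent Bernoulli$(q)$ coins, so a Chernoff bound gives $\mathbb P(\gamma_t\ge T)\le\delta/2$ whenever $t\le (T-\log(2/\delta))/(2q)$. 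This is a standard binomial tail: the sum of $t$ Bernoulli$(q)$ variables with mean $qt\le T/2$ exceeding $T$.

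It remains to control the uninformed condition, which I expect to be the main obstacle. Condition on $\vu_1,\dots,\vu_{\gamma_t+1}$, the oracle noise, and the algorithm seed. Then each query $\vx^{(t)}$ is a measurable function of revealed information only. The unrevealed columns are uniformly distributed on the Stiefel manifold orthogonal to the revealed span. Since $\|\rho_R(\cdot)\|\le R$, concentration of measure on the sphere gives, for each fixed $j$,
\[
\mathbb P\Big(|\langle\vu_j,\rho_R(\vx^{(t)}/b)\rangle|\ge \tfrac14\Big)\le 2\exp\!\big(-(d-T)/(32R^2)\big).
\]
The projection issue onto the complement of the revealed span is handled exactly as in \citet[Lemma 4]{carmon2020lower}.

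Choosing $d$ as in \eqref{eq:rotation-dimension} makes this at most $q\delta/T^2$. A union bound over $j\le T$ and over $t\le T/q$ then bounds the total failure probability by $\delta/2$. The delicate part is writing the induction so that the conditioning on past responses does not bias the remaining columns. One must check that the responses are genuinely independent of $\vu_j$ for $j>\gamma_t+1$ on the good event; the Stiefel-uniform rotation gives exactly this. Combining the two failure events, with probability at least $1-\delta$ we have $\gamma_t<T$ for all admissible $t$, hence $|y^{(t)}_T|\le 1/4\le1$. Lemma~\ref{lem:hard-scaling}(c) then yields \eqref{eq:randomized-gradient-hardness}.
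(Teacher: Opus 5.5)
Your reduction matches the paper's. The paper simply invokes Lemma~5 of \citet{arjevani2023lower} with $K=1$, which is legitimate because that proof needs only the probability-$q$ zero-chain property of Lemma~\ref{lem:information-hiding}(b) and $\Norm{\rho_R(\cdot)}\le R$. This gives $\operatorname{prog}_{1/4}(\mU^\top\rho_R(\vx^{(t)}/b))<T$, hence $|[\mU^\top\rho_R(\vx^{(t)}/b)]_T|\le 1/4$, and the paper finishes with the gradient certificate and $a/b=16\epsilon$. If you cite that lemma, your proof is complete. Your self-contained re-derivation, however, has two steps that fail as written.

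The first is the Chernoff bookkeeping. With $\gamma_t$ defined as the largest $\operatorname{prog}_{1/4}$ of past \emph{queries}, it is false that $\gamma$ increases with conditional probability at most $q$. Once a response with $Z_s=1$ has exposed $\vu_{\gamma+1}$, the next query can raise $\gamma$ with probability one. Moreover, you impose uninformedness only for $j>\gamma_t+1$. That does not stop a query from reaching $|y^{(t)}_{\gamma_t+1}|>1/4$ before any coin has exposed column $\gamma_t+1$, so the increments are not dominated by the coins. Track instead $r_t:=\min\{T,\sum_{s<t}Z_s\}$ and require $|\langle\vu_j,\rho_R(\vx^{(t)}/b)\rangle|\le 1/4$ for all $j>r_t$. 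On that event, Lemma~\ref{lem:mask-one-step} with $s=r_t$ shows that the $t$th response depends only on $\vu_1,\dots,\vu_{r_t+Z_t}$ and that $\operatorname{prog}_{1/4}(\vy^{(t)})\le r_t$. Your binomial tail bound then applies directly to $r_t$.

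The second is conditional uniformity. The unrevealed columns are not uniform given the history ``on the good event'': conditioning on earlier queries being uninformed biases exactly those columns. Stiefel symmetry alone does not undo this. The projection argument of \citet{carmon2020lower} (Lemma~4) handles the bias, but its dimension requirement carries an extra factor of the number of queries. Deferring to it therefore would not give the stated $d\ge T+32R^2\log(2T^2/(q\delta))$.

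What gives that bound is a coupling. Run $\mathsf A$ against an auxiliary oracle that uses $\Pi_k\big(\vg_T(\Pi_k(\vy);Z_t)\big)$ with $k=\min\{T,r_t+Z_t\}$. In that process $r_t$ depends only on the coins, so it is independent of $\mU$. The $t$th auxiliary query is then a function of the seed, the coins and $\vu_1,\dots,\vu_{r_t}$ alone. Consequently, each $\vu_j$ with $j>r_t$ is exactly uniform on the unit sphere of a subspace of dimension at least $d-T$. Your bound $2\exp(-(d-T)/(32R^2))\le q\delta/T^2$ is then valid. Summing over $j\le T$ and the at most $T/(2q)$ queries costs $\delta/2$; summing over $t\le T/q$, as you wrote, would give $\delta$. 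Finally, Lemma~\ref{lem:mask-one-step} shows the auxiliary and true responses coincide whenever the query is uninformed. So on the intersection of the two good events, the real trajectory equals the auxiliary one and $\operatorname{prog}_{1/4}(\vy^{(t)})\le r_t<T$.
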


Based on Lemma \ref{lem:randomized-hardness}, we establish the following $L_1$-dependent lower bound.

\begin{theorem}
\label{thm:L1-lower-bound}
For any given $p\in(1,2]$, there exists numerical constant $\hat c>0$ such that for all $L_0,L_1,\Delta,\sigma>0$, and~$\epsilon\leq {\hat c} L_1\Delta$, it holds
\begin{equation}
\begin{aligned}
    m_{\epsilon}^{\mathrm{rand}}
    (L_0,L_1,\Delta,\sigma,p)
    \ge \Omega\left(
     \bigg(\frac{\sigma^{\frac{p}{p-1}}}{\epsilon^{\frac{p}{p-1}}}+1\bigg)\frac{L_1\Delta}{\epsilon}\right).
\end{aligned}
\label{eq:L1-lower-bound}
\end{equation}
\end{theorem}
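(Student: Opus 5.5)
The plan is to instantiate the scaled hard instance $f_\mU$ of Lemma~\ref{lem:hard-scaling} together with the oracle $\mathsf O_{f_\mU}$ of equation~(\ref{eq:hard-stochastic-oracle}), with $q$ chosen as in Lemma~\ref{lem:oracle-validity}. The distribution over functions is induced by $\mU\sim\operatorname{Unif}(\operatorname{St}(d,T))$, with $d$ as in Lemma~\ref{lem:randomized-hardness}. The chain length will be $T:=\lfloor \Delta/(2a)\rfloor=\lfloor L_1\Delta/(15360\,\epsilon)\rfloor$.

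First, membership in the classes. By Lemma~\ref{lem:hard-scaling}(a,b), $f_\mU$ is $(L_0,L_1)$-smooth and $f_\mU(\vzero)-\inf f_\mU\le 2aT\le\Delta$, so every $f_\mU\in\fF(L_0,L_1,\Delta)$. By Lemma~\ref{lem:oracle-validity}, $\mathsf O_{f_\mU}\in\fO(\sigma,p)$. The oracle's distribution $\mathrm{Bernoulli}(q)$ does not depend on $\mU$, which is what the order $\sup_{\mathsf O}\sup_{P_f}$ in~(\ref{eq:distributional-complexity}) requires. The assumption $\epsilon\le\hat c L_1\Delta$ with $\hat c$ small, say $\hat c=1/(4\cdot 15360)$, guarantees $T\ge 4$ and $T\ge L_1\Delta/(30720\,\epsilon)$. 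Since $\log(2/\delta)$ must be absorbed into $T$, I take $\delta=1/2$ and, if needed, shrink $\hat c$ so that $T-\log 4\ge T/2$.

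Second, apply Lemma~\ref{lem:randomized-hardness} with $\delta=1/2$. For any $\mathsf A\in\fA_{\mathrm{rand}}$, with probability at least $1/2$ we have $\|\nabla f_\mU(\vx^{(t)})\|\ge 4\epsilon$ for all $t\le (T-\log 4)/(2q)$. Hence $\BE\|\nabla f_\mU(\vx^{(t)})\|\ge \tfrac12\cdot 4\epsilon=2\epsilon>\epsilon$ for every such $t$. The expectation here is over $\mU$, $Z$ and $r$, i.e.\ it is the distributional expectation in~(\ref{eq:distributional-complexity}). Therefore $m_\epsilon^{\mathrm{rand}}> (T-\log 4)/(2q)\ge T/(4q)$.

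Third, convert $1/q$ into the stated form. If $q=1$, then $\sigma^{p/(p-1)}\epsilon^{-p/(p-1)}\lesssim 1$, since $\sigma\lesssim\epsilon$ up to a $p$-dependent constant, and the bound is $\Omega(T)=\Omega(L_1\Delta/\epsilon)$. Otherwise $1/q=(\sigma^p/(2\cdot 3984^p\epsilon^p))^{1/(p-1)}=c_p\,\sigma^{p/(p-1)}\epsilon^{-p/(p-1)}$, where $c_p=(2\cdot3984^p)^{-1/(p-1)}$ depends only on $p$. In both cases $1/q\ge c_p'\,(\sigma^{p/(p-1)}\epsilon^{-p/(p-1)}+1)$, using $\max\{1,x\}\ge (1+x)/2$. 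Multiplying by $T/4$ gives~(\ref{eq:L1-lower-bound}), with constants depending on $p$; this is why the statement fixes $p$ first.

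The main obstacle is bookkeeping rather than ideas. The dimension $d$ must be allowed to be large, which is permitted since $d$ is free in the function class. The constants $\hat c$ and $\delta$ must be chosen so that $T\ge1$ and the $\log(2/\delta)$ loss is absorbed. The $q=1$ case needs separate handling to produce the additive ``$+1$''.
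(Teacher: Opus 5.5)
Your proposal is correct and follows the paper's proof essentially step for step. You use the same instance $f_{\mU}$ with $T=\lfloor L_1\Delta/(15360\epsilon)\rfloor$, apply Lemma~\ref{lem:randomized-hardness} with $\delta=1/2$, and use the bound $1/q=\max\{1,c_p(\sigma/\epsilon)^{p/(p-1)}\}\ge \tfrac{c_p}{2}\big((\sigma/\epsilon)^{p/(p-1)}+1\big)$. The one difference is cosmetic: you bound $m_\epsilon^{\mathrm{rand}}>(T-\log 4)/(2q)$ directly rather than through $N\ge (T-\log 4)/(2q)-1$, so $T\ge 4$ (i.e.\ $\hat c=1/61440$) suffices, whereas the paper takes $\hat c=1/122880$ to ensure $T\ge 8$.
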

Combining the results of Theorem~\ref{thm:L1-lower-bound} and the lower bound of standard smooth case \citep{zhang2020adaptive,liu2025nonconvex}, we obtain our main results for finding $\epsilon$-stationary points of $(L_0, L_1)$-smooth functions.

\begin{corollary}\label{cor:joint-lower-bound}
For any given $p\in(1,2]$,
there exists numerical constant $c>0$ such that for all $L_0,L_1,\Delta,\sigma>0$, and $\epsilon\leq  c  \min\{\sqrt{L_0\Delta}, L_1\Delta\}$, it holds
\begin{equation}
   m_{\epsilon}^{\mathrm{rand}}(L_0,L_1,\Delta,\sigma,p)
\ge \Omega\left(\bigg(\frac{\sigma^{\frac{p}{p-1}}}{\epsilon^{\frac{p}{p-1}}}+1\bigg)\bigg(\frac{L_0\Delta}{\epsilon^2}+\frac{L_1\Delta}{\epsilon}\bigg)\right).
\label{eq:joint-lower-bound}
\end{equation}
\end{corollary}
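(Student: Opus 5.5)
The corollary is the maximum of two lower bounds, each valid in its own regime. Since $\max\{A,B\}\ge (A+B)/2$, it suffices to prove that $m_{\epsilon}^{\mathrm{rand}}$ is at least a constant times each of
\begin{equation*}
\bigg(\frac{\sigma^{\frac{p}{p-1}}}{\epsilon^{\frac{p}{p-1}}}+1\bigg)\frac{L_1\Delta}{\epsilon}
\qquad\text{and}\qquad
\bigg(\frac{\sigma^{\frac{p}{p-1}}}{\epsilon^{\frac{p}{p-1}}}+1\bigg)\frac{L_0\Delta}{\epsilon^2}.
\end{equation*}
Averaging the two then gives \eqref{eq:joint-lower-bound}, with the constant in $\Omega(\cdot)$ halved.

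\textbf{The $L_1$ term.} Take $c\le \hat c$. Then $\epsilon\le c\min\{\sqrt{L_0\Delta},L_1\Delta\}\le \hat c L_1\Delta$, so Theorem~\ref{thm:L1-lower-bound} applies directly and yields the first bound.

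\textbf{The $L_0$ term.} I would import the standard-smooth lower bound under $p$-BCM noise \citep{zhang2020adaptive,liu2025nonconvex}. It states that for $L$-smooth functions with initial gap $\Delta$ and $\epsilon\le c'\sqrt{L\Delta}$, every randomized algorithm needs
\begin{equation*}
\Omega\Big(\big(\sigma^{\frac{p}{p-1}}\epsilon^{-\frac{p}{p-1}}+1\big)L\Delta\epsilon^{-2}
\Big)
\end{equation*}
oracle calls, where the oracle satisfies \eqref{eq:oracle-condition}. The key observation is that every $L_0$-smooth function in the classical sense is $(L_0,L_1)$-smooth for any $L_1\ge 0$. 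Indeed, $\|\nabla f(\vx)-\nabla f(\vy)\|\le L_0\|\vx-\vy\|\le (L_0+L_1\|\nabla f(\vx)\|)\|\vx-\vy\|$ for all pairs, and in particular for those with $\|\vx-\vy\|\le 1/L_1$. Hence that hard distribution, together with its oracle, lies in $\mathcal P[\mathcal F(L_0,L_1,\Delta)]\times\fO(\sigma,p)$. Because the complexity measure in \eqref{eq:distributional-complexity} takes a supremum over function distributions and oracles, it is at least the smooth-case complexity with $L=L_0$. Choosing $c\le c'$ guarantees $\epsilon\le c'\sqrt{L_0\Delta}$, which gives the second bound.

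\textbf{Main obstacle.} The delicate point is matching conventions, not the mathematics. The cited smooth lower bound must be stated in the same distributional, randomized-algorithm framework with an expected gradient norm criterion. It also must not require extra normalizations beyond the bounded gap at $\vzero$, though translating the initial point to $\vzero$ is harmless. If the cited result is phrased with a high-probability criterion or a different noise model, I would convert it. That conversion uses a Markov-type argument: a constant-probability event of gradient norm at least $2\epsilon$ forces the expectation above $\epsilon$. For the noise, bounded variance would be replaced by the $p$-BCM version of \citet{liu2025nonconvex}. Finally, I would set $c=\min\{\hat c,c'\}$.
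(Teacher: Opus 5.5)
Your proposal is correct and follows the paper's proof: the $L_1\Delta/\epsilon$ term comes directly from Theorem~\ref{thm:L1-lower-bound}, and the $L_0\Delta/\epsilon^2$ term is imported from the $p$-BCM smooth lower bound of \citet[Theorem 3.3]{liu2025nonconvex} via the inclusion $\mathcal F(L_0,0,\Delta)\subset\mathcal F(L_0,L_1,\Delta)$. The two bounds are then combined with $c=\min\{\hat c,\tilde c\}$, and your explicit step $\max\{A,B\}\ge (A+B)/2$ is exactly what the paper leaves implicit.
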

The SFO lower bound in Corollary \ref{cor:joint-lower-bound} connects several existing results as follows.
\begin{itemize}[leftmargin=0.65cm]
    \item In the case of $L_1=0$, our lower bound (\ref{eq:joint-lower-bound}) reduces to 
    \begin{align*}        
    \Omega\bigg(\bigg(\frac{\sigma^{\frac{p}{p-1}}}{\epsilon^{\frac{p}{p-1}}}+1\bigg)\frac{L_0\Delta}{\epsilon^2}\bigg),
    \end{align*}
    matching the optimal SFO complexity bound for standard smoothing optimization under $p$-BCM~condition \citep{zhang2020adaptive,liu2025nonconvex}.
    
    \item In the case of $\sigma=0$, our lower bound  (\ref{eq:joint-lower-bound}) reduces to
    \begin{align*}        
    \Omega\bigg(\frac{L_0\Delta}{\epsilon^2}+\frac{L_1\Delta}{\epsilon}\bigg),
    \end{align*}
    matching best-known upper complexity bounds on exact first-order oracle \citep{gorbunov2025methods,pmlr-v202-koloskova23a,pmlr-v267-tyurin25a,vankov2025optimizing}.
    In other words, this indicates existing deterministic first-order methods are optimal.
    
    \item To the best of our knowledge, there is no upper SFO bound matching lower bound (\ref{eq:joint-lower-bound}) for general $L_1>0$ and $\sigma>0$.
    In a recent work, \citet{liu2025nonconvex} studied mini-batch normalized SGD with momentum, achieving the SFO complexity of
    \begin{align*}
        \fO\bigg(\bigg(\frac{\sigma^{\frac{p}{p-1}}}{\epsilon^{\frac{p}{p-1}}}+1\bigg)\bigg(\frac{L_{0}\Delta}{\epsilon^{2}}+\frac{L_{1}\Delta}{\epsilon}\bigg) + \frac{\sigma^{\frac{2p-1}{p-1}}}{\epsilon^{\frac{2p-1}{p-1}}}\bigg).
    \end{align*} 
    However, the last term $\sigma^{\frac{2p-1}{p-1}}\epsilon^{-\frac{2p-1}{p-1}}$ does not match the lower bound even in the special case of~$p=2$.
\end{itemize}

\section{Sharp Analysis for Stochastic First-Order Methods}\label{sec:upper-bound}

In this section, we refine the analysis of the mini-batch normalized stochastic gradient descent and its smoothed variants to establish the tight SFO upper complexity bound.

\subsection{Normalized Stochastic Gradient Descent}

Normalized stochastic gradient descent (NSGD) and its variants are widely used to solve $(L_0, L_1)$-smooth optimization problems, as well as to address the heavy-tailed gradient noise \citep{luo2026decentralized,khirirat2024error,hubler2024gradient,liu2025nonconvex,chen2026stability,wu2026optimal,gorbunov2025methods}.
Somewhat surprisingly, no existing analysis has achieved the jointly optimal dependency on $L_0$, $L_1$, $\sigma$, and $\epsilon$, even for the bounded variance setting of $p=2$.

\begin{algorithm}[b]
	\caption{Smoothed Mini-Batch Normalized Stochastic Gradient Descent} \label{alg:sn-mb-sgd}
	\begin{algorithmic}[1]
		\STATE \textbf{Input:} $\vx_0\in\BR^d$, $\eta>0$, $\gamma\geq 0$, $b,T\in\BN$ \\[0.15cm]
		\STATE \textbf{for} $t = 0, 1, \dots, T-1$ \textbf{do}\\[0.15cm]
        \STATE \quad $Z_{t,1},\dots,Z_{t,b}\overset{\mathrm{i.i.d}}\sim \fZ$
		\STATE \quad $\displaystyle{\vg_t = \frac{1}{b}\sum_{i=1}^b  \vg(\vx_t;Z_{t,i})}$ \\[0.15cm]
        \STATE \quad $\vx_{t+1} = \vx_t - \dfrac{\eta \vg_t}{\|\vg_t\|+\gamma}$ \\[0.15cm]
		\STATE \textbf{end for} \\[0.15cm]
		\STATE \textbf{Output:} $\hat\vx\sim{\mathrm{Unif}}(\{\vx_0,\dots,\vx_{T-1}\})$
	\end{algorithmic}
\end{algorithm}

We present smoothed normalized mini-batch stochastic gradient descent in Algorithm \ref{alg:sn-mb-sgd}, which extends smoothed normalized gradient descent \citep{gorbunov2025methods} to the stochastic setting.
Note that by taking $\gamma=0$, Algorithm \ref{alg:sn-mb-sgd} reduces to  mini-batch normalized stochastic gradient descent \citep{hubler2024gradient}.
Here, we let $\vg/||\vg||=\vzero$ when $\vg=0$ for convention.
In practice, we typically set a small $\gamma>0$ to improve the stability by avoiding the amplification of small perturbations when the norm of (stochastic) gradient is close to zero.

\subsection{The Optimal Upper Complexity Bound}

The following theorem shows that Algorithm \ref{alg:sn-mb-sgd} with either $\gamma=0$ or $\gamma>0$ can find an $\epsilon$-stationary point of $(L_0,L_1)$-smooth function.

\begin{theorem}\label{thm:nsgd}
We run Algorithm \ref{alg:sn-mb-sgd} to solve problem (\ref{eq:problem}) with  $f\in\fF(L_0, L_1, \Delta)$ by using stochastic first-order oracle $\vg:\BR^d\times\fZ\to\BR^d$ such that     
\begin{align*}
\BE_Z[\vg(\vx;Z)]=\nabla f(\vx)
\qquad\text{and} \qquad
\BE_Z\left[\Norm{\vg(\vx;Z)-\nabla f(\vx)}^p\right] \leq \sigma^p
\end{align*}
for some $\sigma\geq 0$ and $p\in(1,2]$.
By taking parameters
\begin{align}\label{eq:nsgd-parameter}
\eta =\frac{\epsilon}{4(L_0+L_1\epsilon)},\quad
\gamma\in\left[0,\frac{\epsilon}{4}\right],\quad
b=\bigg\lceil
\bigg(\frac{32\sqrt2\,\sigma}{\epsilon}\bigg)^{\frac{p}{p-1}}+1
\bigg\rceil,
\quad \text{and}\quad
T=\left\lceil\frac{4\Delta}{\eta\epsilon}\right\rceil.
\end{align}
the output satisfies $\BE[\Norm{\nabla f(\hat\vx)}]\leq\epsilon$.
\end{theorem}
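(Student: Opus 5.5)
The proof is a one-step descent analysis of the normalized update, in which the mini-batch error is controlled through its first moment only. Write $\vg_t=\nabla f(\vx_t)+\vv_t$ and $\vu_t=\vg_t/(\Norm{\vg_t}+\gamma)$, so that $\Norm{\vx_{t+1}-\vx_t}\le\eta\le 1/(4L_1)<1/L_1$.

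\textbf{Step 1 (descent inequality).} By the standard consequence of $(L_0,L_1)$-smoothness along the segment (integrating the gradient difference, valid since the step is at most $1/L_1$),
\[
f(\vx_{t+1})\le f(\vx_t)-\eta\inner{\nabla f(\vx_t)}{\vu_t}+\tfrac{\eta^2}{2}\big(L_0+L_1\Norm{\nabla f(\vx_t)}\big).
\]

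\textbf{Step 2 (pointwise bound on the inner product).} I will use a deterministic inequality of the form
\[
\inner{\nabla f}{\vu}\ge \Norm{\nabla f}-\gamma-2\Norm{\vv}, \qquad \vu=\frac{\nabla f+\vv}{\Norm{\nabla f+\vv}+\gamma}.
\]
To justify it, split into two cases.
\begin{enumerate}[label=(\roman*),leftmargin=0.65cm]
\item If $\Norm{\vv}\ge\Norm{\nabla f}/2$, then $\inner{\nabla f}{\vu}\ge-\Norm{\nabla f}\ge\Norm{\nabla f}-2\Norm{\vv}$.
\item Otherwise, write $\inner{\nabla f}{\vg}=\Norm{\vg}^2-\inner{\vv}{\vg}\ge\Norm{\vg}(\Norm{\vg}-\Norm{\vv})$. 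Dividing by $\Norm{\vg}+\gamma$ and using $\Norm{\vg}\ge\Norm{\nabla f}-\Norm{\vv}$ yields the claim up to constants, after absorbing the $\gamma$ term via $\frac{\Norm{\vg}}{\Norm{\vg}+\gamma}\ge 1-\frac{\gamma}{\Norm{\vg}}$.
\end{enumerate}
The constants here may need adjusting (e.g.\ $3\Norm{\vv}$ or $2\gamma$), but any fixed constants are enough for what follows.

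\textbf{Step 3 (telescoping).} Plug Step 2 into Step 1 and use $\eta L_1\le 1/4$ to absorb $\tfrac{\eta^2}{2}L_1\Norm{\nabla f}\le\tfrac{\eta}{8}\Norm{\nabla f}$. Summing over $t$ and taking expectations gives
\[
\frac{7}{8}\cdot\frac1T\sum_t\BE\Norm{\nabla f(\vx_t)}\le \frac{\Delta}{\eta T}+\frac{\eta L_0}{2}+\gamma+2\max_t\BE\Norm{\vv_t}.
\]

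\textbf{Step 4 (bounding the mini-batch error).} This is the main obstacle, because for $p<2$ there is no variance available. I would invoke the von Bahr--Esseen type inequality for sums of independent mean-zero vectors in a Hilbert space:
\[
\BE\Big\Norm{\sum_{i}\xi_i\Big}^p\le 2\sum_i\BE\Norm{\xi_i}^p.
\]
This holds for $p\in(1,2]$ and can be proved by induction using $\Norm{a+b}^p\le\Norm{a}^p+p\Norm{a}^{p-2}\inner{a}{b}+2^{2-p}\Norm{b}^p$. Conditioning on $\vx_t$ and applying Jensen's inequality then gives
\[
\BE\Norm{\vv_t}\le 2^{1/p}\sigma\, b^{-(p-1)/p}.
\]
With the stated choice of $b$ this is at most $\sqrt2\,\sigma\cdot\epsilon/(32\sqrt2\,\sigma)=\epsilon/32$; the constant is clearly designed for this computation.

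\textbf{Step 5 (parameters).} With $T\ge 4\Delta/(\eta\epsilon)$ we get $\Delta/(\eta T)\le\epsilon/4$. Moreover $\eta L_0/2\le\epsilon/8$ and $\gamma\le\epsilon/4$. Altogether the average of $\BE\Norm{\nabla f(\vx_t)}$ is at most $\tfrac87\big(\tfrac14+\tfrac18+\tfrac14+\tfrac1{16}\big)\epsilon<\epsilon$. Since $\hat\vx$ is uniform over $\{\vx_0,\dots,\vx_{T-1}\}$, this gives $\BE\Norm{\nabla f(\hat\vx)}\le\epsilon$. If the constant in Step 2 turns out to be worse, the slack in $b$ and $T$ should be rebalanced accordingly.
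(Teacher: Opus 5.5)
Your plan follows the paper's proof. It uses the $(L_0,L_1)$ descent inequality for steps of length at most $1/L_1$ and the normalization bound $\inner{\nabla f}{\vu}\ge\Norm{\nabla f}-\gamma-2\Norm{\vv}$ (Lemma~\ref{lem:normalize}). It adds a first-moment bound on the mini-batch error and telescopes with the same bookkeeping ($\eta L_1\le 1/4$, $\eta L_0/2\le\epsilon/8$, $\Delta/(\eta T)\le\epsilon/4$). The only substantive difference is Step~4. You use a Hilbert-space von Bahr--Esseen inequality, which gives the constant $2^{1/p}$. The paper instead invokes Lemma~4.3 of \citet{liu2025nonconvex} plus Jensen (Lemma~\ref{lem:heavy-tailed}), which gives $2\sqrt2$. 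Either route works.

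There are three slips to fix.

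\textbf{Step 2(i).} The hypothesis $\Norm{\vv}\ge\Norm{\nabla f}/2$ does not give $-\Norm{\nabla f}\ge\Norm{\nabla f}-2\Norm{\vv}$; that would need $\Norm{\vv}\ge\Norm{\nabla f}$. As written, your case split only proves the bound with $4\Norm{\vv}$. No case split is needed. When $\Norm{\vg}+\gamma>0$ (the degenerate case is trivial),
\[
\inner{\nabla f}{\vu}=\frac{\Norm{\vg}^2-\inner{\vv}{\vg}}{\Norm{\vg}+\gamma}\ge\frac{\Norm{\vg}^2}{\Norm{\vg}+\gamma}-\Norm{\vv}\ge\Norm{\vg}-\gamma-\Norm{\vv}\ge\Norm{\nabla f}-\gamma-2\Norm{\vv},
\]
which is exactly the paper's argument.

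\textbf{Step 4.} For $p\le 2$ we have $2^{1/p}\ge\sqrt2$, not $\le\sqrt2$. So you only get $\BE\Norm{\vv_t}\le\sqrt2\,\epsilon/32$, not $\epsilon/32$.

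\textbf{Step 5.} The statement fixes $b$ and $T$, so ``rebalancing the slack'' is not available to you. Fortunately it is not needed. With the sharp constant $2$, the average is at most $\tfrac87\bigl(\tfrac14+\tfrac18+\tfrac14+\tfrac{\sqrt2}{16}\bigr)\epsilon<0.82\,\epsilon$. Even with the constant $4$ that your case split actually yields, it stays below $0.92\,\epsilon$. Either way the theorem follows.
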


For given $p\in(1,2]$, Theorem \ref{thm:nsgd} shows that both mini-batch normalized stochastic gradient descent (Algorithm~\ref{alg:sn-mb-sgd} with $\gamma=0$) and its smoothed variant (Algorithm \ref{alg:sn-mb-sgd} with $\gamma>0$) can attain the upper complexity bound of 
\begin{align*}
    bT = \fO\bigg(\bigg(\frac{\sigma^{\frac{p}{p-1}}}{\epsilon^{\frac{p}{p-1}}}+1\bigg)\bigg(\frac{L_0\Delta}{\epsilon^2}+\frac{L_1\Delta}{\epsilon}\bigg)\bigg),
\end{align*}
matching the lower bound provided by Corollary \ref{cor:joint-lower-bound}.

\section{Conclusion}\label{sec:conclusion}

This paper establish a SFO lower bound of $\Omega\big((\sigma^{\frac{p}{p-1}}\epsilon^{-\frac{p}{p-1}}+1)
(L_0\Delta\epsilon^{-2}+L_1\Delta\epsilon^{-1})\big)$ for finding $\epsilon$-stationary points of $(L_0, L_1)$-smooth functions.
We also analyze the convergence of mini-batch (smoothed) normalized SGD to achieve a matching upper complexity bound.
An interesting direction for future work is to extend our results to the distributed setting \citep{luo2026decentralized,jiang2025decentralized} to establish lower bounds on communication complexity.  

\section*{AI Disclosure}
The construction in the lower‑bound was discovered with the assistance of ChatGPT 5.5 Pro. 
The authors independently verified the arguments, and assume responsibility for all content.
 
\bibliographystyle{plainnat}
\bibliography{reference}

\appendix

\section{The Proofs for Results in Section \ref{sec:L0-L1-smooth}}

This section provides the proofs of our construction for $(L_0, L_1)$-smooth function.

\subsection{The Proof of Lemma~\ref{lem:gate-properties}}
\label{app:gate-properties}

\begin{proof}
Recall that the functions $\psi$ and $\phi$ are defined as
\begin{equation}
    \psi(t):=S(2t-1) \qquad \text{and}   \qquad   \phi(t):=1+\tanh(t).
    \label{eq:gate-functions1}
\end{equation}
respectively, where
\begin{equation}
    S(u)=
    \begin{cases}
        0, & u\le 0,\\
        10u^3-15u^4+6u^5, & u\in(0,1),\\
        1, & u\ge 1.
    \end{cases}
    \label{eq:gate-S1}
\end{equation}
We verify the first-order and second-order derivatives of $S$ as
\begin{align}\label{eq:S-prime}
S'(u) = \begin{cases}
30u^2(u-1)^2,  & \text{if}~u\in(0,1), \\
0, & \text{otherwise},
\end{cases}
\end{align}
and
\begin{align}\label{eq:S-second}
    S''(u) =\begin{cases}
    60u(u-1)(2u-1),  & \text{if}~u\in(0,1) \\
    0, & \text{otherwise}.
    \end{cases}
\end{align}

We now prove each statement of the lemma.

\paragraph{The proof of statement (a).} The definitions of $\psi$ and $S$ in equations (\ref{eq:gate-S1}) and (\ref{eq:S-second}) directly shows that~$\psi(t)=0$ for all $t \le 1/2$, and $\psi(t)=1$ for all $t\ge1$.

\paragraph{The proof of statement (b).}
We first consider the value of $\psi(t)$. 
By equation \eqref{eq:S-prime}, we know that $S'(t)\geq 0$ on $\BR$, which implies the function $S$ is non-decreasing.
Combining the facts $S(0)=0$ and $S(1)=1$, we obtain
$0\le S(u)\le1$ for all $u\in[0,1]$.
Combing the definitions $S(u)=0$ for all~$u\leq 0$ and $S(u)=1$ for all $u\geq 1$, we know that $S(u)\in[0,1]$ on $\BR$.
Hence, the definition of~$\psi(t)=S(2t-1)$ implies
\begin{equation*}%\label{eq:psi-num}
    0\le\psi(t)\le1.
\end{equation*}

We then consider the value of $\psi'(t)$. By the chain rule, we can write 

\begin{align}\label{eq:psi-dev}
\psi'(t) = \begin{cases}
 60(2t-1)^2(2t-2)^2,       &  t\in[ 1/2,1], \\
 0 & \text{otherwise}.
\end{cases}
\end{align}
Clearly, it holds $\psi'(t)\ge0$. 
For $t\in[1/2,1]$, we let
\[
u:=2t-1\in[0,1],
\]
which leads to
\begin{align*}
\psi'(t)= 60(2t-1)^2(2t-2)^2 = 60u^2(u-1)^2 .
\end{align*}
Since for all $u\in[0,1]$, it holds 
\[
    u(u-1) = \left(u-\frac12\right)^2 - \frac14
    \in \left[-\frac14, 0\right],
\]
which implies
\[
    \psi'(t)
    \le
    60\cdot\frac1{16}
    =
    \frac{15}{4}
    <4.
\]
Recall that $\psi'(t)=0$ for all $t\in[0, 1/2]$, 
we obtain $0\le\psi'(t)\le4$ for all $t\in\BR$.

We now consider the value of $\psi''(t)$. By the chain rule, we can write 
\[
    \psi''(t)=4S''(2t-1).
\]
Equation (\ref{eq:S-second}) indicates $\psi''(t)=0$ for all $t\not\in(1/2,1)$.
Hence, we focus on the case of~$t\in(1/2,1)$, that is $u=2t-1\in(0,1)$. 
We write
\begin{equation*}
    \psi''(t)=240u(1-u)(1-2u).
    %\label{eq:psi-second-transition}
\end{equation*}
and define
\[
    g(u):=u(1-u)(1-2u)
    =
    u-3u^2+2u^3.
\]
Then we have
\[
    g'(u)=1-6u+6u^2,
\]
which has critical points 
\[
    u_{+} = \frac{3+\sqrt3}{6}
    \qquad\text{and}\qquad
    u_{-} = \frac{3-\sqrt3}{6}.
\]
Since it holds $g(0)=g(1)=0$, the maximum of $|g(u)|$ on $[0,1]$ is attained at one of these critical points.
We can verify
\begin{align*}
    |g(u_+)| = |g(u_-)|=\frac{\sqrt3}{18},
\end{align*}
which implies
\[
    |\psi''(t)|
    \le
    240 \cdot \frac{\sqrt3}{18}
    <24.
\]

\paragraph{The proof of statement  (c).}
The fact $\tanh t\in(-1,1)$ directly implies 
\begin{align*}
\phi(t)=1+\tanh t\in(0,2).    
\end{align*}
We can also verify
$\phi'(t)=\operatorname{sech}^2t$ and $\phi''(t) = -2\operatorname{sech}^2(t)\tanh(t)$, which implies
\begin{align*}
    |\phi''(t)|
= 2\operatorname{sech}^2(t)|\tanh(t)|
< 2\operatorname{sech}^2(t)
= 2\phi'(t).
\end{align*}

\paragraph{The proof of statement  (d).}
The statement that $\phi'$ is even can be verifies as 
\begin{equation*}%\label{eq:phi-derivative-even}
    \phi'(-t)=\operatorname{sech}^2(-t)=\operatorname{sech}^2(t)=\phi'(t).
\end{equation*}
It remains to show that $\phi'(t)>2/5$ for $|t|\le1$.
For all $t\geq 0$, it holds
\[
    \phi''(t) =
    -2\operatorname{sech}^2(t)\,\tanh (t)    \le0,
\]
which implies $\phi'$ is non-increasing on $[0,\infty)$.
Therefore, for all $t\in[0,1]$, it holds
\begin{equation*}
    \phi'(t) =\operatorname{sech}^2(t)
    \ge \operatorname{sech}^2(1)
    > \frac{2}{5}.
    %\label{eq:phi-local-minimum}
\end{equation*}
Since $\phi'(t)$ is even, we have $|\phi'(t)|>2/5$ when $|t|\leq 1$.

\end{proof}

\subsection{Additional Properties of the Component Function}

We additionally establish the following useful properties for function $\psi$, which will be used in the subsequent analysis.

\begin{lemma}\label{lem:gate-paired-properties}
For every $t\in\BR$, the function $\psi$ defined in \eqref{eq:gate-functions} satisfies
\begin{align}
    \psi(t)+\psi(-t)
    &\le1,
    \label{eq:gate-pair-value}
    \\
    \psi'(t)+\psi'(-t)
    &\le4,
    \label{eq:gate-pair-first}
    \\
    |\psi''(t)|+|\psi''(-t)|
    &\le24.
    \label{eq:gate-pair-second}
\end{align}
\end{lemma}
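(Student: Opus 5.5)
The key fact I will use is that $\psi(t)$ and $\psi(-t)$ are never simultaneously in their transition regions. By Lemma~\ref{lem:gate-properties}(a), $\psi(s)=0$ for all $s\le 1/2$, and the derivative formula \eqref{eq:psi-dev} gives $\psi'(s)=0$ and $\psi''(s)=0$ for $s\notin(1/2,1)$. I will split on the sign of $t$. For any $t\in\BR$, at least one of $t$ and $-t$ is at most $0\le 1/2$. Without loss of generality I take $t\le 0$; otherwise I exchange the roles of $t$ and $-t$, since all three inequalities are symmetric under $t\mapsto -t$.

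With $t\le 0$, the point $t$ lies in $(-\infty,1/2]$, where $\psi$ vanishes identically. Hence $\psi(t)=0$. Because $\psi\equiv 0$ on the open set $(-\infty,1/2)$, we also get $\psi'(t)=\psi''(t)=0$ whenever $t<1/2$, which covers $t\le 0$. Each left-hand side in \eqref{eq:gate-pair-value}--\eqref{eq:gate-pair-second} therefore reduces to the single term evaluated at $-t$:
\begin{align*}
\psi(t)+\psi(-t) &= \psi(-t),\\
\psi'(t)+\psi'(-t) &= \psi'(-t),\\
|\psi''(t)|+|\psi''(-t)| &= |\psi''(-t)|.
\end{align*}

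The three bounds then follow from Lemma~\ref{lem:gate-properties}(b), which gives $\psi\le 1$, $0\le\psi'\le 4$ and $|\psi''|\le 24$ on all of $\BR$.

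The only delicate point is justifying that $\psi'$ and $\psi''$ vanish at $t\le0$. This holds because $t$ lies in the open set $(-\infty,1/2)$ on which $\psi$ is constant, so no issue arises at the breakpoints of $S$. Once this is checked, the remaining steps are immediate.
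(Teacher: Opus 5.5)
Your proof is correct and is essentially the paper's argument. The paper splits into $|t|\le 1/2$, $t>1/2$ and $t<-1/2$ rather than reducing to $t\le 0$ by symmetry, but in both versions one of $\pm t$ lies where $\psi,\psi',\psi''$ all vanish, and the remaining term is bounded by Lemma~\ref{lem:gate-properties}(b).
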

\begin{proof}
The expression of $\psi'(\cdot)$ in \eqref{eq:psi-dev} implies
\begin{equation}
    \psi'(t)=\psi''(t)=0
    \qquad
    \text{whenever } t\leq 1/2  \text{~~or~~}  t \geq 1.
    \label{eq:gate-pair-derivative-zero}
\end{equation}
We prove this lemma by considering the following three cases.

\paragraph{The Case of $|t|\le1/2$.}
The condition $|t|\le1/2$ implies both $t$ and $-t$ are at most $1/2$.
Hence, the definitions of $\psi(\cdot)$ and $S(\cdot)$ in equations (\ref{eq:gate-functions1}) and (\ref{eq:gate-S1}) implies $\psi(t)=\psi(-t)=0$. This results
\[
    \psi(t)+\psi(-t)=0\le1.
\]
Furthermore, \eqref{eq:gate-pair-derivative-zero} indicates
$\psi'(t)=\psi'(-t)=0$ and $\psi''(t)=\psi''(-t)=0$, which imply
\[
\psi'(t)+\psi'(-t)=0\le4
\qquad\text{and}\qquad
|\psi''(t)|+|\psi''(-t)|=0\le24.
\]

\paragraph{The Case of $t>1/2$.}
The condition $t>1/2$ implies $-t<-1/2$. Hence, the definition (\ref{eq:gate-functions1})  and \eqref{eq:gate-pair-derivative-zero} implies 
\begin{align*}
\psi(-t)=\psi'(-t)=\psi''(-t)=0.     
\end{align*}
Combining with Lemma~\ref{lem:gate-properties}(b), we obtain
\[
    \psi(t)+\psi(-t)=\psi(t)\le1,~~
    \psi'(t)+\psi'(-t)=\psi'(t)\le4,~~\text{and}~~
    |\psi''(t)|+|\psi''(-t)|=|\psi''(t)|\le24.
\]
\paragraph{The Case of $t<-1/2$.}
The condition $t<-1/2$ implies $-t>1/2$. Hence, the definition  (\ref{eq:gate-functions1}) and \eqref{eq:gate-pair-derivative-zero} implies
\begin{align*}
    \psi(t)=\psi'(t)=\psi''(t)=0.
\end{align*}
Combining with Lemma~\ref{lem:gate-properties}(b), we obtain
\[
\psi(t)+\psi(-t)=\psi(-t)\le1,~\psi'(t)+\psi'(-t)=\psi'(-t)\le4,~\text{and}~
|\psi''(t)|+|\psi''(-t)|=|\psi''(-t)|\le24.
\]

Hence, equations (\ref{eq:gate-pair-value}), (\ref{eq:gate-pair-first}), and (\ref{eq:gate-pair-second}) hold for all $t\in\BR$.
\end{proof}

\subsection{The Proof of Lemma~\ref{lem:hard-chain-properties}}
\label{app:hard-chain-properties}

\begin{proof}
Recall that the function $\bar f_T$ is defined as
\begin{equation}
\bar f_T(\vy) := \sum_{i=1}^{T}A_i(\vy),
~~~~\text{where}~~
A_i(\vy):=
\begin{cases}
-\phi(y_1), & \text{if}~i=1, \\[0.1cm]
\psi(-y_{i-1})\phi(-y_i)
-
\psi(y_{i-1})\phi(y_i), & \text{otherwise}.    
\end{cases}
\label{eq:hard-zero-chain1}
\end{equation}

\paragraph{The proof of statement (a).}
According to definition (\ref{eq:gate-functions1}) and Lemma~\ref{lem:gate-properties}(a), we have $\phi(0)=1$ and~$\psi(0)=0$.
Hence, definition (\ref{eq:hard-zero-chain1}) implies
\begin{equation}
    A_i(\vzero)=\begin{cases}
        -1, &  \text{if}~i=1, \\
        0, &  \text{otherwise}.
    \end{cases}
    \qquad\text{and}\qquad     
    \bar f_T(\vzero)=-1.
    \label{eq:hard-chain-origin}
\end{equation}
For all $\vy\in\BR^T$, Lemma~\ref{lem:gate-properties}(c) implies
\[
    A_1(\vy)=-\phi(y_1)>-2.
\]
Moreover, for every $i\ge2$, Lemma~\ref{lem:gate-properties}(b) and (c) implies
\[
\psi(-y_{i-1})\phi(-y_i)\ge0
\qquad\text{and}\qquad
0\le\psi(y_{i-1})\phi(y_i)<2.
\]
It follows from \eqref{eq:hard-zero-chain1}
that
\[
    A_i(\vy)>-2
    \qquad\text{and}\qquad
    \bar f_T(\vy) = \sum_{i=1}^T A_i(\vy)>-2T.
\]
Thus, it holds
\[
    \inf_{\vy\in\BR^T}\bar f_T(\vy)\ge-2T.
\]
We then consider the vector
\[
    \vy^{(r)}= r[1,\ldots,1]^\top,
    \qquad \text{where}~~r\ge1.
\]
Lemma~\ref{lem:gate-properties}(a) implies $\psi(r)=1$ and $\psi(-r)=0$.
Consequently, definition \eqref{eq:hard-zero-chain1} implies
\[
    A_i(\vy^{(r)})
    =
    -\phi(r).
\]
for all $i\in[T]$, which results
\begin{equation}
    \lim_{r\to+\infty} \bar f_T(\vy^{(r)}) = \lim_{r\to+\infty} -T\phi(r) = -2T.
    \label{eq:hard-chain-infimum}
\end{equation}
Combining equations (\ref{eq:hard-chain-origin})
and (\ref{eq:hard-chain-infimum}) yields
\begin{equation*}
    \bar f_T(\vzero)-\inf_{\vy\in\BR^T}\bar f_T(\vy)=2T-1.
\end{equation*}

\paragraph{The proof of statement (b).}
We first establish the uniform bound on the gradient of $\bar f_T$. Taking the derivative on \eqref{eq:hard-zero-chain1} and using the evenness of $\phi'$ from Lemma~\ref{lem:gate-properties}(d), we obtain
\small\begin{equation}\label{eq:hard-chain-gradient-coordinate}
\begin{split}    
\frac{\partial\bar f_T(\vy)}{\partial y_i} = 
\begin{cases}
    -\phi'(y_1) -\psi'(-y_1)\phi(-y_2) -\psi'(y_1)\phi(y_2), & i = 1, \\
    -\big(\psi(-y_{i-1})+\psi(y_{i-1})\big)\phi'(y_i) -\psi'(-y_i)\phi(-y_{i+1})-\psi'(y_i)\phi(y_{i+1}), & i=2,\dots,T-1, \\
    -\big(\psi(-y_{T-1})+\psi(y_{T-1})\big)\phi'(y_T), & i=T. \\
\end{cases}
\end{split}
\end{equation}

According to Lemma~\ref{lem:gate-properties}(b) and (c) and Lemma~\ref{lem:gate-paired-properties}, we have
\[
    0\le (\psi(-y_{i-1})+\psi(y_{i-1}))\phi'(y_i) \le1
~~~~~~\text{and}~~~~~~
    0\le
    \psi'(-y_i)\phi(-y_{i+1})
    +\psi'(y_i)\phi(y_{i+1})
    \le8.
\]
Combing with \eqref{eq:hard-chain-gradient-coordinate} implies
\begin{equation*}%\label{eq:hard-chain-gradient-bound}
    \Norm{\nabla\bar f_T(\vy)} 
= \sqrt{T} \max_{i\in[T]}\left|\frac{\partial\bar f_T}{\partial y_i}\right|
\le9\sqrt T.
\end{equation*}
We now consider the bound of Hessian. 
Lemma~\ref{lem:gate-properties}(c) implies
\[
    \left|\frac{\partial^2 A_1(\vy)}{\partial y_1^2}\right|
    =|\phi''(y_1)|\le2.
\]
For all $i\ge2$, Lemma~\ref{lem:gate-properties}(c) and \ref{lem:gate-paired-properties} imply

\begin{align}
& \left|\frac{\partial^2 A_i(\vy)}{\partial y_{i-1}^2}\right|
= |\psi''(-y_{i-1})\phi(-y_i)     -\psi''(y_{i-1})\phi(y_i)|
\le2(|\psi''(-y_{i-1})|+|\psi''(y_{i-1})|)
\le48, \label{eq:hard-A-xx-bound} \\[0.15cm] 
& \left|\frac{\partial^2 A_i(\vy)}{\partial y_i^2}\right|
= |-(\psi(-y_{i-1})+\psi(y_{i-1}))\phi''(y_i)| \le2, \label{eq:hard-A-zz-bound} \\[0.15cm]
& \left|\frac{\partial^2 A_i(\vy)}{\partial y_{i-1}\partial y_i}\right|
 = |(\psi'(-y_{i-1})-\psi'(y_{i-1}))\phi'(y_i)|
\le4.    \label{eq:hard-A-xz-bound}
\end{align}
Note that each $A_i(\vy)$ depends only on $y_{i-1}$ and $y_i$ for $i\ge2$, which implies the Hessian of $\bar f_T(\vy)$ is tridiagonal matrix, i.e.,
\begin{equation*}
    \frac{\partial^2\bar f_T(\vy)}{\partial y_i\partial y_j}=0
    ~~~\text{for all}~~~i,j\in[T]~~~\text{such that}~~~|i-j|\ge2.
\end{equation*}
According to equations (\ref{eq:hard-A-xx-bound}) and (\ref{eq:hard-A-zz-bound}), the absolute value of each diagonal entry of $\nabla^2\bar f_T(\vy)$ is bounded by $48+2=50$. 
Moreover, \eqref{eq:hard-A-xz-bound} implies that each nonzero off-diagonal entry of $\nabla^2\bar f_T(\vy)$ has absolute value at most $4$.
Hence, every row of $\nabla^2\bar f_T(\vy)$ has absolute row sum at most $50+2\cdot4=58$. 
Since the Hessian is symmetric, its spectral norm is bounded by its maximum absolute row sum, which implies
\begin{equation}
    \Norm{\nabla^2\bar f_T(\vy)}
    \le\Norm{\nabla^2\bar f_T(\vy)}_\infty
    \le58 %64.
    \label{eq:hard-chain-hessian-bound}
\end{equation}

\paragraph{The proof of statement (c).}
We let $i\in[T]$ be the smallest index such that $|y_i|\le1$. 
If $i=1$, then \eqref{eq:hard-chain-gradient-coordinate} and the nonnegativity of $\psi'$ and $\phi$ from Lemma~\ref{lem:gate-properties}(b) and (c) implies
\[
    \frac{\partial\bar f_T(\vy)}{\partial y_1}
    \le-\phi'(y_1).
\]
Combining with $|y_1|\le1$ and Lemma~\ref{lem:gate-properties}(d), we achieve
\[
    \left|\frac{\partial\bar f_T(\vy)}{\partial y_1}\right|
    \ge\phi'(y_1)>\frac25.
\]
If $i\ge2$, the definition of $i$ implies $|y_{i-1}|>1$. 
According to Lemma~\ref{lem:gate-properties}(a), we obtain
\begin{equation}
    \psi(-y_{i-1})+\psi(y_{i-1})=1.
    \label{eq:previous-gate-one}
\end{equation}
Substituting \eqref{eq:previous-gate-one} into \eqref{eq:hard-chain-gradient-coordinate} and using the non-negativity of $\psi'$ and $\phi$ from Lemma~\ref{lem:gate-properties}(b) and (c), we have
\[
    \frac{\partial\bar f_T(\vy)}{\partial y_i}
    \le-\phi'(y_i).
\]
Since $|y_i|\le1$, Lemma~\ref{lem:gate-properties}(d) implies
\[
    \left|\frac{\partial\bar f_T(\vy)}{\partial y_i}\right|
    \ge\phi'(y_i)>\frac25.
\]
Combing above two inequality, we obtain
\begin{equation}
    \Norm{\nabla\bar f_T(\vy)}
    \ge
    \left|\frac{\partial\bar f_T}{\partial y_i}\right|
    >
    \frac25.
    \label{eq:hard-chain-certificate}
\end{equation}

\paragraph{The proof of statement (d).}
We now establish the gradient-dependent upper bound for Hessian.
We first consider the case that there exists some $i\in[T]$ such that $|y_i|\le1$. 
By \eqref{eq:hard-chain-hessian-bound} and \eqref{eq:hard-chain-certificate}, we obtain
\begin{equation}
    \Norm{\nabla^2\bar f_T(\vy)}
    \le 58
    \leq 160\cdot\frac25
    \le160\Norm{\nabla\bar f_T(\vy)}.
    \label{eq:hard-chain-curvature-case-one}
\end{equation}
We then consider the case that $|y_i|>1$ holds for all $i\in[T]$. 
According to Lemma~\ref{lem:gate-properties}(a), we have
\[
    \psi(-y_i)+\psi(y_i)=1.
\]
Moreover, since both $y_i$ and $-y_i$ do not lie in the interval $(1/2,1)$, \eqref{eq:gate-pair-derivative-zero} implies
\[
    \psi'(-y_i)=\psi'(y_i)=0.
\]
Substituting the above two identities into \eqref{eq:hard-chain-gradient-coordinate}, we obtain
\begin{equation}
    \frac{\partial\bar f_T(\vy)}{\partial y_i}
    =-\phi'(y_i)
    \quad \text{for all}~~i\in[T].
    \label{eq:hard-chain-saturated-gradient}
\end{equation}
Hence, the Hessian of $\bar f_T(\vy)$ is diagonal and satisfies
\begin{equation*}
    \nabla^2\bar f_T(\vy)=-\operatorname{diag}\big(\phi''(y_1),\ldots,\phi''(y_T)\big).
    %\label{eq:hard-chain-saturated-hessian}
\end{equation*}
Consequently, Lemma~\ref{lem:gate-properties}(c) and \eqref{eq:hard-chain-saturated-gradient} imply
\begin{align}
    \Norm{\nabla^2\bar f_T(\vy)}=
    \max_{i\in[T]}|\phi''(y_i)|
    \le2\max_{i\in[T]}\phi'(y_i)
    \le2\Norm{\nabla\bar f_T(\vy)}
    \le160\Norm{\nabla\bar f_T(\vy)}.
    \label{eq:hard-chain-curvature-case-two}
\end{align}
Combining equations (\ref{eq:hard-chain-curvature-case-one}) and  (\ref{eq:hard-chain-curvature-case-two}), we conclude that
\begin{equation*}
    \Norm{\nabla^2\bar f_T(\vy)}
    \le160\Norm{\nabla\bar f_T(\vy)}
    \qquad
\end{equation*}
for all $\vy\in\BR^T$.

\paragraph{The proof of statement (e).}
Follow the notation defined in Section \ref{sec:Preliminaries}, we let
\begin{align*}
    \mathrm{prog}_{1/4}(\vy):=\max\{i\in\BN : |y_i| > 1/4\}.
\end{align*}
for given $\vy=[y_1, \dots, y_T]\in\BR^T$.
For each $i>\operatorname{prog}_{1/4}(\vy)+1$, we have 
\begin{align}\label{eq:y_i14}
|y_i| \le 1/4 \qquad\text{and}\qquad |y_{i-1}| \le 1/4.    
\end{align}
Combining Lemma~\ref{lem:gate-properties}(a) and equation \eqref{eq:gate-pair-derivative-zero}, \eqref{eq:y_i14} implies
\[
    \psi(-y_{i-1})=\psi(y_{i-1})=0  \qquad\text{and}\qquad     \psi'(-y_i)=\psi'(y_i)=0.
\]
Substituting the above results into \eqref{eq:hard-chain-gradient-coordinate}, we obtain
\[
    \frac{\partial\bar f_T(\vy)}{\partial y_i}=0
\]
for all $i>\operatorname{prog}_{1/4}(\vy)+1$.
Hence, it holds
\begin{equation*}
    \operatorname{prog}_0\big(\nabla\bar f_T(\vy)\big)
    \le\operatorname{prog}_{1/4}(\vy)+1.
\end{equation*}
\end{proof}

\subsection{The Proof of Lemma~\ref{lem:global-hard-properties}}
\label{app:global-hard}

Before proving Lemma~\ref{lem:global-hard-properties}, we first establish the following properties of the mapping 
\begin{equation}
    \rho_R(\vx)
    :=
    \frac{\vx}{\sqrt{1+\Norm{\vx}^2/R^2}}.
    \label{eq:radial-compression1}
\end{equation}

\begin{lemma}
\label{lem:radial-compression-properties}
For all $\vx\in\BR^d$, the mapping $\rho_R:\BR^d\to\BR^d$ defined in \eqref{eq:radial-compression} holds
\begin{align*}
    \Norm{\rho_R(\vx)}\le R, \qquad 
    \Norm{D\rho_R(\vx)}\le1, \qquad \text{and} \qquad 
    \Norm{D^2\rho_R(\vx)}\le\frac3R.
\end{align*}
\end{lemma}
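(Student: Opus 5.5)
We will compute the derivatives of $\rho_R$ explicitly and bound each term. Write $s(\vx):=(1+\Norm{\vx}^2/R^2)^{-1/2}$, so that $\rho_R(\vx)=s(\vx)\vx$.

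\textbf{The first bound.} It is immediate:
\begin{align*}
\Norm{\rho_R(\vx)}^2=\frac{\Norm{\vx}^2}{1+\Norm{\vx}^2/R^2}=\frac{R^2\Norm{\vx}^2}{R^2+\Norm{\vx}^2}\le R^2 .
\end{align*}

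\textbf{The first derivative.} Using $\nabla s(\vx)=-s(\vx)^3\vx/R^2$, we get
\begin{align*}
D\rho_R(\vx)[\vh]=s\vh-\frac{s^3}{R^2}\inner{\vx}{\vh}\vx .
\end{align*}
Hence $D\rho_R(\vx)=s\big(\mI-\frac{s^2}{R^2}\vx\vx^\top\big)$, which is symmetric. Its eigenvalues are:
\begin{itemize}
\item $s$ on $\vx^\perp$;
\item $s\big(1-s^2\Norm{\vx}^2/R^2\big)=s^3$ along $\vx$, since $1-s^2\Norm{\vx}^2/R^2=s^2$.
\end{itemize}
Both lie in $(0,1]$ because $s\le 1$, so $\Norm{D\rho_R(\vx)}\le1$.

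\textbf{The second derivative.} Differentiating once more, with $\nabla(s^3)=-3s^5\vx/R^2$, gives
\begin{align*}
D^2\rho_R(\vx)[\vh,\vk]=-\frac{s^3}{R^2}\Big(\inner{\vx}{\vk}\vh+\inner{\vx}{\vh}\vk+\inner{\vh}{\vk}\vx\Big)+\frac{3s^5}{R^4}\inner{\vx}{\vh}\inner{\vx}{\vk}\vx .
\end{align*}
For unit $\vh,\vk$, the crude triangle-inequality bound is
\begin{align*}
\frac{3s^3\Norm{\vx}}{R^2}+\frac{3s^5\Norm{\vx}^3}{R^4}.
\end{align*}
Set $u=\Norm{\vx}/R$, so that $s=(1+u^2)^{-1/2}$. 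The bound becomes $\frac{3}{R}\big(\frac{u}{(1+u^2)^{3/2}}+\frac{u^3}{(1+u^2)^{5/2}}\big)$. Since $\frac{u}{(1+u^2)^{3/2}}\le\frac{u}{(1+u^2)^{1/2}}\cdot\frac{1}{1+u^2}$, the sum is
\begin{align*}
\frac{u}{(1+u^2)^{3/2}}\Big(1+\frac{u^2}{1+u^2}\Big)=\frac{u(1+2u^2)}{(1+u^2)^{5/2}} .
\end{align*}
We must show this is at most $1$. Squaring, we need $u^2(1+2u^2)^2\le(1+u^2)^5$. Expanding, the left side is $u^2+4u^4+4u^6$, while the right side contains $5u^2+10u^4+10u^6$ plus positive terms. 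So the inequality holds, giving $\Norm{D^2\rho_R(\vx)}\le 3/R$.

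\textbf{Main obstacle.} The only delicate point is the last scalar inequality, which must hold uniformly in $u\ge0$ with constant exactly $3$. The crude triangle-inequality bound already suffices once the coefficient expansion is compared. If a tighter constant were needed, one would instead decompose $\vh,\vk$ into components along $\vx$ and orthogonal to it and maximize exactly, but that is unnecessary here.
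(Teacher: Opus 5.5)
Your proof is correct, but it takes a different route from the paper. The paper never computes $D^2\rho_R$. It imports $\Norm{\rho_R(\vx)}\le R$, $\Norm{D\rho_R(\vx)}\le 1$, and the Lipschitz estimate $\Norm{D\rho_R(\vx)-D\rho_R(\vy)}\le \frac{3}{R}\Norm{\vx-\vy}$ from Lemma~13 of \citet{arjevani2023lower}. It then gets $\Norm{D^2\rho_R(\vx)}\le 3/R$ by writing $D^2\rho_R(\vx)[\vh,\vk]$ as the limit of the difference quotients $\big(D\rho_R(\vx+t\vk)-D\rho_R(\vx)\big)\vh/t$.

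You instead derive the Jacobian and the second-derivative tensor explicitly. You read off the Jacobian's eigenvalues $s$ and $s^3$, and you reduce the second-derivative bound to the scalar inequality $u^2(1+2u^2)^2\le(1+u^2)^5$, which you verify coefficient by coefficient. All of these computations check out.

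The paper's argument is shorter, but it rests entirely on the external lemma. Yours is self-contained and also proves the first two bounds, which the paper only cites. It also shows the constant $3$ has slack: $u(1+2u^2)/(1+u^2)^{5/2}$ peaks at roughly $0.54$, so you actually get about $1.6/R$.

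One cosmetic point: the step you present as an inequality, $\frac{u}{(1+u^2)^{3/2}}\le\frac{u}{(1+u^2)^{1/2}}\cdot\frac{1}{1+u^2}$, is an identity and is not needed for the factorization that follows.
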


\begin{proof}
Following \citet[Lemma 13 and its proof]{arjevani2023lower}, we directly obtain
\begin{equation}
    \Norm{\rho_R(\vx)}\le R
    %\label{eq:radial-image-bound}
    \qquad\text{and}\qquad
    \Norm{D\rho_R(\vx)}\le1.
    \label{eq:radial-jacobian-bound}
\end{equation}
Here, we focus on the upper bound of $\Norm{D^2\rho_R(\vx)}$. We denote
\begin{equation}
    \alpha(\vx):=
    \left(1+\frac{\Norm{\vx}^2}{R^2}\right)^{-1/2}.
    \label{eq:radial-alpha}
\end{equation}
Then we can write $\rho_R(\vx)=\alpha(\vx)\vx$ and it holds  \citep[Lemma~13]{arjevani2023lower} 
\begin{equation}
    D\rho_R(\vx)
    = \alpha(\vx)\mI_d - \frac{(\alpha(\vx))^3}{R^2}\cdot\vx\vx^\top.
    \label{eq:radial-jacobian}
\end{equation}
According to \citet[Lemma~13]{arjevani2023lower}, it holds
\begin{equation}
    \Norm{D\rho_R(\vx)-D\rho_R(\vy)}
    \le\frac3R\Norm{\vx-\vy}
    \qquad\text{for all }\vx,\vy\in\BR^d.
    \label{eq:radial-jacobian-lipschitz}
\end{equation}
Since the mapping $\rho_R$ is twice continuously differentiable, for any $\vh,\vk\in\BR^d$, we have
\begin{align*}
    \Norm{D^2\rho_R(\vx)[\vh,\vk]}
    =
    \lim_{t\to0}
    \left\|
    \frac{
    \big(D\rho_R(\vx+t\vk)-D\rho_R(\vx)\big)\vh
    }{t}
    \right\|\le
    \frac3R\Norm{\vh}\Norm{\vk},
    %\label{eq:radial-second-directional-bound}
\end{align*}
where the inequality follows from \eqref{eq:radial-jacobian-lipschitz}. Taking the supremum over all unit vectors $\vh$ and $\vk$, we achieve
\begin{equation}
    \Norm{D^2\rho_R(\vx)}
    \le\frac3R.
    \label{eq:radial-second-bound}
\end{equation}
This completes the proof.
\end{proof}

\subsection{The Proof of Lemma~\ref{lem:global-hard-properties}}
\label{app:global-hard-properties}

\begin{proof}
Recall that \eqref{eq:global-hard-function} has defined
\[
    H_{\mU,\mu}(\vx)
    =
    \bar f_T\big(\mU^\top\rho_R(\vx)\big)
    +\frac{\mu}{2}\Norm{\vx}^2
    \quad \text{with} \quad
    R=\frac{1000\sqrt T}{\mu}
    \quad\text{and}\quad
    \mu\in\left(0,\frac{1}{100}\right],
\]
where $\mU=[\vu_1,\ldots,\vu_T]\in\operatorname{St}(d,T)$. 
Throughout the proof of this lemma, we denote
\begin{align*}
\vy:=\mU^\top\rho_R(\vx).    
\end{align*}
and recall from \eqref{eq:radial-alpha} that $\rho_R(\vx)=\alpha(\vx)\vx$. By Lemma~\ref{lem:hard-chain-properties}(b), we have
\begin{equation}
    \Norm{\nabla\bar f_T(\vy)}
    \le9\sqrt T % \le10\sqrt T,
    \qquad \text{and} \qquad
    \Norm{\nabla^2\bar f_T(\vy)}\le 58.
    \label{eq:global-chain-bounds}
\end{equation}

\paragraph{The optimality gap.}
Note that \eqref{eq:radial-compression1} implies $\rho_R(\vzero)=\vzero$. 
Combining with \eqref{eq:hard-chain-origin}, we obtain
\begin{align*}
H_{\mU,\mu}(\vzero)=\bar f_T(\vzero)=-1.     
\end{align*}
Moreover, the lower bound of $\bar f_T(\vy)\ge-2T$ in \eqref{eq:hard-chain-infimum} implies
\[
    H_{\mU,\mu}(\vx)
    \ge\bar f_T(\mU^\top\rho_R(\vx))
    \ge-2T
    \qquad\text{for all }\vx\in\BR^d.
\]
Combining above two results, we have
\begin{equation*}
    H_{\mU,\mu}(\vzero)
    -\inf_{\vx\in\BR^d}H_{\mU,\mu}(\vx)
    \le -1+2T   \le2T.
\end{equation*}

\paragraph{The lower bound of the Hessian.}
According to the chain rule and \eqref{eq:radial-jacobian}, the gradient of~$H_{\mU,\mu}$ has the form of
\begin{equation}
    \nabla H_{\mU,\mu}(\vx)
    =
    D\rho_R(\vx)\mU\nabla\bar f_T(\vy)
    +\mu\vx.
    \label{eq:global-gradient-formula}
\end{equation}
Since we have assumed there exists some $i\in[T]$ such that
$|y_i|=\left| [\mU^\top\rho_R(\vx)]_i\right|\le1$, 
we denote
\begin{equation*}
    i_\star:=\min\{i\in[T]:|y_i|\le1\}.
    %\label{eq:first-unfinished-global}
\end{equation*}
%We first establish a lower bound on the magnitude of the $i_\star$th partial derivative of $\bar f_T$. 
If $i_\star=1$, then \eqref{eq:hard-chain-gradient-coordinate} and the non-negativity of $\psi'$ and $\phi$ from Lemma~\ref{lem:gate-properties}(b) and (c) imply
\begin{align}\label{eq:d-f-1-phi}    
    \frac{\partial\bar f_T(\vy)}{\partial y_1}
    \le-\phi'(y_1).
\end{align}
If $i_\star\ge2$, the minimality of $i_\star$ gives $|y_{i_\star-1}|>1$. 
Hence, Lemma~\ref{lem:gate-properties}(a) implies
\begin{align*}
\psi(-y_{i_\star-1})+\psi(y_{i_\star-1})=1.
\end{align*}
Substituting the above result into \eqref{eq:hard-chain-gradient-coordinate} also yields \eqref{eq:d-f-1-phi}. 
Thus, Lemma~\ref{lem:gate-properties}(d) implies
\begin{equation}
    \frac{\partial\bar f_T(\vy)}{\partial y_{i_\star}}
    \le-\phi'(y_{i_\star})
    <-\frac25.
    \label{eq:global-unfinished-gradient}
\end{equation}
In the case of $\Norm{\vx}\ge20\sqrt T/\mu$, we apply \eqref{eq:global-gradient-formula}, (\ref{eq:radial-jacobian-bound}), and \ref{eq:global-chain-bounds} to achieve
\begin{align}\label{eq:global-large-radius-lower}
\begin{split}  
    \Norm{\nabla H_{\mU,\mu}(\vx)}
    &\ge 20\sqrt{T}
    -\Norm{D\rho_R(\vx)\mU\nabla\bar f_T(\vy)} \\
    &\ge 20\sqrt{T}
    -\Norm{\nabla\bar f_T(\vy)} \\
    &\ge  20\sqrt{T} -9\sqrt T,    \\
    &\ge  11.   
\end{split}
\end{align}
In the case of $\Norm{\vx}<20\sqrt T/\mu$. The setting $R=1000\sqrt T/\mu$ implies $\Norm{\vx}/R<1/50$. Therefore, \eqref{eq:radial-alpha} implies
\begin{equation}
    \alpha(\vx)>
    \left(1+\frac1{2500}\right)^{-1/2}
    >\frac{99}{100}
    \qquad \text{and} \qquad
    \frac1{\alpha(\vx)}<\frac{101}{100}.
    \label{eq:global-alpha-near}
\end{equation}
Let $\vu_{i_\star}\in\BR^T$ be the $i_\star$th column of $\mU$.
Recall that $\mU^\top\mU=\mI_T$, we then apply equations (\ref{eq:global-gradient-formula}) and~(\ref{eq:radial-jacobian}) to~give
\begin{align}
    \langle\vu_{i_\star},\nabla H_{\mU,\mu}(\vx)\rangle
    = 
    \frac{\alpha(\vx)\partial\bar f_T(\vy)}{\partial y_{i_\star}}
    - \frac{(\alpha(\vx))^3}{R^2}
    \langle\vu_{i_\star},\vx\rangle
    \langle\vx,\mU\nabla\bar f_T(\vy)\rangle
    + \mu\langle\vu_{i_\star},\vx\rangle.
    \label{eq:global-directional-gradient}
\end{align}
Note that the definition of $i_\star$ indicates  $y_{i_\star}=\alpha(\vx)\langle\vu_{i_\star},\vx\rangle$ and $|y_{i_\star}|\le1$, which results
\begin{equation}
    |\langle\vu_{i_\star},\vx\rangle|
    \le\frac1{\alpha(\vx)}.
    \label{eq:global-coordinate-bound}
\end{equation}
For the second term on the right-hand side of inequality (\ref{eq:global-alpha-near}), we have
\begin{align}\label{eq:global-compression-error}
\begin{split}    
& \frac{(\alpha(\vx))^3}{R^2}
\left| \langle\vu_{i_\star},\vx\rangle
\langle\vx,\mU\nabla\bar f_T(\vy)\rangle \right|  
\le \frac{(\alpha(\vx))^3}{R^2}
\left| \langle\vu_{i_\star},\vx\rangle \right|
\left\| \vx\right\| \left\|\mU\nabla\bar f_T(\vy) \right\|  \\
\le & \frac{(\alpha(\vx))^2\left\| \vx\right\| }{R^2}
\left\|\mU\nabla\bar f_T(\vy) \right\| 
%\\ %\leq & \frac{(\alpha(\vx))^2}{R^2} \Norm{\vx}\Norm{\nabla\bar f_T(\vy)}
\le \frac{9\sqrt T\Norm{\vx}}{R^2}
\leq \frac{9\sqrt T(20\sqrt T/\mu)}{(1000\sqrt T/\mu)^2}
=\frac{\mu}{5000}\le \frac{2}{10^{6}}.
\end{split}
\end{align}
where the first inequality is based on Cauchy--Schwarz inequality,
the second inequality is based on \eqref{eq:global-coordinate-bound},
the third inequality is based on equation (\ref{eq:global-chain-bounds}) and the definition (\ref{eq:radial-alpha}),
and the last inequality is based on the condition $\Norm{\vx}<20\sqrt T/\mu$ and the setting $R=1000\sqrt T/\mu$.
For the last term on the right-hand side of inequality (\ref{eq:global-alpha-near}), \eqref{eq:global-alpha-near} and the setting $\mu\le10^{-2}$ implies
\begin{align}\label{eq:global-regularizer-error}
    \mu|\langle\vu_{i_\star},\vx\rangle|
    &\le\frac{\mu}{\alpha(\vx)}
    <\frac{101}{100}\mu \leq \frac{101}{10000}
\end{align}
Combining equations (\ref{eq:global-unfinished-gradient}), (\ref{eq:global-compression-error}), (\ref{eq:global-regularizer-error}), and (\ref{eq:global-alpha-near}) with \eqref{eq:global-directional-gradient}, we obtain
\begin{align*}
    \left|
    \langle\vu_{i_\star},\nabla H_{\mU,\mu}(\vx)\rangle
    \right|
    \ge \frac{2\alpha(\vx)}5 - 2\times10^{-6} -\frac{101}{10000}
    > \frac25\cdot\frac{99}{100}    
    -2\times10^{-6} -\frac{101}{10000} 
    > \frac14.
    %\label{eq:global-directional-lower}
\end{align*}
Since we have $\Norm{\vu_{i_\star}}=1$, the Cauchy--Schwarz inequality yields
\begin{equation}
    \Norm{\nabla H_{\mU,\mu}(\vx)}
    \ge
    \left|
    \langle\vu_{i_\star},\nabla H_{\mU,\mu}(\vx)\rangle
    \right|
    >\frac14.
    \label{eq:global-gradient-small-radius}
\end{equation}

Combining \eqref{eq:global-large-radius-lower} and \eqref{eq:global-gradient-small-radius}, we conclude that whenever there exists some $i\in[T]$ such that $|[\mU^\top\rho_R(\vx)]_i|\le1$, it holds $\Norm{\nabla H_{\mU,\mu}(\vx)}\ge 1/4$.

\paragraph{The upper bound for the Hessian.}
We now establish the gradient-dependent upper bound for the Hessian. 
For all vectors $\vh,\vk\in\BR^d$, the chain rule implies
\begin{align}
\begin{split}\label{eq:global-second-chain-rule}    
    D^2H_{\mU,\mu}(\vx)[\vh,\vk]
&=  \left\langle
    \nabla^2\bar f_T(\vy)
    \mU^\top D\rho_R(\vx)[\vh],
    \mU^\top D\rho_R(\vx)[\vk]
    \right\rangle \\
&~~\quad + \left\langle
    \mU\nabla\bar f_T(\vy),
    D^2\rho_R(\vx)[\vh,\vk]
    \right\rangle + \mu\langle\vh,\vk\rangle.
\end{split}
\end{align}
Taking the supremum on $\vh$ and $\vk$ over the unit ball and and using the fact $\Norm{\mU}=1$, we obtain
\begin{align}\label{eq:global-uniform-hessian}
\begin{split}    
     \Norm{\nabla^2H_{\mU,\mu}(\vx)}
\le &{} \Norm{\nabla^2\bar f_T(\vy)}\Norm{D\rho_R(\vx)}^2+\Norm{\nabla\bar f_T(\vy)}\Norm{D^2\rho_R(\vx)}+\mu \\
\le &{}  58+\frac{27\sqrt T}{R}+\mu 
\leq 58+\frac{1027\mu}{1000}  < 59,
\end{split}
\end{align}
where the second inequality is based on equations (\ref{eq:global-chain-bounds}), (\ref{eq:radial-jacobian-bound}), and (\ref{eq:radial-second-bound}), the second last two inequalities are based on the settings $R=1000\sqrt T/\mu$ and $\mu\le10^{-2}$.

If there exists some $i\in[T]$ such that $|y_i|\le1$.
According to \eqref{eq:global-hard-gradient} from Lemma \ref{lem:global-hard-properties}, we have $\Norm{\nabla H_{\mU,\mu}(\vx)}\ge1/4$. Combining this with \eqref{eq:global-uniform-hessian} and the setting $\mu<1$ yields
\begin{equation}
    \Norm{\nabla^2H_{\mU,\mu}(\vx)}
    <59
    <240\cdot\frac14
    \le240\left(
    \Norm{\nabla H_{\mU,\mu}(\vx)}
    +\mu\log\frac1\mu
    \right).
    \label{eq:global-curvature-case-one}
\end{equation}

We then focus on the case that $|y_i|>1$ holds for all $i\in[T]$.
If $\Norm{\vx}\ge20\sqrt T/\mu$, then equations~(\ref{eq:global-uniform-hessian}) and (\ref{eq:global-large-radius-lower}) imply
\begin{align}
    \Norm{\nabla^2H_{\mU,\mu}(\vx)}
    &<59\le 240\cdot 11
    \le 240 \Norm{\nabla H_{\mU,\mu}(\vx)}\leq 240\left(\Norm{\nabla H_{\mU,\mu}(\vx)}+\mu\log \frac1\mu
    \right). \label{eq:global-curvature-case-two}        
\end{align}
If $\Norm{\vx}<20\sqrt T/\mu$, then \eqref{eq:global-alpha-near} gives
\begin{align*}%\label{eq:alpha-ul}
    \alpha(\vx)>\frac{99}{100} \qquad\text{and}\qquad 
    \frac{1}{\alpha(\vx)}<\frac{100}{99}.
\end{align*}
The condition $|y_i|>1$ holds for all $i\in[T]$ indicates that we can combine \eqref{eq:hard-chain-saturated-gradient} and  Lemma~\ref{lem:gate-properties}(c) to achive
\begin{equation}
    \Norm{\nabla^2\bar f_T(\vy)}
    =
    \max_{i\in[T]}|\phi''(y_i)|
    \le2\max_{i\in[T]}\phi'(y_i).
    \label{eq:global-completed-hessian-bound}
\end{equation}
We denote the index $j\in[T]$ such that $\phi'(y_j)=\max_{i\in[T]}\phi'(y_i)$ and analyze the right-hand side of inequality (\ref{eq:global-completed-hessian-bound}) by the following two cases.
\begin{enumerate}
\item If $|y_j|\ge\log(1/\mu)$, then Lemma~\ref{lem:gate-properties}(c) implies
\begin{equation}
    \max_{i\in[T]}\phi'(y_i)
    =
    \operatorname{sech}^2(y_j)
    \le4\mathrm e^{-2|y_j|}
    \le4\mu^2
    \le\mu\log\frac1\mu,
    \label{eq:global-phi-large-coordinate}
\end{equation}
where the first inequality follows from $\cosh t\ge\mathrm e^{|t|}/2$, and the last inequality uses $\mu\le10^{-2}$.
\item If $|y_j|<\log(1/\mu)$, then the identity $y_j=\alpha(\vx)\langle\vu_j,\vx\rangle$ implies
\begin{equation}
    |\langle\vu_j,\vx\rangle|
    <
    \frac{\log(1/\mu)}{\alpha(\vx)}.
    \label{eq:global-log-coordinate}
\end{equation}
Following the derivation of \eqref{eq:global-directional-gradient} by replacing $\vu_{i_\star}$ with $\vu_{j}$, we obtain
\begin{align*}
    \langle \vu_j ,\nabla H_{\mU,\mu}(\vx)\rangle  
    &=\frac{\alpha(\vx)\partial\bar f_T(\vy)}{\partial y_{j}}-\frac{(\alpha(\vx))^3}{R^2}\langle\vu_j,\vx\rangle\langle\vx,\mU\nabla\bar f_T(\vy)\rangle
    +\mu\langle\vu_j,\vx\rangle \\
    &=-\alpha(\vx)\phi'(y_j)-\frac{(\alpha(\vx))^3}{R^2}\langle\vu_j,\vx\rangle\langle\vx,\mU\nabla\bar f_T(\vy)\rangle
    +\mu\langle\vu_j,\vx\rangle,
\end{align*}
where the last step is based on \eqref{eq:hard-chain-saturated-gradient}.
Since we have $\phi'(y_j)=\max_{i\in[T]}\phi'(y_i)$ and~$\Norm{\vu_j}=1$, applying the triangle inequality on above equation implies
\begin{align}\label{eq:alpha-max-phi-yi}
    \alpha(\vx)\max_{i\in[T]}\phi'(y_i)\le\Norm{\nabla H_{\mU,\mu}(\vx)}+\frac{(\alpha(\vx))^3}{R^2}|\langle\vu_j,\vx\rangle|
    \left|\langle\vx,\mU\nabla\bar f_T(\vy)\rangle\right| + \mu|\langle\vu_j,\vx\rangle|.
\end{align}
For the second term on the right-hand side of \eqref{eq:alpha-max-phi-yi},
applying Cauchy--Schwarz inequality, \eqref{eq:global-chain-bounds} with the fact $\mU^\top\mU=\mI_T$ imply
\begin{align*}
|\langle\vx,\mU\nabla\bar f_T(\vy)\rangle|
\le \Norm{\vx}\Norm{\nabla\bar f_T(\vy)}\le9\sqrt T\Norm{\vx}.    
\end{align*}
Combining with \eqref{eq:global-log-coordinate}, we obtain
\begin{align}\label{eq:alpha-max-phi-yi-1}
\begin{split}    
   & \frac{(\alpha(\vx))^3}{R^2}|\langle\vu_j,\vx\rangle|\left|\langle\vx,\mU\nabla\bar f_T(\vy)\rangle\right| \\
\le & \frac{9(\alpha(\vx))^2\sqrt T\Norm{\vx}\log(1/\mu)}{R^2} \\
< & \frac{9\mu\log(1/\mu)}{50000}<\frac{\mu\log(1/\mu)}{5000},
\end{split}
\end{align}
where the second inequality follows from $\alpha(\vx)\le1$, $\Norm{\vx}<20\sqrt T/\mu$, and $R=1000\sqrt T/\mu$.
Substituting equations (\ref{eq:alpha-max-phi-yi-1}) and (\ref{eq:global-log-coordinate}) into \eqref{eq:alpha-max-phi-yi}, we obtain
\begin{align}
    \alpha(\vx)\max_{i\in[T]}\phi'(y_i)
    \le
    % \Norm{\nabla H_{\mU,\mu}(\vx)}
    % +\frac{\mu\log(1/\mu)}{5000} +\frac{\mu\log(1/\mu)}{\alpha(\vx)}
    % \nonumber\\
    % &=
    \Norm{\nabla H_{\mU,\mu}(\vx)}
    +
    \left(\frac1{5000}+\frac1{\alpha(\vx)}
    \right)\mu\log\frac1\mu.
    \label{eq:global-phi-start-bound}
\end{align}
Dividing \eqref{eq:global-phi-start-bound} by $\alpha(\vx)>0$ and applying \eqref{eq:global-alpha-near}, we obtain
\begin{align}\label{eq:global-phi-small-coordinate}
\begin{split}    
    \max_{i\in[T]}\phi'(y_i)
    &\le \frac1{\alpha(\vx)}
    \Norm{\nabla H_{\mU,\mu}(\vx)}+
    \left(\frac1{5000\alpha(\vx)} + \frac1{(\alpha(\vx))^2} \right)\mu\log\frac1\mu \\
    &\le 2\left(
    \Norm{\nabla H_{\mU,\mu}(\vx)}
    +\mu\log\frac1\mu
    \right).
\end{split}
\end{align}
\end{enumerate}
Combining equations (\ref{eq:global-phi-large-coordinate}) and (\ref{eq:global-phi-small-coordinate}), we conclude that
\begin{equation}
    \max_{i\in[T]}\phi'(y_i)
    \le
    2\left(
    \Norm{\nabla H_{\mU,\mu}(\vx)}
    +\mu\log\frac1\mu
    \right).
    \label{eq:global-max-phi-bound}
\end{equation}
We now return to the bound of Hessian. The first inequality in \eqref{eq:global-uniform-hessian} implies
\begin{align}
\begin{split}
    \Norm{\nabla^2H_{\mU,\mu}(\vx)}
    &\le
    \Norm{\nabla^2\bar f_T(\vy)}\Norm{D\rho_R(\vx)}^2+\Norm{\nabla\bar f_T(\vy)}\Norm{D^2\rho_R(\vx)}+\mu\\
    &\le2\max_{i\in[T]}\phi'(y_i)+\frac{27\sqrt T}{R}+\mu
    \\
    &\le4\left(\Norm{\nabla H_{\mU,\mu}(\vx)}+\mu\log\frac1\mu\right)+\frac{1027}{1000}\mu
    \\
    &\le\frac{5027}{1000}\left(\Norm{\nabla H_{\mU,\mu}(\vx)}+\mu\log\frac1\mu\right)
    \\
    &<240\left(\Norm{\nabla H_{\mU,\mu}(\vx)}+\mu\log\frac1\mu\right),
\end{split}
\label{eq:global-curvature-case-three}
\end{align}
where the second inequality is based on 
equations (\ref{eq:global-completed-hessian-bound}),  
(\ref{eq:radial-jacobian-bound}), (\ref{eq:global-chain-bounds}),and  (\ref{eq:radial-second-bound});
the third inequality inequality follows from \eqref{eq:global-max-phi-bound} and setting $R=1000\sqrt T/\mu$;
the inequality uses the fact $\mu\le10^{-2}$ which implies $\mu\le\mu\log(1/\mu)$.

Combining equations (\ref{eq:global-curvature-case-one}), (\ref{eq:global-curvature-case-two}), and (\ref{eq:global-curvature-case-three}), we conclude that for every $\vx\in\BR^d$, it holds
\begin{equation*}
    \Norm{\nabla^2H_{\mU,\mu}(\vx)}
    \le
    240\left(
    \Norm{\nabla H_{\mU,\mu}(\vx)}
    +\mu\log\frac1\mu
    \right).
\end{equation*}
This completes the proof.
\end{proof}

\subsection{The Proof of Lemma~\ref{lem:hard-scaling}}
\label{app:hard-scaling}

\begin{proof}
Recall that the definition of $f_{\mU}(\vx)=aH_{\mU,\mu}(\vx/b)$ in \eqref{eq:scaled-hard-function} implies 
\begin{align}
    \nabla f_{\mU}(\vx)
    =\frac{a}{b}\nabla H_{\mU,\mu}\left(\frac{\vx}{b}\right),
\qquad\text{and}\qquad
    \nabla^2 f_{\mU}(\vx) = \frac{a}{b^2}\nabla^2 H_{\mU,\mu}\left(\frac{\vx}{b}\right).
    \label{eq:scaled-hessian-formula}
\end{align}
We verify the properties of the function $f_{\mU}(\vx)$ as follows.

\paragraph{The proof of statement (a).}
Combining the gradient-dependent curvature bound \eqref{eq:global-hard-curvature} and \eqref{eq:scaled-hessian-formula}, we have
\begin{align}\label{eq:scaled-hessian-intermediate}
\begin{split}    
& \Norm{\nabla^2 f_{\mU}(\vx)} 
= \frac{a}{b^2}\Norm{\nabla^2 H_{\mU,\mu}\left(\frac{\vx}{b}\right)} \\
\leq &  \frac{240a}{b^2}\left(
    \Norm{\nabla H_{\mU,\mu}(\vx)}
    +\mu\log\frac1\mu
    \right) \\
=&  \frac{240}{b}\Norm{\nabla f_{\mU}(\vx)}
    +\frac{240a\mu}{b^2}\log\frac1\mu \\
=& \frac{L_1}{2}\Norm{\nabla f_{\mU}(\vx)}
    +8L_1\epsilon\mu\log\frac1\mu,
\end{split}
\end{align}
where the last step follows from the setting $b=480/L_1$ and $a/b=16\epsilon$ in \eqref{eq:hard-scaling-parameters}.
For the term $\mu\log(1/\mu)$, recall we set
\begin{align}
    \delta_0=\min\left\{10^{-2},\frac{L_0}{16L_1\epsilon}\right\}\in(0, 10^{-2}]
    \qquad \text{and} \qquad
    \mu=\frac{\delta_0}{4\log(\mathrm e/\delta_0)}
    \label{eq:hard-scaling-parameters1}
\end{align}
in \eqref{eq:hard-scaling-parameters}, which implies
\begin{align*}
    \log\frac1\mu
    =
    \log\frac1{\delta_0}
    +\log\left(4\log\frac{\mathrm e}{\delta_0}\right)
    \le 2\log\frac1{\delta_0}+\log4 <2\log\frac{\mathrm e}{\delta_0},
    %\label{eq:scaled-mu-log}
\end{align*}
where the first inequality uses $\log t\le t-1$ for all $t>0$. 
Combining with the definition of $\delta_0$ and $\mu$ in \eqref{eq:hard-scaling-parameters1}, it holds
\begin{equation}
    \mu\log\frac1\mu
    \le\frac{\delta_0}{2}
    \le\frac{L_0}{32L_1\epsilon}.
    \label{eq:scaled-mu-bound}
\end{equation} 

Combining \eqref{eq:scaled-hessian-intermediate} and
\eqref{eq:scaled-mu-bound}, we know that for all $\vx\in\BR^d$, it holds
\begin{align*}
    \Norm{\nabla^2 f_{\mU}(\vx)}
    &\le
    \frac{L_1}{2}\Norm{\nabla f_{\mU}(\vx)}
    +8L_1\epsilon\mu\log\frac1\mu
    \\
    &\le
    \frac{L_1}{2}\Norm{\nabla f_{\mU}(\vx)}
    +4L_1\epsilon\delta_0
    \\
    &\le
    \frac12\left(
    L_0+L_1\Norm{\nabla f_{\mU}(\vx)}
    \right).
\end{align*}
Following Corollary A.4 of \citet{zhang2020improved} by replacing their parameters $L_0$, $L_1$, and $c$ with~$L_0/2$, $L_1/2$, and $1$, respectively, we obtain
\begin{align*}   
    \Norm{\nabla f_{\mU}(\vy)-\nabla f_{\mU}(\vx)}
    &\le
    \left(
    L_0+\frac{(\mathrm e-1)L_1
    \Norm{\nabla f_{\mU}(\vx)}}{2}
    \right)\Norm{\vy-\vx} \\
    &\le
    \left(
    L_0+L_1\Norm{\nabla f_{\mU}(\vx)}
    \right)\Norm{\vy-\vx},
\end{align*}
for all $\vx,\vy\in\BR^d$ with $\Norm{\vy-\vx}\le 1/L_1\le 2/L_1$.
Thus, the function $f_{\mU}$ is $(L_0,L_1)$-smooth.

\paragraph{The proof of statement (b).}
Since $a,b>0$, the definition of the function $f_{\mU}$ in \eqref{eq:scaled-hard-function} and the initial optimality gap bound \eqref{eq:global-hard-gap} imply
\begin{align*}
    f_{\mU}(\vzero)
    -\inf_{\vx\in\BR^d}f_{\mU}(\vx)
    &=
    a\left(
    H_{\mU,\mu}(\vzero)
    -\inf_{\vx\in\BR^d}H_{\mU,\mu}\left(\frac{\vx}{b}\right)
    \right) \\
    &=
    a\left(
    H_{\mU,\mu}(\vzero)
    -\inf_{\vx\in\BR^d}H_{\mU,\mu}(\vx)
    \right)
    \le2aT,
    %\label{eq:scaled-initial-gap}
\end{align*}
where the second equality follows from the fact that the mapping $\vx\mapsto\vx/b$ is a bijection on $\BR^d$.

\paragraph{The proof of statement (c).}
Let $i$ be the index $i\in[T]$ such that
\[
    \left|
    \left[
    \mU^\top\rho_R\left(\frac{\vx}{b}\right)
    \right]_i
    \right|\le1.
\]
By the gradient formula in \eqref{eq:scaled-hessian-formula} and \eqref{eq:global-hard-gradient}, we obtain
\begin{equation*}
    \Norm{\nabla f_{\mU}(\vx)}
    =
    \frac{a}{b}
    \Norm{\nabla H_{\mU,\mu}\left(\frac{\vx}{b}\right)}
    \ge
    \frac{a}{4b}
    =
    4\epsilon,
    %\label{eq:scaled-gradient-certificate}
\end{equation*}
where the last equality follows from the setting $a/b=16\epsilon$ in \eqref{eq:hard-scaling-parameters}. 
\end{proof}

\subsection{This Discussion on Related Work}\label{appendix:disscusion}
Recall that the construction of \citet{arjevani2023lower} depends on the component functions
\[
\Psi(t)=
\begin{cases}
0,&t\le1/2,\\
\exp\!\left(1-\dfrac1{(2t-1)^2}\right),&t>1/2,
\end{cases}
\qquad
\text{and}
\qquad
\Phi(t)=\sqrt\mathrm{e}\int_{-\infty}^t \exp\left(-\frac{\tau^2}{2}\right)\,\mathrm{d}\tau,
\]
which holds 
\[
\Phi'(t)=\sqrt \mathrm{e}\cdot \exp\left(-\frac{t^2}{2}\right)
\qquad\text{and}\qquad
\Phi''(t)=-t\Phi'(t).
\]
We consider their hard instance in one dimension case, i.e., 
\begin{align*}
    f_1(t)=-\Phi(t).
\end{align*}
which holds
\begin{align}\label{eq:f1}
\lim_{t\to+\infty} \frac{|f''_1(t)|}{|f'_1(t)|} 
= \lim_{t\to+\infty} \frac{|\Phi''(t)|}{|\Phi'(t)|} 
= \lim_{t\to+\infty} |t| =  +\infty.
\end{align}
On the other hand, if a function $f:\BR^d\to\BR$ is $(0, L_1)$-smooth, then it holds
\begin{align}\label{eq:diff-f-th}
\left\| \frac{\nabla f(\vx+t\vh)-\nabla f(\vx)}{t} \right\|
\le L_1\|\nabla f(\vx)\|    
\end{align}
for all $\vh\in\BR^d$ with $\Norm{\vh}=1$ and $t\in\BR$ with $|t|\in(0, 1/L_1]$.
Taking $t\to 0$, \eqref{eq:diff-f-th} implies
\begin{align*}
\left\| \nabla^2 f(\vx) \right\|
= \sup_{\vh\in\BR^d,\,\Norm{\vh}=1}\left\| \nabla^2 f(\vx)\vh \right\|
\le L_1\|\nabla f(\vx)\|.
\end{align*}
Combining with \eqref{eq:f1}, we know that the function $f_1(t)$ does not hold $(0, L_1)$-smoothness like the instance~$\bar f_T$ used in our construction.

\section{The Proofs for Results in Section \ref{sec:lower-bound}}

This section provides the proofs of our lower bound.

\subsection{The Proof of Lemma~\ref{lem:information-hiding}}
\label{app:information-hiding}

We first introduce the notations
\begin{equation}
    \chi(t):=1-S(4|t|-1)
    \qquad \text{and}\qquad
    \Theta_i(\vy):=\prod_{k=i}^{T}\chi(y_k),
    \label{eq:masking-functions}
\end{equation}
where 
\begin{equation*}
    S(u)= \begin{cases}
        0, & u\le 0,\\
        10u^3-15u^4+6u^5, & u\in(0,1),\\
        1, & u\ge 1.
    \end{cases}
\end{equation*}
follows definition in \eqref{eq:gate-S}.
Hence, \eqref{eq:masking-Q}) implies we can write
\begin{equation}
    Q_T(\vy)=\sum_{i=1}^{T}A_i(\vy)\Theta_i(\vy).
    \label{eq:masking-Q-decomposition}
\end{equation}

We present several lemmas for the proof of Lemma~\ref{lem:information-hiding}.
\begin{lemma}\label{eq:chi-flat-derivative}
Based on the definition of $\chi(t)$ in \eqref{eq:masking-functions}, for all $t\in\BR$, it holds 
\begin{align} \label{eq:chi-derivative-bound}
0\le\chi(t)\le1
\qquad\text{and}\qquad
|\chi'(t)|^2\le14400\bigl(1-(\chi(t))^2\bigr).    
\end{align}
\end{lemma}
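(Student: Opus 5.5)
We plan a direct one-variable computation, reducing everything to a statement about $S$ on $[0,1]$. Set $u:=4|t|-1$, so that $\chi(t)=1-S(u)$.

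The first claim, $0\le\chi(t)\le1$, is immediate. The proof of Lemma~\ref{lem:gate-properties}(b) already shows $S(u)\in[0,1]$ for every $u\in\BR$.

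For the derivative bound we split into three regions.
\begin{itemize}
\item If $|t|\le 1/4$, then $u\le 0$.
\item If $|t|\ge 1/2$, then $u\ge 1$.
\item If $1/4<|t|<1/2$, then $u\in(0,1)$.
\end{itemize}
In the first two regions $\chi$ is locally constant, equal to $1$ or $0$. Since $S'(0)=S'(1)=0$ by \eqref{eq:S-prime}, the function $\chi$ is $C^1$ there (including at the endpoints $u=0,1$) with $\chi'(t)=0$. The inequality then holds trivially, because its right-hand side is nonnegative. At $t=0$ the function $|t|$ is not differentiable, but $\chi\equiv1$ on a neighbourhood of $0$, so $\chi'(0)=0$.

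In the middle region the chain rule gives $|\chi'(t)|=4S'(u)=120u^2(1-u)^2$. On the other side,
\begin{align*}
1-\chi(t)^2=(1-\chi)(1+\chi)=S(u)\bigl(2-S(u)\bigr)\ge S(u),
\end{align*}
using $S\le1$. It therefore suffices to show
\begin{align*}
14400\,u^4(1-u)^4\le 14400\,S(u)\qquad\text{for } u\in(0,1),
\end{align*}
that is, $u^4(1-u)^4\le S(u)$.

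This polynomial inequality is the only real work, and we expect it to be easy. We have $S(u)=u^3(10-15u+6u^2)$, and $10-15u+6u^2$ has negative discriminant $225-240<0$. Its minimum on $[0,1]$ is attained at $u=1$ and equals $1$, so $S(u)\ge u^3$. Meanwhile $u^4(1-u)^4\le u^3\cdot u(1-u)^4\le u^3$, which finishes the comparison.

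Alternatively, one can use $u^2(1-u)^2\le 1/16$ and $S(u)\ge u^3$, though the crude bound above already suffices. The constant $14400=120^2$ matches exactly: $|\chi'|^2=14400\,u^4(1-u)^4$.
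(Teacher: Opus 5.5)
Your proposal is correct and follows the same route as the paper. You substitute $u=4|t|-1$, note that $\chi'$ vanishes outside $1/4<|t|<1/2$, and chain $1-\chi^2=S(u)(2-S(u))\ge S(u)\ge u^3\ge u^4(1-u)^4$, using $10-15u+6u^2\ge 1$ on $[0,1]$; your added checks of differentiability at $t=0$ and at $|t|\in\{1/4,1/2\}$ (via $S'(0)=S'(1)=0$) simply spell out what the paper leaves implicit.
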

\begin{proof}    
According to definition of $\chi(t)$ in \eqref{eq:masking-functions}, it holds
\begin{equation}\label{eq:chi-flat-values}
    \chi(t)= \begin{cases}
    1, & \text{if }|t|\le\dfrac14, \\[0.3cm]
    0, & \text{if }|t|\ge\dfrac12,
    \end{cases}
\end{equation}
which implies 
\begin{equation}\label{eq:chi-flat-values2}
    \chi'(t)=0
    \quad\text{whenever}~~
    |t|\le\frac14
    ~~\text{ or }~~|t|\ge\frac12.
\end{equation}
Moreover, the bounds on $S$ established in the proof of Lemma~\ref{lem:gate-properties}(b) imply
\begin{equation}
    0\le\chi(t)\le1
    \qquad\text{for all }t\in\BR.
    \label{eq:chi-range}
\end{equation}
Combining equations (\ref{eq:chi-flat-values}) and (\ref{eq:chi-range}) indicates \eqref{eq:chi-derivative-bound} holds whenever $|t|\le1/4$ or~$|t|\ge1/2$. 

It remains to prove \eqref{eq:chi-derivative-bound} holds when $|t|\in(1/4,1/2)$.
We let 
\begin{align*}
u:=4|t|-1\in(0,1).     
\end{align*}
for $t$ such that $|t|\in(1/4,1/2)$.
Applying the chain rule and \eqref{eq:S-prime} give
\begin{align*}
    |\chi'(t)| = 4S'(u) = 120u^2(1-u)^2.    
\end{align*}
Moreover, the definition of $S(u)$ in \eqref{eq:gate-S} gives
\[
    S(u)=u^3(10-15u+6u^2)\ge u^3= u^3(3(1-u)(3-2u)+1) \geq u^3,
\]
Hence, combining the facts $u\in(0,1)$ and $S(u)\in(0,1)$ with above two results leads to 
\begin{align*}
    |\chi'(t)|^2
    &=14400u^4(1-u)^4
    \nonumber\\
    &\le14400u^3
    \le14400S(u)(2-S(u))
    \nonumber\\
    &=14400\bigl(1-\chi(t)^2\bigr).
\end{align*}
whenever $|t|\in(1/4,1/2)$.
Thus \eqref{eq:chi-derivative-bound} holds for every $t\in\BR$.
\end{proof}

\begin{lemma}
\label{lem:theta-gradient}
The functions $\Theta_i(\vy)$ defined in \eqref{eq:masking-functions} satisfy
\[
    \Norm{\nabla\Theta_i(\vy)}\le120
\]
for all $i\in[T]$ and $\vy\in\BR^T$.
\end{lemma}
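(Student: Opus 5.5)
We compute the gradient of $\Theta_i(\vy)=\prod_{k=i}^T\chi(y_k)$ coordinatewise and use the self-bounding inequality of Lemma~\ref{eq:chi-flat-derivative} to keep the bound free of $T$. Since $\Theta_i$ depends only on $y_i,\dots,y_T$, we have $\partial\Theta_i/\partial y_j=0$ for $j<i$. For $j\ge i$ the product rule gives
\begin{align*}
\frac{\partial\Theta_i(\vy)}{\partial y_j}=\chi'(y_j)\prod_{k\ge i,\,k\ne j}\chi(y_k).
\end{align*}
Writing $c_k:=\chi(y_k)\in[0,1]$, this yields
\begin{align*}
\Norm{\nabla\Theta_i(\vy)}^2=\sum_{j=i}^T|\chi'(y_j)|^2\prod_{k\ge i,\,k\ne j}c_k^2\le 14400\sum_{j=i}^T(1-c_j^2)\prod_{k\ge i,\,k\ne j}c_k^2 .
\end{align*}

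The key step is a telescoping bound on this sum. Since $c_k^2\le1$, we may drop the factors with $k<j$ and keep only those with $k>j$:
\begin{align*}
\sum_{j=i}^T(1-c_j^2)\prod_{k\ge i,\,k\ne j}c_k^2\le\sum_{j=i}^T(1-c_j^2)\prod_{k=j+1}^T c_k^2=\sum_{j=i}^T\Big(\prod_{k=j+1}^T c_k^2-\prod_{k=j}^T c_k^2\Big),
\end{align*}
with the empty product equal to $1$. The right-hand side telescopes to
\begin{align*}
1-\prod_{k=i}^T c_k^2\le 1 .
\end{align*}
Hence $\Norm{\nabla\Theta_i(\vy)}^2\le14400$, that is, $\Norm{\nabla\Theta_i(\vy)}\le120$.

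The main obstacle is avoiding a naive bound of order $\sqrt{T}\sup|\chi'|$. The inequality $|\chi'|^2\le14400(1-\chi^2)$ is exactly what converts the sum into a telescoping one. Differentiability of $\chi$ at the kinks of $|t|$ causes no trouble, because $\chi$ is constant on $|t|\le1/4$. Likewise $S$ is $C^1$ at $u=0,1$, so $\chi$ is $C^1$ and the product rule applies everywhere.
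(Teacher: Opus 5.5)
Your proof is correct and follows the same route as the paper: compute the partial derivatives by the product rule, apply $|\chi'|^2\le 14400(1-\chi^2)$, and bound the resulting weighted sum by $1$. The paper states that the sum is at most $1$ without justification, so your telescoping step supplies a detail the paper omits. That step drops the factors with $k<j$ and sums $\prod_{k>j}c_k^2-\prod_{k\ge j}c_k^2$ to get $1-\prod_{k\ge i}c_k^2\le 1$.
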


\begin{proof}
The definition $\Theta_i(\vy):=\prod_{k=i}^{T}\chi(y_k)$ implies
\begin{equation}
    \frac{\partial\Theta_i(\vy)}{\partial y_j}
    =
    \begin{cases}
        0, & j<i,\\[0.1cm]
        \displaystyle
        \chi'(y_j)
        \prod_{\substack{k=i\\k\ne j}}^{T}
        \chi(y_k), & j\ge i.
    \end{cases}
    \label{eq:theta-coordinate-derivative}
\end{equation}
Consequently, Lemma \ref{eq:chi-flat-derivative} implies 
\begin{align*}
    \Norm{\nabla\Theta_i(\vy)}^2
    = \sum_{j=i}^{T}
    |\chi'(y_j)|^2
    \prod_{\substack{k=i\\k\ne j}}^{T}
    \chi(y_k)^2
    \le 14400\sum_{j=i}^{T}
    \bigl(1-\chi(y_j)^2\bigr)
    \prod_{\substack{k=i\\k\ne j}}^{T}
    \chi(y_k)^2 \le14400. 
\end{align*}
Taking square roots on above result yields
\begin{equation*}
    \Norm{\nabla\Theta_i(\vy)}\le120,
\end{equation*}
which completes the proof.
\end{proof}

\begin{lemma}
\label{lem:mask-one-step}
For given $\vy=[y_1,\dots,y_T]\in\BR^T$,
suppose there exists some $s\in\{0,\ldots,T-1\}$ such that
$|y_{s+1}|,\ldots,|y_T|\le1/4$.
Then the stochastic component $\bar F_T$ defined in \eqref{eq:stochastic-chain} satisfies:
\begin{enumerate}[label=(\alph*),leftmargin=0.65cm]
    \item If $Z=0$, then
    $\operatorname{supp}(\nabla\bar F_T(\vy;0))
    \subseteq[s]$ and
    $\nabla\bar F_T(\vy;0)
    =\nabla\bar F_T(\Pi_s(\vy);0)$.

    \item If $Z=1$, then
    $\operatorname{supp}(\nabla\bar F_T(\vy;1))
    \subseteq[s+1]$ and
    $\nabla\bar F_T(\vy;1)
    =\nabla\bar F_T(\Pi_{s+1}(\vy);1)$.
\end{enumerate}
Here, we denote  $\Pi_s(\vy)=[y_1,\dots,y_s,0,\dots,0]^\top$.
\end{lemma}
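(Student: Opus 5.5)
We first write $\bar F_T(\vy;Z)$ for the two values of $Z$. When $Z=0$, \eqref{eq:stochastic-chain} gives $\bar F_T(\vy;0)=\bar f_T(\vy)-Q_T(\vy)=\sum_{i=1}^T A_i(\vy)\bigl(1-\Theta_i(\vy)\bigr)$, using the decomposition \eqref{eq:masking-Q-decomposition}. When $Z=1$, we get $\bar F_T(\vy;1)=\bar f_T(\vy)+(1/q-1)Q_T(\vy)$. The argument is local. Every coordinate $y_k$ with $k>s$ satisfies $|y_k|\le 1/4<1/2$, so we work on the open neighbourhood where $|y_k|<1/2$ for all $k>s$; a nearby point with $|y_k|\le1/4$ but close to it is handled since the identities below hold on the set $|y_k|\le1/4$ and the derivative formulas only use values and derivatives there. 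On this set $\psi(\pm y_k)=0$ and $\psi'(\pm y_k)=0$ for every $k>s$ (Lemma~\ref{lem:gate-properties}(a) and \eqref{eq:gate-pair-derivative-zero}). For $|y_k|\le 1/4$ we also have $\chi(y_k)=1$ and $\chi'(y_k)=0$ by \eqref{eq:chi-flat-values}--\eqref{eq:chi-flat-values2}.

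\textbf{Step 1: identify the relevant terms.} For $i\ge s+2$, the term $A_i$ involves $\psi(\pm y_{i-1})$ with $i-1>s$. Both $A_i$ and its gradient vanish there, because every partial derivative carries a factor $\psi$ or $\psi'$ evaluated at $\pm y_{i-1}$. Hence these terms contribute nothing to either $\nabla\bar f_T$ or $\nabla Q_T$; for $\nabla Q_T$ we use the product rule together with $A_i=\nabla A_i=0$. For $i\le s+1$, the mask is $\Theta_i=\prod_{k=i}^{s}\chi(y_k)\cdot\prod_{k>s}\chi(y_k)$. The second factor equals $1$ and has zero derivative in $y_k$ for $k>s$. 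Thus $\Theta_i$ depends, to first order, only on $y_i,\dots,y_s$. In particular $\Theta_{s+1}\equiv1$ locally.

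\textbf{Step 2: case $Z=0$.} Here $1-\Theta_{s+1}=0$ with vanishing gradient, so the $i=s+1$ term drops out. The remaining terms $A_i(1-\Theta_i)$ with $i\le s$ depend only on $y_1,\dots,y_s$: $A_i$ uses $y_{i-1},y_i$ and $\Theta_i$ uses $y_i,\dots,y_s$. The gradient is therefore supported in $[s]$. Moreover, the formula for it is a function of $(y_1,\dots,y_s)$ alone, and it involves only factors $\chi(y_k)$, $\psi(\pm y_k)$ and so on for $k>s$, which take the same values (namely $1$ and $0$) at $\Pi_s(\vy)$. This gives $\nabla\bar F_T(\vy;0)=\nabla\bar F_T(\Pi_s(\vy);0)$.

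\textbf{Step 3: case $Z=1$.} Now the $i=s+1$ term survives with weight $1+(1/q-1)\Theta_{s+1}$. It depends on $y_s$ and $y_{s+1}$, and its $y_{s+1}$-partial is $-(\psi(-y_s)+\psi(y_s))\phi'(y_{s+1})\cdot(\text{const})$. This partial is nonzero in general, so the support lies in $[s+1]$. All coordinates beyond $s+1$ again contribute zero. The gradient expression depends on $y_1,\dots,y_{s+1}$ together with the fixed values $\chi=1$, $\chi'=0$, $\psi=\psi'=0$ at coordinates $k>s+1$, so it coincides at $\Pi_{s+1}(\vy)$. The edge case $s=0$ is handled the same way: there $\Theta_1\equiv1$ locally, so $\bar F_T(\cdot;0)$ has zero gradient.

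\textbf{Main obstacle.} The main difficulty is the bookkeeping in the equality claims $\nabla\bar F_T(\vy)=\nabla\bar F_T(\Pi(\vy))$. One must check that every factor depending on the truncated coordinates takes identical values at $\vy$ and at the projection. This holds because on $|t|\le1/4$, and at $t=0$, all of $\chi,\chi',\psi,\psi'$ are constant. We will write out explicitly the coordinate formula for $\partial_{y_j}$ of each term to confirm this.
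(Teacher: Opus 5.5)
Your proposal is correct and follows essentially the same route as the paper: remove the terms $i\ge s+2$ using $A_i(\vy)=0$ and $\nabla A_i(\vy)=\vzero$; use $\Theta_{s+1}(\vy)=1$ and $\nabla\Theta_{s+1}(\vy)=\vzero$ to drop the $(s+1)$th term when $Z=0$ and to isolate it when $Z=1$; then observe that the values and gradients of the remaining $A_i$ and $\Theta_i$ involve only the leading coordinates, so they coincide at $\Pi_s(\vy)$ or $\Pi_{s+1}(\vy)$. One small correction: drop the ``open neighbourhood'' framing and the claim $\Theta_{s+1}\equiv1$ locally, which fails when some $|y_k|=1/4$ because $\chi<1$ just beyond $1/4$; the proof only needs the pointwise identities $\chi(y_k)=1$, $\chi'(y_k)=0$ and $\psi(\pm y_k)=\psi'(\pm y_k)=0$ at the given point and at its projection, exactly as the paper uses them.
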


\begin{proof}
The assumption on the index $s$ means
\begin{equation}
    |y_{s+1}|,\ldots,|y_T|\le\frac14.
    \label{eq:information-tail-assumption}
\end{equation}
According to equations (\ref{eq:masking-functions}) and (\ref{eq:chi-flat-values}), we have
\begin{equation*}
    \chi(y_k)=1
    \qquad \text{and}\qquad
    \chi'(y_k)=0
    \qquad \text{for all}~~k=s+1,\ldots,T.
    %\label{eq:tail-chi-flat}
\end{equation*}
Consequently, the definition of $\Theta_i(\vy)$ in \eqref{eq:masking-functions} implies
\begin{equation}
    \Theta_i(\vy)
    =\begin{cases}
    \displaystyle{\prod_{k=i}^{s}\chi(y_k)},  & \text{if}~i\le s+1, \\[0.35cm]
    1 & \text{otherwise}.
    \end{cases}
    \label{eq:theta-tail-reduction}
\end{equation}
Hence, we have
\begin{equation}
    \Theta_{s+1}(\vy)=1,
    \qquad\text{and}\qquad
    \nabla\Theta_{s+1}(\vy)=\vzero,
    \label{eq:theta-s-plus-one}
\end{equation}
It also holds
\begin{equation}
    \frac{\partial\Theta_i(\vy)}{\partial y_j}=0 \qquad
    \text{whenever} \quad i\le s+1 \quad\text{and}\quad j\ge s+1.
    \label{eq:theta-no-tail-derivative}
\end{equation}
Moreover, for all $i\ge s+2$, condition \eqref{eq:information-tail-assumption} gives $|y_{i-1}|\le1/4$ and $|y_i|\le1/4$. 
Hence, Lemma~\ref{lem:gate-properties}(a) implies
\[
    \psi(y_{i-1})=\psi(-y_{i-1})=\psi'(y_{i-1})=\psi'(-y_{i-1})=0.
\]
Hence, the definition of $A_i(\vy)$ in \eqref{eq:hard-zero-chain} implies
\begin{equation}
    A_i(\vy)=0
    \qquad\text{and}\quad
    \nabla A_i(\vy)=\vzero
    \qquad \text{for all}~~i\ge s+2.
    \label{eq:A-tail-zero}
\end{equation}

\paragraph{The proof of statement (a).}
For $Z=0$, equations~(\ref{eq:stochastic-chain}) and (\ref{eq:masking-Q-decomposition}) give
\begin{equation*}
    \bar F_T(\vy;0)
    =
    \sum_{i=1}^{T}
    A_i(\vy)\bigl(1-\Theta_i(\vy)\bigr).
    %\label{eq:F-zero-expansion}
\end{equation*}
Combining with equations (\ref{eq:theta-s-plus-one}) and (\ref{eq:A-tail-zero}), we have
\begin{equation}
    \nabla\bar F_T(\vy;0)
    =
    \sum_{i=1}^{s}
    \nabla\left(
    A_i(\vy)\bigl(1-\Theta_i(\vy)\bigr)
    \right).
    \label{eq:F-zero-first-s}
\end{equation}
Under condition (\ref{eq:information-tail-assumption}), 
we know that \eqref{eq:hard-zero-chain} indicates
all functions $A_1,\dots,A_s$ only depend on  $y_1,\ldots,y_s$,
and equations (\ref{eq:theta-tail-reduction}) and (\ref{eq:theta-no-tail-derivative}) imply that the function values and the gradients of all~$\Theta_1,\dots,\Theta_s$ only depend on the $y_1,\ldots,y_s$. 
Hence, \eqref{eq:F-zero-first-s} yields
\begin{align*}
    \operatorname{supp}
    \bigl(\nabla\bar F_T(\vy;0)\bigr)
    \subseteq[s]
    \qquad\text{and}\qquad
    \nabla\bar F_T(\vy;0)
    =\nabla\bar F_T(\Pi_s(\vy);0).
\end{align*}

\paragraph{The proof of statement (b).}
For $Z=1$, equations~\eqref{eq:stochastic-chain} and \eqref{eq:masking-Q-decomposition} give
\begin{equation*}
    \bar F_T(\vy;1)
    =\sum_{i=1}^{T} A_i(\vy)\left(1+\left(\frac1q-1\right)\Theta_i(\vy)\right).
\end{equation*}
Recall that \eqref{eq:A-tail-zero} indicates that all terms with $i\ge s+2$ in above summation have zero gradient. Thus, it holds
\begin{equation}
    \nabla\bar F_T(\vy;1)
    =
    \sum_{i=1}^{s+1}
    \nabla\left(
    A_i(\vy)
    \left(
    1+\left(\frac1q-1\right)\Theta_i(\vy)
    \right)
    \right).
    \label{eq:F-one-first-s-plus-one}
\end{equation}
Under condition (\ref{eq:information-tail-assumption}), 
we know \eqref{eq:hard-zero-chain} implies all function $A_1,\dots,A_{s+1}$ only depends on~$y_1,\dots,y_{s+1}$,
and equations (\ref{eq:theta-tail-reduction}), (\ref{eq:theta-s-plus-one}), and (\ref{eq:theta-no-tail-derivative}) imply that the function values and the gradients of $\Theta_1,\dots,\Theta_{s+1}$ only depend on $y_1,\dots,y_{s+1}$. 
Hence, \eqref{eq:F-one-first-s-plus-one} yields
\begin{align*}
    \operatorname{supp}
    \bigl(\nabla\bar F_T(\vy;1)\bigr)
    \subseteq[s+1]
    \qquad\text{and}\qquad
    \nabla\bar F_T(\vy;1)
    =\nabla\bar F_T(\Pi_{s+1}(\vy);1).
\end{align*}
\end{proof}

We are now present the proof of  Lemma~\ref{lem:information-hiding}.

\begin{proof}
The verify properties of functions $\bar F_T$ and $Q_T$.

\paragraph{The proof of statement (a).}
According to \eqref{eq:masking-Q-decomposition} and the product rule, we know that
\begin{equation}
    \nabla Q_T(\vy)
    =
    \sum_{i=1}^{T}
    \left(
    \Theta_i(\vy)\nabla A_i(\vy)
    +A_i(\vy)\nabla\Theta_i(\vy)
    \right).
    \label{eq:grad-Bi}
\end{equation}
We denote
\begin{equation*}
    m:=\operatorname{prog}_{1/2}(\vy)
    \in\{0,1,\ldots,T\}.
\end{equation*}
For any $m\in\{0,1,\ldots,T\}$ and $i\ge m+2$, we have $|y_{i-1}|\le1/2$. Hence, Lemma~\ref{lem:gate-properties}(a) and \eqref{eq:gate-pair-derivative-zero} implies
\[
    \psi(y_{i-1}) = \psi(-y_{i-1})
    =\psi'(y_{i-1})=\psi'(-y_{i-1})=0.
\]
Combing above results with \eqref{eq:hard-zero-chain} gives
$A_i(\vy)=0$ and $\nabla A_i(\vy)=\vzero$, which implies
\begin{equation}
     \Theta_i(\vy)\nabla A_i(\vy)+A_i(\vy)\nabla\Theta_i(\vy)=\vzero
    ~~~~~\text{for all}~i\ge m+2.
    \label{eq:Bi-right-zero}
\end{equation}

In the case of $m=0$, combing equations (\ref{eq:grad-Bi}) and (\ref{eq:Bi-right-zero}) implies
\begin{align}\label{eq:Bi-left-zero-0}
    \nabla Q_T(\vy)
    = \Theta_1(\vy)\nabla A_1(\vy)+A_1(\vy)\nabla\Theta_1(\vy)  
\end{align}
In the case of $m\in[T]$, we have $|y_m|>1/2$. Hence, \eqref{eq:chi-flat-values} and \eqref{eq:chi-flat-values2} imply
\[
    \chi(y_m)=\chi'(y_m)=0.
\]
Note that for all $i\le m$, equations~(\ref{eq:masking-functions}) and (\ref{eq:theta-coordinate-derivative}) yield
\[
    \Theta_i(\vy)=\prod_{k=i}^{T}\chi(y_k)=0
    \qquad\text{and}\qquad
    \nabla\Theta_i(\vy)=\vzero,
\]
respectively. This implies
\begin{equation}
    \Theta_i(\vy)\nabla A_i(\vy)+A_i(\vy)\nabla\Theta_i(\vy)=\vzero
    ~~~~~\text{for all}~i\le m.
    \label{eq:Bi-left-zero}
\end{equation}
Combining equations (\ref{eq:Bi-right-zero}), (\ref{eq:Bi-left-zero-0}) and (\ref{eq:Bi-left-zero}), we obtain
\begin{equation}
    \nabla Q_T(\vy)=
    \begin{cases}
        \Theta_{m+1}(\vy)\nabla A_{m+1}(\vy)+A_{m+1}(\vy)\nabla\Theta_{m+1}(\vy)
        & \text{if}~m<T,\\[0.1cm]
        \vzero, & \text{if}~m=T.
    \end{cases}
    \label{eq:single-active-B}
\end{equation}

We now establish the upper bound on $\Norm{\nabla Q_T(\vy)}$ by considering the following three cases.
\begin{enumerate}[leftmargin=0.65cm]
\item In the case of $m=0$, we have $A_1(\vy)=-\phi(y_1)$, so that Lemma~\ref{lem:gate-properties}(c) implies
$|A_1(\vy)|<2$ and~$\Norm{\nabla A_1(\vy)}\le1$.
By \eqref{eq:single-active-B}, \eqref{eq:chi-range}, and Lemma~\ref{lem:theta-gradient}, we obtain
\begin{align*}
    \Norm{\nabla Q_T(\vy)}
    &\le
    |\Theta_1(\vy)|\Norm{\nabla A_1(\vy)}+|A_1(\vy)|\Norm{\nabla\Theta_1(\vy)}
    \\
    &\le1+2\cdot120
    =241<249.
\end{align*}

\item In the case of $m\in[T-1]$, combining the definition of $A_i(\vy)$ in equation~(\ref{eq:hard-zero-chain}) with \eqref{eq:gate-pair-value} and Lemma~\ref{lem:gate-properties}(c) imply
\begin{equation}
    |A_{m+1}(\vy)|
    = |\psi(-y_{m})\phi(-y_{m+1})-\psi(y_{m})\phi(y_{m+1})|
    \le 2(\psi(-y_m)+\psi(y_m))
    \le 2.
    \label{eq:Aj-bound}
\end{equation}
Moreover, combining the definition of $A_i(\vy)$ in \eqref{eq:hard-zero-chain} with \eqref{eq:gate-pair-first}, \eqref{eq:gate-pair-value}, and Lemma~\ref{lem:gate-properties}(c), we have
\begin{align*}
    &\left|\frac{\partial A_{m+1}(\vy)}{\partial y_m}\right|
    = |\psi'(-y_{m})\phi(-y_{m+1})+\psi'(y_{m})\phi(y_{m+1})|
    \le
    2|\psi'(-y_m)+\psi'(y_m)|
    \le8,\\
    &\left|\frac{\partial A_{m+1}(\vy)}{\partial y_{m+1}}\right|
    = |\psi(-y_{m})\phi'(-y_{m+1})+\psi(y_{m})\phi'(y_{m+1})|
    \leq |\psi(-y_m)+\psi(y_m)|
    \le1,
\end{align*}
Since the function $A_{m+1}(\vy)$ depends only on $y_m$ and $y_{m+1}$, the above results leads to
\begin{equation}
    \Norm{\nabla A_{m+1}(\vy)}
    \le\sqrt{8^2+1^2}
    =\sqrt{65}<9.
    \label{eq:grad-Aj-bound}
\end{equation}
Therefore, combining \eqref{eq:single-active-B} with equations (\ref{eq:masking-functions}), (\ref{eq:chi-range}), (\ref{eq:grad-Aj-bound}), and (\ref{eq:Aj-bound}), and Lemma~\ref{lem:theta-gradient}, we have
\begin{align}
\begin{split}    
    \Norm{\nabla Q_T(\vy)}
    &\le
    |\Theta_{m+1}(\vy)|
    \Norm{\nabla A_{m+1}(\vy)}
    +|A_{m+1}(\vy)|
    \Norm{\nabla\Theta_{m+1}(\vy)} \\
    &\le9+2\cdot120
    =249.
    \label{eq:Q-gradient-bound}
\end{split}
\end{align}

\item 
In the case of $m=T$, \eqref{eq:single-active-B} directly gives 
$\Norm{\nabla Q_T(\vy)}=0\le249$.
\end{enumerate}
Therefore, the above derivations conclude $\Norm{\nabla Q_T(\vy)}\le249$ holds for all $\vy\in\BR^T$.

\paragraph{The proof of statement (b).}
Differentiating \eqref{eq:stochastic-chain} obtains
\begin{equation*}
    \vg_T(\vy;Z)
    =
    \nabla\bar f_T(\vy)
    +
    \left(\frac Zq-1\right)\nabla Q_T(\vy).
\end{equation*}
The distribution $Z\sim\operatorname{Bernoulli}(q)$ indicates $\BE_Z[Z/q-1]=0$, resulting
\begin{equation*}
    \BE_Z[\vg_T(\vy;Z)]
    =
    \nabla\bar f_T(\vy).
\end{equation*}

We then verify the property of probability-$q$ zero-chain for $\vg_T(\vy;Z)$. Fix given $\vy\in\BR^T$, we let 
\begin{align*}
s=\operatorname{prog}_{1/4}(\vy).     
\end{align*}
We show that $\vg_T(\vy;Z)$ is $q$-probability zero-chain by verifying conditions (\ref{eq:prob-zero-chain-no-progress}) and
(\ref{eq:prob-zero-chain-one-progress}) as follows.
\begin{enumerate}[leftmargin=0.65cm]
\item If $s<T$, the definition of $s$ implies $|y_{s+1}|,\ldots,|y_T|\le1/4$. Hence, when $Z=0$, we can apply Lemma~\ref{lem:mask-one-step}(a) to achieve
\begin{align}\label{eq:p-zero-chain-0}    
    \operatorname{prog}_0(\vg_T(\vy;0))
    \le s
    \quad\text{and}\quad
    \vg_T(\vy;0)
    =
    \vg_T(\Pi_s(\vy);0)
    \quad \text{for all } \vy\in\BR^T.
\end{align}
Recall that $\mathbb P(Z=0)=1-q$, then \eqref{eq:p-zero-chain-0} verifies that condition \eqref{eq:prob-zero-chain-no-progress}  with~$\theta=q$.
When $Z=1$, wen can apply Lemma~\ref{lem:mask-one-step}(b) to achieve
\begin{align}\label{eq:p-zero-chain-1}
    \operatorname{prog}_0(\vg_T(\vy;1))
    \le s+1 
    \quad\text{and}\quad
    \vg_T(\vy;1) =
    \vg_T(\Pi_{s+1}(\vy);1)
    \quad \text{for all } \vy\in\BR^T.
\end{align}
According to Lemma~\ref{lem:mask-one-step}(a), we know that \eqref{eq:p-zero-chain-1} also holds when $Z=0$.
Hence, we have verifies condition (\ref{eq:prob-zero-chain-one-progress})  with $\theta=q$.

\item If $s=T$, we have $\operatorname{prog}_{1/4}(\vy)=T$ and $\Pi_{1+\operatorname{prog}_{1/4}}(\vy)=\Pi_{1+\operatorname{prog}_{1/4}}(\vy)=\vy$. Therefore, for all $\vy\in\BR^T$, all conditions
\begin{align*}    
   \operatorname{prog}_0(\vg_T(\vy;Z))
    \le \operatorname{prog}_{1/4}(\vy),\quad
    \vg_T(\vy;Z)
    = \vg_T\bigl(
    \Pi_{\operatorname{prog}_{1/4}(\vy)}(\vy);Z
    \bigr), \\
    \operatorname{prog}_0(\vg_T(\vy;Z))
    \le1+\operatorname{prog}_{1/4}(\vy),\quad
    \vg_T(\vy;Z)
    = \vg_T\bigl(
    \Pi_{1+\operatorname{prog}_{1/4}(\vy)}(\vy);Z
    \bigr)
\end{align*}
holds in both cases of $Z=0$ and $Z=1$.
This verifies equations (\ref{eq:prob-zero-chain-no-progress}) and (\ref{eq:prob-zero-chain-one-progress}).
\end{enumerate}
Hence, we conclude that $\vg_T$ is a probability-$q$ zero-chain. 
\end{proof}

\subsection{The Proof of Lemma~\ref{lem:oracle-validity}}
\label{app:oracle-validity}

\begin{proof}
We let $\vy:=\mU^\top\rho_R(\vx/b)$. By the chain rule, the stochastic oracle defined in \eqref{eq:hard-stochastic-oracle} has the form of 
\begin{equation*}
    \mathsf O_{f_{\mU}}(\vx;Z)
    =
    \frac{a}{b}
    D\rho_R\left(\frac{\vx}{b}\right)^\top
    \mU\nabla\bar F_T(\vy;Z)
    +\frac{a\mu\vx}{b^2}
\end{equation*}
Combining the above result  with equations (\ref{eq:global-hard-function}), (\ref{eq:stochastic-chain}), (\ref{eq:scaled-hessian-formula}), and (\ref{eq:global-gradient-formula}), we obtain
\begin{equation}
    \mathsf O_{f_{\mU}}(\vx;Z)
    = \nabla f_{\mU}(\vx) + 
    \frac{a}{b}\left(\frac Zq-1\right)
    D\rho_R\left(\frac{\vx}{b}\right)^\top
    \mU\nabla Q_T(\vy).
    \label{eq:oracle-noise-exact}
\end{equation}

\paragraph{Unbiasedness.}
Since $Z\sim\operatorname{Bernoulli}(q)$ with $q\in(0,1]$, taking the expectation on \eqref{eq:oracle-noise-exact} achieves
\begin{equation}
    \BE_Z[\mathsf O_{f_{\mU}}(\vx;Z)]
    =
    \nabla f_{\mU}(\vx).
    \label{eq:oracle-unbiased-proof}
\end{equation}

\paragraph{Bound on the $p$th central moment.}
Combing equations (\ref{eq:oracle-noise-exact}), (\ref{eq:radial-jacobian-bound}), and (\ref{eq:Q-gradient-bound}), we have
\begin{align}\label{eq:oracle-noise-pointwise}
\begin{split}    
    \Norm{\mathsf O_{f_{\mU}}(\vx;Z)-\nabla f_{\mU}(\vx)}
    &\le
    \frac{a}{b}
    \left|\frac Zq-1\right|
    \Norm{D\rho_R(\vx/b)}
    \Norm{\mU}
    \Norm{\nabla Q_T(\vy)} \\
    &\le
    16\epsilon\cdot249
    \left|\frac Zq-1\right|
    =
    3984\epsilon
    \left|\frac Zq-1\right|,
\end{split}
\end{align}
where we use the setting $\Norm{\mU}=1$ and $a/b=16\epsilon$ from \eqref{eq:hard-scaling-parameters}.
Since $Z\sim\operatorname{Bernoulli}(q)$ and~$p\in(1,2]$, we have
\begin{align}
    \BE_Z\left[\left|\frac Zq-1\right|^p\right]
    &=
    (1-q)^p q^{1-p}+1-q\le
    q^{1-p}+1
    \le2q^{1-p},
    \label{eq:oracle-bernoulli-upper}
\end{align}
where the inequalities follow from $0<q\le1$ and $p>1$. Consequently, combining equations (\ref{eq:oracle-noise-pointwise}) and (\ref{eq:oracle-bernoulli-upper}) imply
\begin{equation}
    \BE_Z\left[
    \Norm{\mathsf O_{f_{\mU}}(\vx;Z)-\nabla f_{\mU}(\vx)}^p
    \right]
    \le
    2(3984\epsilon)^p q^{1-p}.
    \label{eq:oracle-noise-before-q}
\end{equation}
If $q<1$, then $\sigma>0$ and the definition of $q$ gives $q^{p-1}=2(3984)^p\epsilon^p\sigma^{-p}$.
Substituting this identity into \eqref{eq:oracle-noise-before-q}, we obtain
\begin{align}
    \BE_Z\left[
    \Norm{\mathsf O_{f_{\mU}}(\vx;Z)-\nabla f_{\mU}(\vx)}^p
    \right]
    \le 2(3984\epsilon)^p q^{1-p}=2(3984\epsilon)^p\cdot\frac{\sigma^p}{2(3984)^p\epsilon^p}=\sigma^p.
    \label{eq:oracle-moment-q-less-one}
\end{align}
If $q=1$, then $Z=1$ almost surely. Hence, \eqref{eq:oracle-unbiased-proof} implies
\begin{equation}
    \BE_Z\left[
    \Norm{\mathsf O_{f_{\mU}}(\vx;Z)-\nabla f_{\mU}(\vx)}^p
    \right]
    =0\le\sigma^p.
    \label{eq:oracle-moment-q-one}
\end{equation}
Combining \eqref{eq:oracle-moment-q-less-one} and \eqref{eq:oracle-moment-q-one}, we conclude that for every $\vx\in\BR^d$,
\begin{equation}
    \BE_Z\left[
    \Norm{\mathsf O_{f_{\mU}}(\vx;Z)-\nabla f_{\mU}(\vx)}^p
    \right]
    \le\sigma^p.
    \label{eq:oracle-p-moment-proof}
\end{equation}

\end{proof}

\subsection{The Proof of Lemma~\ref{lem:randomized-hardness}}
\label{app:randomized-hardness}

\begin{proof}
According to Lemma~\ref{lem:information-hiding}, we know that 
the gradient evaluation $\vg_T(\vy;Z)$ is a probability-$q$ zero-chain.
Moreover, the definition of $\rho$ implies $\Norm{\rho_R(\vx/b)}\le R$ for every $\vx\in\BR^d$.
Note that the proof of Lemma 5 of \citet{arjevani2023lower} only require the oracle is a probability zero-chain, so that we can use this lemma with $K=1$ to achieve 
\[
    \operatorname{prog}_{{1}/{4}}\big(\mU^{\top}\rho_{R}(\vx_{\mathsf{A}}^{(t)}/b) \big)< T
\]
for all $t\le(T-\log(2/\delta))/(2q)$.
Hence, we have
\[
    \left|
    [\mU^\top\rho_R(\vx^{(t)}/b)]_T
    \right|
    \le\frac14.
\]

According to Lemma~\ref{lem:global-hard-properties},
we obtain
\[
    \Norm{\nabla H_{\mU,\mu}(\vx^{(t)}/b)}
    \ge\frac14.
\]
Based on the definition  
\[
    \nabla f_{\mU}(\vx)
    =
    \frac ab\nabla H_{\mU,\mu}(\vx/b),
\]
with $a/b=16\epsilon$, it follows that
\[
    \Norm{\nabla f_{\mU}(\vx^{(t)})}
    \ge4\epsilon.
\]
This completes the proof.
\end{proof}

\subsection{The Proof of Theorem~\ref{thm:L1-lower-bound}}
\label{app:L1-lower-bound}

\begin{proof}
We follow the setting of $a$, $b$, $\mu$, and $R$ by equations (\ref{eq:hard-scaling-parameters}) and (\ref{eq:global-hard-function}).
For $\epsilon\le\hat c L_1\Delta$ with $\hat c=1/122880$, we let
\begin{equation}
    T:=
    \left\lfloor\frac{\Delta}{2a}\right\rfloor
    =
    \left\lfloor
    \frac{L_1\Delta}{15360\epsilon}
    \right\rfloor  \geq 8.
    \label{eq:calibrated-chain-length}
\end{equation}

Fix $p\in(1,2]$. 
According to Lemma~\ref{lem:hard-scaling}(a) and (b), the function $f_{\mU}$ is $(L_0,L_1)$-smooth and satisfies
\begin{equation*}
    f_{\mU}(\vzero)
    -\inf_{\vx\in\BR^d}f_{\mU}(\vx)
    \le2aT\le\Delta,
    %\label{eq:calibrated-gap-bound}
\end{equation*}
where the last inequality follows from \eqref{eq:calibrated-chain-length}. Thus, we conclude $f_{\mU}\in\mathcal F(L_0,L_1,\Delta)$.

Following the setting of $q$ in \eqref{eq:revelation-probability},
Lemma~\ref{lem:oracle-validity} implies the oracle $\mathsf O_{f_{\mU}}(\vx;Z)$ satisfies is unbiased and has $p$th bounded central moment.

We let $\mU\sim\operatorname{Unif}(\operatorname{St}(d,T))$ and use $P_f$ to denote the induced distribution of $f_{\mU}$. 
Taking sufficient large $d$ such that
\begin{equation*}
    d\ge
    \left\lceil
    T+32R^2\log\left(\frac{4T^2}{q}\right)
    \right\rceil,
\end{equation*}
which satisfies \eqref{eq:rotation-dimension} with $\delta=1/2$.
Applying Lemma~\ref{lem:randomized-hardness}, we know that for all randomized algorithm $\mathsf A\in\mathcal A_{\mathrm{rand}}$ and $t\le N$  with
\begin{equation}
    N:=
    \left\lfloor
    \frac{T-\log4}{2q}
    \right\rfloor,
    \label{eq:calibrated-query-limit}
\end{equation}
we have
\begin{equation*}
    \mathbb P\left(
    \Norm{\nabla f_{\mU}(\vx^{(t)})}
    \ge4\epsilon
    \right)
    \ge\frac12.
\end{equation*}
This implies
\begin{align*}
    \BE\left[\Norm{\nabla f_{\mU}(\vx^{(t)})}\right]
    \ge
    4\epsilon\,
    \mathbb P\left(
    \Norm{\nabla f_{\mU}(\vx^{(t)})}
    \ge4\epsilon
    \right)\ge2\epsilon>\epsilon,
\end{align*}
where the expectation and probability are taken over $\mU$, the oracle randomness, and the internal randomness of $\mathsf A$.
Since the distribution $P_f$ is supported on $\mathcal F(L_0,L_1,\Delta)$ and the oracle $\mathsf O_{f_{\mU}}$ condition \eqref{eq:oracle-p-moment}, the distributional complexity defined in \eqref{eq:distributional-complexity} implies
\begin{equation}
    m_{\epsilon}^{\mathrm{rand}}
    (L_0,L_1,\Delta,\sigma,p)>N.
    \label{eq:lower-via-N}
\end{equation}

We now provide the lower bound for $N$. 
According to \eqref{eq:calibrated-query-limit}, we have
\begin{align}
    N\ge\frac{T-\log4}{2q}-1=\frac{T-\log4-2q}{2q}\ge\frac{T-4}{2q}\ge\frac{T}{4q},
    \label{eq:calibrated-N-lower}
\end{align}
where the first inequality uses $\lfloor x\rfloor\ge x-1$, and the last two inequalities follow from $q\le1$, $\log4<2$, and $T\ge8$.
Moreover, by \eqref{eq:calibrated-chain-length} and $L_1\Delta/(15360\epsilon)\ge8$, we obtain
\begin{equation}
    T
    \ge
    \frac{L_1\Delta}{15360\epsilon}-1
    \ge
    \frac{L_1\Delta}{30720\epsilon}.
    \label{eq:calibrated-T-lower}
\end{equation}

Additionally, the definition of $q$ implies
\begin{align}
\begin{split}    
    \frac1q
    =\max\left\{
    1,\,  2^{-\frac1{p-1}}
    (3984)^{-\frac p{p-1}}\cdot
    \frac{\sigma^{\frac p{p-1}}}{\epsilon^{\frac p{p-1}}}
    \right\}
    \ge \frac{
    2^{-\frac1{p-1}}
    (3984)^{-\frac p{p-1}}
    }{2}
    \left(\frac{\sigma^{\frac p{p-1}}}{\epsilon^{\frac p{p-1}}}+1
    \right),
\end{split}\label{eq:calibrated-q-lower}
\end{align}
where the inequality follows from $\max\{u,v\}\ge(u+v)/2$ for $u,v\ge0$ and
$2^{-\frac1{p-1}}(3984)^{-\frac p{p-1}}\le1$.
For any given $p\in(1,2]$, combining equations (\ref{eq:lower-via-N}), (\ref{eq:calibrated-N-lower}), (\ref{eq:calibrated-T-lower}), and (\ref{eq:calibrated-q-lower}) achieves
\begin{align*}
    m_{\epsilon}^{\mathrm{rand}}
    (L_0,L_1,\Delta,\sigma,p)
    \ge
    \Omega\left(\left(
    \frac{\sigma^{\frac p{p-1}}}{\epsilon^{\frac p{p-1}}}+1
    \right)
    \frac{L_1\Delta}{\epsilon}\right).
\end{align*}

This finishes the proof.
\end{proof}

\subsection{The Proof of Corollary~\ref{cor:joint-lower-bound}}
\label{app:L0-L1-lower-bound}

\begin{proof}
For any given $p\in(1,2]$, recall that \citet[Theorem 3.3]{liu2025nonconvex} have provided a lower bound for standard smooth optimization under the $p$-BCM condition, i.e.,  there exists $\tilde c>0$ such that
for all $L_0,\Delta,\sigma>0$, and~$\epsilon\le\tilde c\sqrt{L_0\Delta}$, it holds
\begin{equation*}
    m_{\epsilon}^{\mathrm{rand}}(L_0,0,\Delta,\sigma,p)
    \ge \Omega\left(\bigg(\frac{\sigma^{\frac{p}{p-1}}}{\epsilon^{\frac{p}{p-1}}}+1 \bigg)\frac{L_0\Delta}{\epsilon^2}\right),
\end{equation*}
Since $\mathcal F(L_0,0,\Delta)\subset\mathcal F(L_0,L_1,\Delta)$, the above lower bound implies
\begin{equation*}
    m_{\epsilon}^{\mathrm{rand}}(L_0,L_1,\Delta,\sigma,p)
    \ge \Omega\left(\bigg(\frac{\sigma^{\frac{p}{p-1}}}{\epsilon^{\frac{p}{p-1}}}+1 \bigg)\frac{L_0\Delta}{\epsilon^2}\right).
\end{equation*}
Combining the result of Theorem \ref{thm:L1-lower-bound}, we know that 
for any given $p\in(1,2]$, there exists numerical constant $c=\min\{\hat c, \tilde c\}$ such that for all $L_0,L_1,\Delta,\sigma>0$, and~$\epsilon\le\hat c\min\{\sqrt{L_0\Delta},L_1\Delta\}$, it holds
\begin{equation*}
\begin{aligned}
    m_{\epsilon}^{\mathrm{rand}}
    (L_0,L_1,\Delta,\sigma,p)
    \ge  \Omega\left(\bigg(\frac{\sigma^{\frac{p}{p-1}}}{\epsilon^{\frac{p}{p-1}}}+1\bigg)\bigg(\frac{L_0\Delta}{\epsilon^2}+\frac{L_1\Delta}{\epsilon}\bigg)\right).
\end{aligned}
\end{equation*}
This finishes the proof.
\end{proof}

\section{The Proof of the Result in Section \ref{sec:upper-bound}}

This section provides the proof for the upper bound of Algorithm \ref{alg:sn-mb-sgd}.
For given $\gamma\geq 0$, we define
\[
\varphi_\gamma(\vv)= \begin{cases}
\dfrac{\vv}{\|\vv\|+\gamma},& \text{if}~\|\vv\|+\gamma>0,\\[4mm]
\vzero,& \text{if}~\vv=\vzero~~\text{and}~~\gamma=0.
\end{cases}
\]

\subsection{Some Technical Lemmas}

We provide some technical Lemmas for the analysis of normalized SGD.

\begin{lemma}[{\citet[Appendix A.1]{zhang2020improved}}]\label{lem:sym-smooth}
If $f$ is $(L_0,L_1)$-smooth, then it holds
\begin{align}\label{eq:smooth-descent}
    f(\vy) \leq f(\vx) + \inner{\nabla f(\vx)}{\vy-\vx} + \frac{L_0+L_1\Norm{\nabla f(\vx)}}{2}\Norm{\vy-\vx}^2
\end{align}
for all $\vx,\vy\in\BR^d$ with $\Norm{\vy-\vx}\leq 1/L_1$.
\end{lemma}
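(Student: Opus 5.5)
Lemma~\ref{lem:sym-smooth} follows from the fundamental theorem of calculus along the segment between $\vx$ and $\vy$, combined with the $(L_0,L_1)$-smoothness condition applied at the base point $\vx$. Fix $\vx,\vy$ with $\Norm{\vy-\vx}\le 1/L_1$ and set $\vz_\tau:=\vx+\tau(\vy-\vx)$ for $\tau\in[0,1]$. Since $f\in\fF(L_0,L_1,\Delta)$, the gradient is continuous; the condition even makes it locally Lipschitz within any ball of radius $1/L_1$. Hence $\tau\mapsto f(\vz_\tau)$ is continuously differentiable, and
\begin{align*}
f(\vy)-f(\vx)-\inner{\nabla f(\vx)}{\vy-\vx}
=\int_0^1 \inner{\nabla f(\vz_\tau)-\nabla f(\vx)}{\vy-\vx}\,\mathrm{d}\tau .
\end{align*}

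The key step is to bound the integrand uniformly in $\tau$. Every point on the segment satisfies $\Norm{\vz_\tau-\vx}=\tau\Norm{\vy-\vx}\le 1/L_1$. Therefore the definition of $(L_0,L_1)$-smoothness, in the form stated in the function class with the gradient norm evaluated at the first argument, applies to the pair $(\vz_\tau,\vx)$ with $\vx$ as the reference point:
\begin{align*}
\Norm{\nabla f(\vz_\tau)-\nabla f(\vx)}\le \big(L_0+L_1\Norm{\nabla f(\vx)}\big)\,\tau\Norm{\vy-\vx}.
\end{align*}
We choose $\vx$, not $\vz_\tau$, as the reference point so that the bound involves only $\Norm{\nabla f(\vx)}$ and no information about $\nabla f$ along the path is needed. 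Applying Cauchy--Schwarz to the integrand and integrating $\tau$ over $[0,1]$ produces the factor $1/2$, which gives \eqref{eq:smooth-descent}.

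The only point needing care is the orientation of the definition. The function class writes the inequality with $\Norm{\nabla f(\vx)}$ on the right-hand side, for $\Norm{\vx-\vy}\le 1/L_1$, and we apply it with the roles assigned as ($\vx\leftarrow \vx$, $\vy\leftarrow\vz_\tau$). Since the gradient difference is symmetric in norm, this application is valid, so no Gr\"onwall-type argument such as Corollary A.4 of \citet{zhang2020improved} is needed. When $L_1=0$ the constraint is vacuous and the argument reduces to the standard descent lemma.
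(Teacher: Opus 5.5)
Your proof is correct: integrating $\langle\nabla f(\vx+\tau(\vy-\vx))-\nabla f(\vx),\vy-\vx\rangle$ over $\tau\in[0,1]$ and bounding $\Norm{\nabla f(\vx+\tau(\vy-\vx))-\nabla f(\vx)}$ with the smoothness condition anchored at $\vx$ is exactly what the paper's first-order definition permits, and it yields the constants $L_0$ and $L_1$ without any Hessian or Gr\"onwall step. The paper gives no proof of its own and only cites \citet[Appendix A.1]{zhang2020improved}, so your self-contained integration argument is the natural way to obtain the statement under the paper's definition.
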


\begin{lemma}\label{lem:normalize}
For all $\gamma \geq 0$, $\vg,\vh\in\BR^d$, it holds
\begin{align*}
\langle \vh,\varphi_\gamma(\vg)\rangle
\ge \|\vh\|-2\|\vg-\vh\|-\gamma.        
\end{align*}
\end{lemma}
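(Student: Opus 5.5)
We prove Lemma~\ref{lem:normalize} by splitting on whether $\vg=\vzero$ and, in the nonzero case, writing $\varphi_\gamma(\vg)=\lambda\,\vg/\Norm{\vg}$ with scalar $\lambda:=\Norm{\vg}/(\Norm{\vg}+\gamma)\in(0,1]$. The degenerate case $\vg=\vzero$ is immediate: then $\varphi_\gamma(\vg)=\vzero$ for every $\gamma\ge0$, so the left-hand side is $0$. On the right, $\Norm{\vh}-2\Norm{\vg-\vh}-\gamma=-\Norm{\vh}-\gamma\le0$, so the inequality holds.

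For $\vg\neq\vzero$, we first reduce to the unsmoothed direction $\vg/\Norm{\vg}$. We write
\begin{align*}
\inner{\vh}{\varphi_\gamma(\vg)}=\lambda\inner{\vh}{\vg/\Norm{\vg}}.
\end{align*}
We then bound the inner product with the unit vector via the triangle and Cauchy--Schwarz inequalities:
\begin{align*}
\inner{\vh}{\vg/\Norm{\vg}}=\Norm{\vg}+\inner{\vh-\vg}{\vg/\Norm{\vg}}\ge\Norm{\vg}-\Norm{\vg-\vh}\ge\Norm{\vh}-2\Norm{\vg-\vh}.
\end{align*}

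The remaining step is to handle the factor $\lambda$ and produce the $-\gamma$ term. It is better to multiply the cleaner intermediate bound by $\lambda$ instead:
\begin{align*}
\lambda\inner{\vh}{\vg/\Norm{\vg}}\ge\lambda\bigl(\Norm{\vg}-\Norm{\vg-\vh}\bigr).
\end{align*}
We then use the identity $\lambda\Norm{\vg}=\Norm{\vg}-\gamma\lambda\ge\Norm{\vg}-\gamma$. Since $\lambda\le1$, we also have $-\lambda\Norm{\vg-\vh}\ge-\Norm{\vg-\vh}$. Combining these gives a lower bound of $\Norm{\vg}-\Norm{\vg-\vh}-\gamma$. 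Finally, $\Norm{\vg}\ge\Norm{\vh}-\Norm{\vg-\vh}$ yields $\Norm{\vh}-2\Norm{\vg-\vh}-\gamma$.

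The only point needing care, and the main obstacle, is sign management when multiplying by $\lambda$. Multiplying the weaker bound $\Norm{\vh}-2\Norm{\vg-\vh}$ by $\lambda$ fails when that quantity is negative. Multiplying $\Norm{\vg}-\Norm{\vg-\vh}$ by $\lambda$ also fails in that case, since $\lambda(\text{negative})\ge\text{negative}$ is exactly what we need but $\lambda\Norm{\vg}\ge\Norm{\vg}-\gamma$ must be paired correctly. To avoid any issue, I will split into two subcases.

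\textbf{Subcase $\Norm{\vg}\ge\Norm{\vg-\vh}$.} Here
\begin{align*}
\lambda(\Norm{\vg}-\Norm{\vg-\vh})=\Norm{\vg}-\Norm{\vg-\vh}-\frac{\gamma(\Norm{\vg}-\Norm{\vg-\vh})}{\Norm{\vg}+\gamma}\ge\Norm{\vg}-\Norm{\vg-\vh}-\gamma.
\end{align*}
The last step uses $0\le\Norm{\vg}-\Norm{\vg-\vh}\le\Norm{\vg}+\gamma$.

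\textbf{Subcase $\Norm{\vg}<\Norm{\vg-\vh}$.} Here $\Norm{\vh}\le\Norm{\vg}+\Norm{\vg-\vh}<2\Norm{\vg-\vh}$, so the right-hand side is negative. It then suffices to note that $\inner{\vh}{\varphi_\gamma(\vg)}\ge-\Norm{\vh}\cdot\lambda\ge-\Norm{\vh}$ and to check $-\Norm{\vh}\ge\Norm{\vh}-2\Norm{\vg-\vh}-\gamma$. This holds because $2\Norm{\vh}\le2\Norm{\vg}+2\Norm{\vg-\vh}$ is not quite enough on its own, so we use the sharper chain instead:
\begin{align*}
\lambda\inner{\vh}{\vg/\Norm{\vg}}\ge\lambda(\Norm{\vg}-\Norm{\vg-\vh})\ge\Norm{\vg}-\Norm{\vg-\vh}.
\end{align*}
This is valid since the bracket is negative and $\lambda\le1$, and then we conclude as before.
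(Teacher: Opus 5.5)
Your argument is correct and is essentially the paper's proof. The paper writes $\inner{\vh}{\varphi_\gamma(\vg)}=\frac{\Norm{\vg}^2+\inner{\vh-\vg}{\vg}}{\Norm{\vg}+\gamma}$ and then uses $\frac{\Norm{\vg}^2}{\Norm{\vg}+\gamma}\ge\Norm{\vg}-\gamma$ and $\frac{\Norm{\vg}}{\Norm{\vg}+\gamma}\le1$, which are exactly your bounds $\lambda\Norm{\vg}\ge\Norm{\vg}-\gamma$ and $\lambda\le1$.

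Your unsplit chain in the third paragraph already completes the proof. Multiplying an inequality by $\lambda>0$ causes no sign problem, and both bounds hold unconditionally. So the subcase split, the remarks that multiplying by $\lambda$ ``fails'' when the bracket is negative (it does not), and the abandoned $-\Norm{\vh}$ detour can simply be deleted.
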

\begin{proof}
If $\vg=\vzero$ and $\gamma=0$, the result is trivial. Hence, we focus on the case of $\|\vg\|+\gamma>0$.
According to Cauchy–Schwarz inequality and triangle inequality, we achieve 
\[
\begin{aligned}
\langle \vh,\varphi_\gamma(\vg)\rangle
&=\frac{\|\vg\|^2+\langle \vh-\vg,\vg\rangle}{\|\vg\|+\gamma}\\
&\ge \frac{\|\vg\|^2}{\|\vg\|+\gamma}
      -\frac{\|\vh-\vg\|\cdot\|\vg\|}{\|\vg\|+\gamma}\\
&\ge \|\vg\|-\gamma-\|\vh-\vg\|\\
&\ge \|\vh\|-\gamma-2\|\vh-\vg\|.
\end{aligned}
\]
This finishes the proof.
\end{proof}

\begin{lemma}\label{lem:heavy-tailed} 
For Algorithm \ref{alg:sn-mb-sgd}, it holds that
\begin{align*}
    \BE[\Norm{\vg_t-\nabla f(\vx_t)}] \leq   \frac{2 \sqrt{2}\sigma}{b^{1-1 / p}}
\end{align*}
\end{lemma}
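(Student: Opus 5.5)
Since $\vg_t-\nabla f(\vx_t)=\frac1b\sum_{i=1}^b \vv_i$ with $\vv_i:=\vg(\vx_t;Z_{t,i})-\nabla f(\vx_t)$, I would work conditionally on $\vx_t$ (i.e.\ on the filtration $\fF_t$ generated by all samples before round $t$). Conditionally, the $\vv_i$ are i.i.d., mean zero, and satisfy $\BE[\Norm{\vv_i}^p\mid\fF_t]\le\sigma^p$. By the tower property it then suffices to prove
\begin{align*}
\BE\Big[\Big\|\sum_{i=1}^b\vv_i\Big\|^p\Big]\le 2^{2-p}\, b\,\sigma^p
\end{align*}
for independent zero-mean vectors. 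Jensen's inequality would then give $\BE\Norm{\frac1b\sum_i\vv_i}\le\big(\BE\Norm{\frac1b\sum_i\vv_i}^p\big)^{1/p}\le (4b\sigma^p)^{1/p}/b\le 4^{1/p} \sigma\, b^{1/p-1}$. Since $4^{1/p}\le 4^{1/1}$ is too crude, I would aim for the constant $2^{(2-p)/p}\cdot\sqrt2$ or simply cite a sharper inequality. To land exactly on $2\sqrt2$, I would use the known von Bahr--Esseen type bound for Hilbert spaces, $\BE\Norm{\sum_i\vv_i}^p\le 2^{2-p}\sum_i\BE\Norm{\vv_i}^p$. This yields $(2^{2-p}b\sigma^p)^{1/p}/b=2^{(2-p)/p}\sigma b^{1/p-1}\le 2\sigma b^{1/p-1}\le 2\sqrt2\,\sigma b^{1/p-1}$, since $2^{(2-p)/p}\le 2$ for $p>1$.

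To make the proof self-contained rather than citing that bound, I would proceed by symmetrization. Let $\vv_i'$ be an independent copy and $\varepsilon_i$ Rademacher signs. Then
\begin{align*}
\BE\Big\|\sum_i\vv_i\Big\|^p\le\BE\Big\|\sum_i(\vv_i-\vv_i')\Big\|^p=\BE\Big\|\sum_i\varepsilon_i(\vv_i-\vv_i')\Big\|^p\le 2^{p-1}\cdot 2\,\BE\Big\|\sum_i\varepsilon_i\vv_i\Big\|^p,
\end{align*}
where the first step is Jensen's inequality in $\vv'$ using $\BE\vv'=\vzero$. Conditioning on the $\vv_i$ and applying Jensen with exponent $p/2\le1$ gives
\begin{align*}
\BE_\varepsilon\Big\|\sum_i\varepsilon_i\vv_i\Big\|^p\le\Big(\sum_i\Norm{\vv_i}^2\Big)^{p/2}\le\sum_i\Norm{\vv_i}^p,
\end{align*}
the last step by subadditivity of $t\mapsto t^{p/2}$. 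This gives $\BE\Norm{\sum\vv_i}^p\le 2^p b\sigma^p$, hence $\BE\Norm{\vg_t-\nabla f(\vx_t)}\le 2\sigma b^{1/p-1}\le2\sqrt2\,\sigma b^{1/p-1}$.

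The main obstacle is only this moment inequality for sums of heavy-tailed vectors, where the variance is unavailable. The symmetrization route above handles it with room to spare in the constant. The remaining care is measurability and conditioning: $\vx_t$ is $\fF_t$-measurable and the $Z_{t,i}$ are independent of $\fF_t$, so the conditional bounds hold and integrate to the unconditional statement.
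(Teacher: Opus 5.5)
Your proof is correct, but it follows a different route from the paper. The paper cites Lemma 4.3 of Liu et al.\ (2025), which bounds the first moment directly, $\mathbb{E}\|\frac{1}{b}\sum_i \mathbf{v}_i\| \le 2\sqrt{2}\,\mathbb{E}\bigl[(b^{-p}\sum_i\|\mathbf{v}_i\|^p)^{1/p}\bigr]$. It then applies Jensen to the concave map $t\mapsto t^{1/p}$ to move the expectation inside, and finishes with the $p$-BCM bound.

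You instead bound the $p$th moment of the sum, $\mathbb{E}\|\sum_i\mathbf{v}_i\|^p\le 2^p\sum_i\mathbb{E}\|\mathbf{v}_i\|^p$. You get this from symmetrization with an independent copy, orthogonality of Rademacher signs combined with Jensen for $t\mapsto t^{p/2}$, and subadditivity of $t^{p/2}$. You then use Jensen in the opposite direction, $\mathbb{E}X\le(\mathbb{E}X^p)^{1/p}$. Every step checks out, including the conditioning on $\mathcal{F}_t$.

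Your route is self-contained and gives a sharper constant: $2$, or $2^{(2-p)/p}$ if you cite the von Bahr--Esseen-type bound. The price is that it uses conditional independence of the mini-batch samples given $\mathcal{F}_t$, which the independent-copy symmetrization needs. That independence holds for this algorithm. The paper's cited inequality is a Burkholder--Davis-type bound for martingale differences, so it needs no independence and would also cover, e.g., momentum-type averages of past noise. However, it rests on an external lemma.

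One cleanup: your opening reduction announces the target $2^{2-p}b\sigma^p$ and then computes with a constant $4$. Neither matches what your symmetrization actually proves. Just state that $\mathbb{E}\|\sum_i\mathbf{v}_i\|^p\le 2^p b\sigma^p$ suffices, since $(2^p)^{1/p}=2\le 2\sqrt{2}$ uniformly in $p$.
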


\begin{proof}
Recall that the stochastic gradient evaluation in the algorithm is 
\begin{align*}
    \vg_t = \frac{1}{b}\sum_{i=1}^b  \vg(\vx_t;Z_{t,i}),
    \qquad\text{where}~~ Z_{t,1},\dots,Z_{t,b}\overset{\mathrm{i.i.d}}\sim \fZ.
\end{align*}
Hence, it holds
\begin{align}\label{eq:exp-dk}
\begin{split}    
     &   \mathbb{E}\left[\Norm{\vg_t-\nabla f(\vx_t)}\right] \\
     = & \mathbb{E}\left[\Norm{\sum_{i=1}^{b} \frac{1}{b}\left(\vg(\vx_t;Z_{t,i})-\nabla f(\vx_t)\right)}\right]  \\
    \leq & 2 \sqrt{2} \mathbb{E}\left [\left(\frac{1}{b^{p}} \sum_{i=1}^{b}\Norm{\vg(\vx_t;Z_{t,i})-\nabla f(\vx_i)}^{p}\right)^{1 / p}\right] \\
    = &\frac{2 \sqrt{2}}{b} \mathbb{E}\left[\left(\sum_{i=1}^{b}\Norm{\vg(\vx_t;Z_{t,i})-\nabla f(\vx_t)}^{p}\right)^{1 / p}\right]\\
\leq & \frac{2 \sqrt{2}}{b} \left(\mathbb{E}\left[\sum_{i=1}^{b}\Norm{\vg(\vx_t;Z_{t,i})-\nabla f(\vx_t)}^{p}\right]\right)^{1 / p}\\
   \leq  & \frac{2 \sqrt{2}}{b} \cdot b^{1 / p} \sigma= \frac{2 \sqrt{2}\sigma}{b^{1-1 / p}},
\end{split}
\end{align}
where the first inequality is based on Lemma 4.3 of  \citet{liu2025nonconvex},
second inequality is based on Jensen's inequality, and the last inequality is based on $p$-BCM assumption.
\end{proof}

The update of the algorithm implies
\begin{align}\label{eq:descent-f-x}
\begin{split}    
f(\vx_{t+1})
&\le f(\vx_t)-\eta\langle \nabla f(\vx_t),\varphi_\gamma(\vg_t)\rangle
+\frac{L_0+L_1\|\nabla f(\vx_t)\|}{2} \cdot \|\vx_{t+1}-\vx_t\|^2\\
&\le f(\vx_t)-\eta(\|\nabla f(\vx_t)\| - \gamma - 2\|\vg_t-\nabla f(\vx_t)\|) +\frac{\eta^2(L_0+L_1\|\nabla f(\vx_t)\|)}{2} \\ 
&= f(\vx_t)
-\eta\left(1-\frac{\eta L_1}{2}\right)\|\nabla f(\vx_t)\|
+2\eta\|\vg_t-\nabla f(\vx_t)\|
+\eta\gamma+\frac{\eta^2L_0}{2},
\end{split}
\end{align}
where the first step is based on Lemma \ref{lem:sym-smooth}, the second step is based on Lemma \ref{lem:normalize} with $\vg=\vg_t$ and~$\vh=\nabla f(\vx_t)$ and the fact $\eta L_1 \leq 1$ from the setting of $\eta=\epsilon/(4(L_0+L_1\epsilon))$. 

Taking the expectation on \eqref{eq:descent-f-x}, we have
\begin{align*}
\mathbb E[f(\vx_{t+1})\mid\mathcal F_t]
\le f(\vx_t)
-\eta\left(1-\frac{\eta L_1}{2}\right)\|\nabla f(\vx_t)\|
+\eta\left(\frac{4\sqrt{2}\sigma}{b^{1-1 / p}}+\gamma+\frac{\eta L_0}{2}\right).
\end{align*}
Taking the average on above result over $t=0,\dots,T-1$ and taking the total expectation, we achieve
\begin{align}\label{eq:exp-grad}    
\mathbb E\left[\|\nabla f(\widehat \vx)\|\right]
\le
\frac{
\dfrac{\Delta}{\eta T}
+\gamma
+\dfrac{4\sqrt2\,\sigma}{b^{(p-1)/p}}
+\dfrac{\eta L_0}{2}
}{
1-\dfrac{\eta L_1}{2}
}.
\end{align}
Note that the parameter settings (\ref{eq:nsgd-parameter}) indicates
\begin{align*}
\frac{\Delta}{\eta T}\le\frac{\epsilon}{4},
\qquad
\gamma \leq \frac{\epsilon}{4},
\qquad
\dfrac{4\sqrt2\,\sigma}{b^{(p-1)/p}}\le\frac{\epsilon}{8},
\qquad
\frac{\eta L_0}{2}\le\frac{\epsilon}{8},
\qquad
\text{and}
\qquad
\eta L_1\le\frac14.
\end{align*}
Combing above inequalities with \eqref{eq:exp-grad}, we obtain $\mathbb E\left[\|\nabla f(\widehat \vx)\|\right] \leq \epsilon$.

\end{document}